\theoremstyle{plain}
\newtheorem*{thm*}{Theorem}
\newtheorem*{cor*}{Corollary}
\newtheorem{thm}{Theorem}
\newtheorem{lemma}{Lemma}[section]
\newtheorem{prop}[lemma]{Proposition}
\newtheorem{claim}[lemma]{Claim}
\newtheorem{cor}[lemma]{Corollary}
\newtheorem{theorem}[lemma]{Theorem}
\newtheorem{question}{Question}
\theoremstyle{definition}
\newtheorem{defn}[lemma]{Definition}
\newtheorem{rem}[lemma]{Remark}
\def\supp{\text{supp}}
\def\F{\mathcal{F}}
\def\Cont{\mathrm{Cont}}
\def\Crit{\mathrm{Crit}}
\title{\textbf{$C^0$-Contact Geometry of Surfaces in 3--Manifolds}}
\author{\textbf{Baptiste Serraille, Maksim Stoki\'c}}
\date{}
\begin{document}

\maketitle

\begin{abstract}
    We prove that contact homeomorphisms preserve characteristic foliations on surfaces in contact $3$--manifolds. More precisely, since the characteristic foliation is a singular $1$–dimensional foliation, we show that singular points are mapped to singular points, and that the image of every $1$–dimensional leaf is again a $1$–dimensional leaf in the image surface. As a consequence, regular coisotropic surfaces are $C^0$–rigid. In contrast, we show that contact convexity is $C^0$–flexible by constructing a contact homeomorphism that sends a convex $2$–torus to a non-convex one.
\end{abstract}

\tableofcontents

\section{Introduction}

The \textit{characteristic foliation} $\mathcal{F}_{\Sigma}$ of a surface $\Sigma$ in a contact $3$-manifold $(Y,\xi)$ is the \textit{singular} $1$-dimensional foliation of $\Sigma$ generated by the singular line field $T\Sigma \cap \xi$. The set of critical points $\mathrm{Crit}(\mathcal{F}_{\Sigma})$ consists of those $p \in \Sigma$ where $T_p\Sigma = \xi_p$. If, in addition, we assume that:
\begin{enumerate}[label=(\roman*)]
    \item the contact structure $\xi$ is cooriented, i.e.\ $\xi = \ker \alpha$ for some contact form $\alpha$,
    \item $\Sigma$ is an oriented surface equipped with an area form $\Omega$,
\end{enumerate}
then one can define an \textit{oriented characteristic foliation} as the family of integral curves of a vector field $X$ on $\Sigma$ determined by
\[
\iota_X \Omega = \alpha|_{\Sigma}.
\]

\noindent The notion of characteristic foliation has proved to be a powerful tool in the study of contact $3$--manifolds. For instance, Giroux showed (see \cite{Gi91}) that any diffeomorphism $\phi:\Sigma_1 \to \Sigma_2$ which preserves oriented characteristic foliations, i.e.\ $\phi_*(\mathcal{F}_{\Sigma_1}) = \mathcal{F}_{\Sigma_2}$, extends to a contactomorphism between neighbourhoods, $\Phi:Op(\Sigma_1) \to Op(\Sigma_2)$. Characteristic foliations also play a central role in the classification of contact structures on $3$--manifolds. Our first result establishes that characteristic foliations are $C^0$-rigid under contact homeomorphisms.  

A \textit{contact homeomorphism} $\varphi:Y \to Y$ is a homeomorphism that arises as the $C^0$-limit of a sequence of contact diffeomorphisms $\varphi_i \in \mathrm{Cont}(Y,\xi)$. A contact analogue of the celebrated Gromov--Eliashberg theorem (see \cite{MS14}) asserts that every smooth contact homeomorphism preserves the contact structure. Our first main theorem shows the following rigidity property.

\begin{thm}\label{Thm_Foliation_Rigidity}
    Let $(Y,\xi)$ be a contact $3-$manifold, and $\Sigma \subset Y$ a smooth surface. 
    Assume $\phi:Y\to Y$ is a contact homeomorphism such that the image $\phi(\Sigma)$ is smooth. Then $\phi$ preserves the characteristic foliation set-wise 
    \[\phi_*(\F_{\Sigma}) = \F_{\phi(\Sigma)}.\]
    In particular:
    \begin{itemize}
        \item[-] Singular points are preserved: $\phi(\mathrm{Crit}(\mathcal{F}_{\Sigma})) = \mathrm{Crit}(\mathcal{F}_{\phi(\Sigma)})$.
        \item[-] For every regular leaf $\mathcal{L} \in \mathcal{F}_{\Sigma}$, the image $\phi(\mathcal{L})$ is a characteristic leaf in $\mathcal{F}_{\phi(\Sigma)}$.
\end{itemize}
\end{thm}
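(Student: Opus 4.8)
The plan is to deduce the theorem from $C^0$-rigidity of characteristic foliations of coisotropic submanifolds in the symplectic category. Work locally near a point $p\in\Sigma$ inside a contact Darboux ball $B$ with a contact form $\alpha$ for $\xi$, and form the symplectization $(\mathbb{R}\times B,\ \omega=d(e^t\alpha))$. For any smooth surface $S\subset B$ the hypersurface $\widehat{S}:=\mathbb{R}\times S$ is automatically coisotropic in $(\mathbb{R}\times B,\omega)$, and a direct computation identifies its symplectic characteristic foliation with the characteristic foliation of $S$: over a \emph{regular} point of $\F_S$ the symplectic characteristic leaf is a graph $\{(t_0+h(q),q):q\in\mathcal{L}\}$ over the corresponding leaf $\mathcal{L}\subset S$, while over a \emph{singular} point $p_0$ of $\F_S$ it is the vertical line $\mathbb{R}\times\{p_0\}$. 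Consequently, if we can lift $\phi$ to a symplectic homeomorphism $\widehat{\phi}$ of $\mathbb{R}\times B$ of skew-product form $\widehat{\phi}(t,y)=(t-g(y),\phi(y))$ with $\widehat{\phi}(\widehat{\Sigma})=\widehat{\phi(\Sigma)}$, then applying $C^0$-coisotropic rigidity upstairs and projecting back down proves the theorem: $\widehat{\phi}$ carries characteristic leaves of $\widehat{\Sigma}$ onto those of $\widehat{\phi(\Sigma)}$, and being a skew product it carries vertical lines to vertical lines and graph-leaves to graph-leaves, so $\phi$ carries $\mathrm{Crit}(\F_\Sigma)$ onto $\mathrm{Crit}(\F_{\phi(\Sigma)})$ and each regular leaf of $\F_\Sigma$ onto a regular leaf of $\F_{\phi(\Sigma)}$.

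\textbf{Constructing the lift and invoking rigidity.} Each approximating contactomorphism $\varphi_i\in\mathrm{Cont}(Y,\xi)$ satisfies $\varphi_i^*\alpha=e^{g_i}\alpha$ for a smooth function $g_i$, and lifts to the $\mathbb{R}$-equivariant symplectomorphism $\widehat{\varphi_i}(t,y)=(t-g_i(y),\varphi_i(y))$, which satisfies $\widehat{\varphi_i}(\widehat{\Sigma})=\widehat{\varphi_i(\Sigma)}$. Provided the conformal factors $g_i$ converge locally uniformly, the sequence $\widehat{\varphi_i}$ converges in $C^0$ to a symplectic homeomorphism $\widehat{\phi}$ of the required skew-product form with $\widehat{\phi}(\widehat{\Sigma})=\widehat{\phi(\Sigma)}$. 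One then applies $C^0$-rigidity of characteristic foliations of coisotropic submanifolds --- the theorem of Humili\`ere, Leclercq and Seyfaddini in the symplectic setting, or a contact-geometric analogue --- to conclude that $\widehat{\phi}$ preserves characteristic foliations, and the projection argument of the previous paragraph finishes the proof.

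\textbf{Main obstacle.} The delicate point is the hypothesis that the conformal factors $g_i$ converge: uniform ($C^0$) convergence $\varphi_i\to\phi$ does not control $g_i$, which is manufactured from the first derivatives of $\varphi_i$, so a priori the lifts $\widehat{\varphi_i}$ need not converge at all. I would handle this either by a normalization-and-compactness argument extracting a locally uniformly convergent subsequence of $(g_i)$ --- which suffices, the conclusion concerning only the fixed data $\phi$ and $\Sigma$ --- or, more robustly, by bypassing the symplectization and proving the needed rigidity intrinsically in the contact category: one characterizes the characteristic leaf of $\F_\Sigma$ through $p$ by means of contact Hamiltonian isotopies whose flow preserves $\Sigma$, shows that such isotopies displace $p$ only within a controlled neighbourhood of its leaf, and checks that this property is closed under $C^0$-limits using a contact energy--capacity inequality together with the contact analogue of the Gromov--Eliashberg theorem referred to in the introduction. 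I expect this quantitative rigidity estimate for $\Sigma$-preserving contact isotopies to be the technical heart of the argument.
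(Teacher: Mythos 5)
Your strategy of passing to the symplectization and invoking $C^0$-coisotropic rigidity does not prove the stated theorem, and you have in fact put your finger on exactly why it fails: the conformal factors $g_i$ determined by $\varphi_i^*\alpha = e^{g_i}\alpha$ are built from first derivatives of $\varphi_i$ and are not controlled by the $C^0$-convergence $\varphi_i\to\phi$. Your proposed fix --- extracting a locally uniformly convergent subsequence of $(g_i)$ by ``normalization-and-compactness'' --- has no basis: there is no a priori bound on $g_i$ in any topology, and it is easy to manufacture sequences of contactomorphisms that converge uniformly while their conformal factors blow up on shrinking sets. If you \emph{do} assume that the $g_i$ converge, then your argument reduces the theorem to the known $C^0$-coisotropic rigidity results cited in the introduction (\cite{RZ20}, \cite{Us21}), which are explicitly stated there to require precisely such an additional assumption on conformal factors. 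The whole point of Theorem~\ref{Thm_Foliation_Rigidity} is to remove that hypothesis, so the symplectization reduction cannot prove it. (A secondary issue: HLS-type rigidity of coisotropic leaves upstairs would, in the best case, only produce statements about the unoriented leaves of the symplectization of $\Sigma$, and one would still need extra work --- which you do not supply --- to see that vertical fibers $\mathbb{R}\times\{p_0\}$ cannot be mapped to graph-type leaves and vice versa under a mere $C^0$-limit of skew products.)

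Your second, fallback suggestion --- an intrinsic contact argument based on ``contact Hamiltonian isotopies preserving $\Sigma$'', an ``energy--capacity inequality'', and the contact Gromov--Eliashberg theorem --- gestures in the direction the paper actually takes, but as written it is too vague to evaluate and, crucially, addresses only the regular part of the foliation. The paper's actual proof splits into two genuinely separate pieces. For regular points it introduces a \emph{contact hammer} (an explicit family of compactly supported contact isotopies whose behaviour detects whether two points lie on the same leaf) and then shows, via Givental's non-displacement of a lift of the equator in the prequantization $\mathbb{RP}^3\to S^2$, that the existence of arbitrarily small hammers between two points forces them to be on the same leaf (Lemma~\ref{Lemma_Hammer_on_single_leaf} and Proposition~\ref{Prop_Contact_Hammer}, feeding into Proposition~\ref{Prop_Regular_Foliation_Part}). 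For singular points the argument is different again: it needs the structure theory of critical sets (Proposition~\ref{Prop_Crit_in_1D_mfld}, the two-leaves Lemma~\ref{lem.two_leaves_lemma}, the local model Lemma~\ref{Lemma_Local_1dim_Crit}), and then distinguishes zero-dimensional singular components (handled via orientations of nearby leaves, Lemma~\ref{Lemma_0D-Crit_to_0D-Crit}) from one-dimensional ones (handled by Reeb-displacing the critical set and again invoking Givental, Lemma~\ref{Lemma_1D-Crit_to_1D-Crit}). None of this structure is visible in your proposal, and the part you do sketch does not get past the conformal-factor obstruction you correctly identified. You should treat that obstruction as fatal to the symplectization route and develop the intrinsic contact-hammer approach, together with a separate analysis of the critical set, as the paper does.
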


\begin{proof}
    The result is an immediate consequence of Propositions~\ref{Prop_Crit_to_Crit} and~\ref{Prop_Regular_Foliation_Part}.
\end{proof}

\noindent The result is local in the sense that we do not assume $\Sigma$ to be closed, nor do we assume the contact structure $\xi$ to be cooriented. If, however, $\Sigma$ is an oriented surface and $\xi = \ker \alpha$ is cooriented, then one can speak of an oriented characteristic foliation on $\Sigma$. This naturally raises the question:

\begin{question}
    Can a contact homeomorphism reverse the orientation of the characteristic foliation?
\end{question}

\noindent The question of $C^0$–coisotropic rigidity has attracted significant attention in recent years. See, for example, \cite{RZ20} and \cite{Us21} for proofs of $C^0$–coisotropic rigidity under additional assumptions on the sequence of conformal factors. Among coisotropic submanifolds, the Legendrian case—corresponding to the minimal dimension—is particularly notable, as it is the only setting where $C^0$–rigidity has been proved without imposing conditions on the conformal factors. $C^0$–Legendrian rigidity was proved in dimension three in \cite{DRS24a}, \cite{St25}, and in higher dimensions in \cite{DRS24b}.

A submanifold $C$ of a contact manifold $(Y,\xi=\mathrm{ker}\,\alpha)$ is called \textit{contact coisotropic} or \textit{regular coisotropic} if
\begin{enumerate}
    \item[(1)] for all $p\in C$ the subspace $T_pC\cap\xi_p\subset\xi_p$ is coisotropic with respect to the conformal symplectic structure on $\xi_p$. In particular,
    \[(T_pC\cap\xi_p)^{\perp_{d\alpha}}\subset T_pC\cap\xi_p,\]
    \item[(2)] the distribution $TC\cap\xi$ has constant rank.
\end{enumerate}

\noindent Some authors omit the constant-rank assumption in the definition, in which case we will call $C$ a \textit{singular coisotropic} submanifold. In contact $3$–manifolds every surface is singular coisotropic, while regular coisotropic surfaces are precisely those satisfying the condition $\mathrm{Crit}(\mathcal{F}_C)=\emptyset$. Hence the above theorem yields:

\begin{cor}
    Let $C$ be a regular coisotropic surface in a contact $3$–manifold $(Y,\xi)$. Suppose $\phi:Y \to Y$ is a contact homeomorphism such that $\phi(C)$ is smooth. Then $\phi(C)$ is a regular coisotropic surface.
\end{cor}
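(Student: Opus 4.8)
The plan is to reduce the corollary entirely to Theorem~\ref{Thm_Foliation_Rigidity}, using the characterization of regular coisotropic surfaces recalled above: in a contact $3$--manifold a surface $\Sigma$ is regular coisotropic if and only if $\mathrm{Crit}(\mathcal{F}_\Sigma)=\emptyset$. Let me first make sure of this equivalence. Every surface in a contact $3$--manifold is singular coisotropic, so condition (1) of the definition is automatic: at a regular point $T_p\Sigma\cap\xi_p$ is a line in the symplectic plane $\xi_p$, hence Lagrangian and in particular coisotropic, and at a critical point it is all of $\xi_p$. The only genuine requirement is thus that $T\Sigma\cap\xi$ have constant rank; since $\xi$ is a true, non-integrable contact distribution this rank equals $1$ on $\Sigma\setminus\mathrm{Crit}(\mathcal{F}_\Sigma)$ and $2$ exactly on $\mathrm{Crit}(\mathcal{F}_\Sigma)$, so constant rank is equivalent to $\mathrm{Crit}(\mathcal{F}_\Sigma)=\emptyset$ (here "constant rank" is read on each connected component, which costs nothing since no surface can be everywhere tangent to $\xi$).

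With this in hand the argument is short. First I would note that $\phi(C)$ is a smooth surface: it is the homeomorphic image of $C$ and is assumed smooth, so its characteristic foliation $\mathcal{F}_{\phi(C)}$ is defined and the characterization above applies to it. Then, since $C$ is regular coisotropic, $\mathrm{Crit}(\mathcal{F}_C)=\emptyset$; applying Theorem~\ref{Thm_Foliation_Rigidity} to $\phi$ and $\Sigma=C$ gives
\[
\mathrm{Crit}(\mathcal{F}_{\phi(C)})=\phi\big(\mathrm{Crit}(\mathcal{F}_C)\big)=\phi(\emptyset)=\emptyset,
\]
whence $\phi(C)$ is regular coisotropic, as claimed.

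The main --- and essentially the only --- obstacle is Theorem~\ref{Thm_Foliation_Rigidity} itself, or more precisely Proposition~\ref{Prop_Crit_to_Crit} (preservation of the singular set), where all of the $C^0$--rigidity content resides; once that is granted the corollary is purely formal. It is worth recording that only the inclusion $\mathrm{Crit}(\mathcal{F}_{\phi(C)})\subseteq\phi(\mathrm{Crit}(\mathcal{F}_C))$ is actually used here, i.e.\ that $\phi$ does not create new singular points on the smooth surface $\phi(C)$; the reverse inclusion --- that $\phi^{-1}$ does not either --- is not needed for this statement but of course follows by applying the same result to $\phi^{-1}$.
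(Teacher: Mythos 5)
Your proposal is correct and matches the paper's (implicit) argument exactly: the paper records the characterization that in a contact $3$--manifold a surface is regular coisotropic precisely when its characteristic foliation has empty critical set, and then the corollary follows immediately from the critical-set preservation in Theorem~\ref{Thm_Foliation_Rigidity}. Your added justification of that characterization (condition (1) is automatic; condition (2) forces $\Crit(\F_\Sigma)=\emptyset$ since no surface is everywhere tangent to a contact plane field) is correct and simply fills in a detail the paper leaves to the reader.
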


\noindent As we shall see, the proof of Theorem \ref{Thm_Foliation_Rigidity} consists of two main steps:
\begin{enumerate}
    \item After removing all points in the critical sets ($\mathrm{Crit}(\mathcal{F}_{\Sigma})$ and $\phi^*\mathrm{Crit}(\mathcal{F}_{\phi(\Sigma)})$), the remaining regular leaves are mapped to each other by $\phi$. The key tool is the \emph{contact hammer}, a contact analogue of Opshtein's symplectic hammer \cite{Op09}, which, together with Givental's non-displacement result for a Clifford torus in $\mathbb{RP}^3$, provides a $C^0$–characterization of regular leaves.
    
    \item Every critical point is mapped to a critical point. This requires analysing the foliation near singularities and using different techniques depending on the type: in some cases using the local orientation of leaves, and in others, applying Givental's non-displacement result.
\end{enumerate}

In \cite{Gi91}, Giroux introduced the theory of convex surfaces in contact $3$--manifolds. A surface $\Sigma$ in a contact $3$--manifold is said to be \textit{convex} if there exists a contact vector field transverse to it. For such surfaces, one can define a \textit{dividing set}, a much simpler invariant than the characteristic foliation, which plays a central role in the classification of contact structures on $3$--manifolds. Our next result shows that contact convexity is $C^0$-flexible.

\begin{thm}\label{thm.flexbility}
Let $(Y,\xi)$ be a contact $3$–manifold. There exists a convex torus $T \subset Y$ and a contact homeomorphism $\phi:Y\to Y$ such that $\phi(T)$ is a smooth, non-convex torus.
\end{thm}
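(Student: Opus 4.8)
The plan is to find, inside a model neighbourhood of a convex torus, a convex torus $T$ and a contact homeomorphism $\phi$ that degenerates the $1$–jet of the holonomy along one of the closed leaves of $\mathcal{F}_{T}$, turning $T$ into a smooth torus $T'=\phi(T)$ whose characteristic foliation is \emph{homeomorphic} to $\mathcal{F}_{T}$ (as it must be, by Theorem~\ref{Thm_Foliation_Rigidity}) but is no longer a divided foliation — hence $T'$ is not convex.

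The first step is to localise. Every contact $3$–manifold contains a Darboux ball, and inside it one finds a region $W\cong(T^{2}\times(-1,1),\alpha_{0})$ contactomorphic to the standard neighbourhood of a convex torus (for instance the boundary of a standard tubular neighbourhood of a Legendrian unknot). It therefore suffices to build $T\subset W$ and a contact homeomorphism of $W$, supported away from $\partial W$ and extended by the identity to $Y$, doing the job. Inside $W$ I would fix a convex torus $T$ in Giroux standard form whose characteristic foliation has a closed leaf $\gamma$ that is a hyperbolic limit cycle: in an adapted contact chart around $\gamma$ the first–return map of $\mathcal{F}_{T}$ along a transversal is $x\mapsto\lambda x$ for some $\lambda>1$, and $\mathcal{F}_{T}$ spirals onto $\gamma$ from both sides.

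Next I would fix the target germ. The germ $x\mapsto\lambda x$ is topologically conjugate (on each side of $0$, compatibly across $\gamma$) to the germ $h(x)=x+x^{3}$ — more generally to any homeomorphism germ that is topologically expanding on both sides but whose derivative at $0$ equals $1$ — because both define free, properly discontinuous $\mathbb{Z}$–actions on a half–neighbourhood with circle quotient, and the conjugacy extends continuously across the fixed point. Consequently a smooth torus carrying the foliation obtained from $\mathcal{F}_{T}$ by this local replacement has characteristic foliation homeomorphic to $\mathcal{F}_{T}$, in agreement with Theorem~\ref{Thm_Foliation_Rigidity}; but it is not convex. Indeed, by Giroux's criterion a convex torus has a \emph{divided} characteristic foliation, and a closed leaf disjoint from the dividing set of a divided foliation is necessarily a hyperbolic limit cycle, since the divergence of the characteristic vector field keeps a constant sign along it; for the foliation at hand every candidate dividing multicurve is disjoint from $\gamma$, so a closed leaf with trivial holonomy derivative cannot occur. (Since $T'$ will be produced as $\phi(T)$, its smoothness is built into the construction, so no realisation theorem for characteristic foliations is invoked; a careful verification of Giroux's criterion for this specific $\mathcal{F}_{T'}$ is nonetheless one of the points to be checked.)

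The heart of the argument is the construction of $\phi$. I would produce a sequence $\phi_{i}\in\mathrm{Cont}(W,\xi)$, each supported in an arbitrarily small neighbourhood of $\gamma$, with $\phi_{i}\to\phi$ in $C^{0}$ and $\phi(T)=T'$. Each $\phi_{i}$ is a composition of contact ``renormalisations'': on the $i$–th fundamental annulus of the holonomy flow — a sequence of annuli shrinking onto $\gamma$ — $\phi_{i}$ pushes the first–return map one step closer to $h(x)=x+x^{3}$, while the part of $W$ still closer to $\gamma$ is left for $\phi_{i+1},\phi_{i+2},\dots$ to correct. One then checks: each $\phi_{i}$ is a genuine contactomorphism; the $C^{0}$–sizes of the successive corrections are summable, so $\phi=\lim_{i}\phi_{i}$ exists uniformly and is, by definition, a contact homeomorphism; and $\phi$ carries the smooth torus $T$ onto the smooth torus $T'$. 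Note that $\phi_{i}(T)$ is smoothly convex for every $i$ — its holonomy at $\phi_{i}(\gamma)$ is still smoothly conjugate to $x\mapsto\lambda x$ — so the construction exhibits a sequence of convex tori $C^{0}$–converging to a non-convex one, which is exactly the desired flexibility. The main obstacle is this last step: arranging infinitely many contact renormalisations near $\gamma$ — each constrained to preserve the contact condition — so that their composition stabilises \emph{uniformly on all of} $W$ and sends one \emph{smooth} torus to another \emph{smooth} torus rather than to a merely $C^{0}$ object; this requires uniform (not just germ–level) contact estimates on the renormalisation maps together with a careful choice of the shrinking annuli and of how $T$ threads through them.
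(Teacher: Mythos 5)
Your plan — degrade the first--return map of a closed leaf of the characteristic foliation from hyperbolic to derivative--$1$, and realise the degradation as a $C^{0}$--limit of contactomorphisms — is exactly the paper's plan, and your use of Giroux's closed--orbit criterion to certify non--convexity of the target torus is correct in spirit. The setup and the target are right, and your observation that each approximating $\phi_{i}(T)$ remains convex while the limit is not is also the point of the construction.

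The genuine gap is the $C^{0}$ estimate, and the mechanism you hint at would not supply it. You propose to obtain smallness by supporting each renormalisation $\phi_{i}$ ``in an arbitrarily small neighbourhood of $\gamma$'' and to extract the bound from ``a careful choice of the shrinking annuli.'' But a shrinking tubular neighbourhood of the closed leaf $\gamma$ does not have small diameter: it collapses in the normal directions while retaining the full length of $\gamma$, so a contactomorphism supported there can a priori move points by a distance comparable to the length of $\gamma$, and an infinite composition of such maps has no reason to be $C^{0}$--Cauchy. The paper instead breaks each corrective isotopy into \emph{two} stages, each supported not in an annular tube but in a disjoint union of balls of small diameter scattered along $\gamma$ (the $\varepsilon$--PSI device of Definition~\ref{epsilonPSI} and Proposition~\ref{PropositionPSI}); this exploits the fact that the nearby leaves spiral many times around $\gamma$, so each leaf can be ``nudged'' to its target position by pushes localised in small rectangles rather than in a full collar of $\gamma$. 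Concretely this is carried out on graph tori $S_{f}$ for an explicit one--parameter family of profiles interpolating $f(y)=-y$ with $f_{\infty}(y)=-y^{3}$ near $y=0$ (Proposition~\ref{MainProposition}), so that the displacement of each partial isotopy is quantitatively $O(\sqrt{\varepsilon_{i}})$ with $\sum\sqrt{\varepsilon_{i}}<\infty$. Without a decomposition of this type — supports of small diameter, not just thin annuli — the infinite composition does not converge in $C^{0}$, and the proof does not close.
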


\begin{proof}
    The proof follows from Theorem \ref{Thm_Convexity_Flexibility}.
\end{proof}

\noindent Our strategy is to show that, in the $C^0$–limit, a property of convex surfaces may fail. For a smooth convex surface, every closed leaf of the characteristic foliation is \textit{non-degenerate}: the associated \textit{Poincaré return map}, which measures how nearby leaves wind around a periodic orbit, is non-degenerate. The proof idea is to construct a $C^0$–example where this non-degeneracy condition breaks down. Although our construction is carried out on a torus, the same argument applies in a neighbourhood of any closed leaf on an arbitrary surface.

\subsection*{Acknowledgments}

\noindent The authors warmly thank Lev Buhovsky for invaluable discussions regarding this project. The authors also thank Georgios Dimitroglou-Rizell, Du\v{s}an Joksimovi\'c, Álvaro del Pino Gómez, and Sobhan Seyfaddini for their interest in this project. M.S. conducted part of this work during a visit to Zürich, supported by the FIM Institute, and is supported by Uppsala University. B.S. was supported partially by ERC Starting Grant 851701.

\section{Characteristic foliations on surfaces}

In this section, we recall the definition and give some properties of characteristic foliations of hypersurfaces inside 3-dimensional contact manifolds. In Section \ref{subsec.sets_that_can_arise}, we characterize the sets that can arise as singular sets of a characteristic foliation. In Section \ref{subsec.local_structure_near_sing}, we give results concerning the local structure near singularities.

\medskip

\noindent Let $(Y^3,\xi=\ker\alpha)$ be a co-oriented contact $3$--manifold, and let $\Sigma \subset Y$ be an oriented embedded surface. The \textit{characteristic foliation} $\F_\Sigma$ on $\Sigma$ is the (possibly singular) line field defined by
\[
T_p\F_\Sigma := \xi_p \cap T_p\Sigma
\qquad\text{for all } p \in \Sigma.
\]
This defines a smooth singular foliation on $\Sigma$ whose \textit{singular set} is
\[
\Crit(\F_\Sigma)
:= \{\, p \in \Sigma\mid \xi_p = T_p\Sigma \,\}.
\]

\noindent Let $\omega_\Sigma$ be the orientation $2$-form on $\Sigma$. Then $\F_\Sigma$ is oriented by the unique vector field $X$ (defined up to positive scalar) satisfying:
\[
\iota_X \omega_\Sigma =\alpha|_{\Sigma}.
\]
Equivalently, $X$ is the unique line field tangent to $\ker\alpha$ such that $d\alpha(X,n)>0$ for any positively oriented pair $(X,n)$ in $T_p\Sigma$ with $\alpha(n)>0$.

\subsection{Sets that can arise as the critical set of a characteristic foliation}\label{subsec.sets_that_can_arise}

In this section, we show that critical sets of characteristic foliations are included in 1-dimensional smooth submanifolds. Moreover, for each closed sets of the closed interval, we construct a characteristic foliation which has such set as critical set. This last result does not appear in the literature as far as we know.

\begin{prop}\label{Prop_Crit_in_1D_mfld}
Let $(Y, \xi)$ be a contact manifold and $\Sigma \subset Y$ be a smooth hypersurface, then the critical set of the characteristic foliation, $\Crit (\mathcal F_\Sigma)$, is closed and contained in a disjoint union of smooth 1-dimensional submanifolds of $\Sigma$.
\end{prop}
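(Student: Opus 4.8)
The plan is to reduce the statement to the observation that the contact condition forces $d(\alpha|_\Sigma)$ to be non-degenerate along the critical set, and then to run a short rank dichotomy at each critical point.

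\emph{Closedness and the key non-degeneracy.} We may work locally, choosing a contact form $\alpha$ for $\xi$ on a neighbourhood in $Y$; this exists whether or not $\xi$ is globally coorientable, and $\Crit(\F_\Sigma)$ does not depend on the choice. Put $\beta := \alpha|_\Sigma$, a $1$-form on the relevant piece of $\Sigma$. Since $\ker\beta_p = \xi_p\cap T_p\Sigma = T_p\F_\Sigma$ when $\beta_p\neq 0$, and $\beta_p = 0$ exactly when $T_p\Sigma = \xi_p$, the set $\Crit(\F_\Sigma)$ is the zero locus of the continuous section $p\mapsto \alpha_p|_{T_p\Sigma}$ of $T^*\Sigma$, hence closed. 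The crucial point is that at $p\in\Crit(\F_\Sigma)$ one has $d\beta_p = (d\alpha)|_{T_p\Sigma} = (d\alpha)|_{\xi_p}$, which is non-degenerate by the contact condition; by continuity $d\beta$ is therefore an area form on an open neighbourhood $W$ of $\Crit(\F_\Sigma)$ in $\Sigma$.

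\emph{Local dichotomy.} Fix $p\in\Crit(\F_\Sigma)$ and local coordinates $(x,y)$ on $\Sigma$ centred at $p$ inside $W$, and write $\beta = u\,dx + v\,dy$, so $G:=(u,v)$ vanishes precisely on $\Crit(\F_\Sigma)$ near $p$, while $d\beta = (v_x-u_y)\,dx\wedge dy$ is nowhere zero. Since $(v_x-u_y)(p)\neq 0$, the Jacobian $DG(p)$ is nonzero, so $\operatorname{rank} DG(p)\in\{1,2\}$. If the rank is $2$, then $G$ is a local diffeomorphism and $p$ is isolated in $\Crit(\F_\Sigma)$. If the rank is $1$, then one of $du(p),dv(p)$ is nonzero, say $dv(p)\neq 0$, so $\{v=0\}$ is an embedded smooth arc through $p$ and $\Crit(\F_\Sigma)$ agrees near $p$ with a closed subset of $\{u=0\}\cap\{v=0\}\subset\{v=0\}$. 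Either way, $\Crit(\F_\Sigma)$ is contained, near each of its points, in a smooth $1$-dimensional submanifold of $\Sigma$.

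\emph{Globalization.} Write $\Crit(\F_\Sigma) = E\sqcup\Crit'$, where $E$ is the (discrete, closed) set of isolated points and $\Crit'$ the set of accumulation points; by the previous paragraph every point of $\Crit'$ is of rank-$1$ type. Around each point of $E$ choose pairwise disjoint small arcs, disjoint from $\overline{\Crit'}$. Cover $\Crit'$ by a locally finite family of arc-neighbourhoods $(U_i,\gamma_i)$ with $\Crit(\F_\Sigma)\cap U_i\subset\gamma_i$, and assemble the $\gamma_i$ one at a time. The mechanism that makes this work is that two smooth arcs each containing $\Crit(\F_\Sigma)$ near a common point $r\in\Crit'$ must be tangent at $r$: a transversal crossing of two smooth arcs is an isolated point of their intersection, whereas near $r$ both arcs contain the non-isolated set $\Crit(\F_\Sigma)$. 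Hence every transversal intersection among the $\gamma_i$ lies away from $\Crit'$ and can be removed by trimming the arcs without deleting any point of $\Crit(\F_\Sigma)$, and a further adjustment inside the complementary gaps of $\Crit(\F_\Sigma)$ along each arc yields the desired disjoint union of smooth $1$-submanifolds. I expect this last assembly step to be the most delicate part: the conceptual core (closedness, and $\operatorname{rank} DG(p)\ge 1$ from non-degeneracy of $d\beta$) is very short, but turning the local arcs into one honest embedded $1$-submanifold — bearing in mind that $\Crit(\F_\Sigma)$ itself need not be a submanifold, since it may be a Cantor-type set inside an arc — requires the bookkeeping sketched above.
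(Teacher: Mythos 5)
Your core argument is the same as the paper's: closedness follows because $\Crit(\F_\Sigma)$ is the zero locus of the $1$-form $\beta=\alpha|_\Sigma$, and the contact condition forces $d\beta_p=(d\alpha)|_{\xi_p}\neq 0$ at each critical point $p$, which in local coordinates $(x,y)$ with $\beta=u\,dx+v\,dy$ means $\operatorname{rank}D(u,v)(p)\geq 1$, so at least one of $u,v$ is a local submersion near $p$ and $\Crit(\F_\Sigma)$ is locally contained in its zero curve. The explicit rank-$1$/rank-$2$ split you introduce is harmless but redundant — in both cases $\Crit(\F_\Sigma)$ lies in the zero set of whichever component function has nonzero differential, which is what the paper observes directly.

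Where you go further is the globalization step, which the paper omits (its proof is purely local). Be careful there: your key mechanism — that two arcs each containing $\Crit(\F_\Sigma)$ near a common accumulation point $r$ must be \emph{tangent} at $r$ — is correct, but tangency at $r$ does not give disjointness of the arcs near $r$. Two arcs tangent at $r$ can still cross transversally along a sequence of points $q_n\to r$, and if $\Crit(\F_\Sigma)$ accumulates exactly along such a sequence (which is consistent with everything established so far), these crossings cannot be trimmed away without removing points of $\Crit(\F_\Sigma)$. One would instead have to merge the overlapping arcs into a single smooth curve near $r$, a further nontrivial step you do not spell out. That said, since the paper's proof stops at the local statement, this is an honest extra difficulty you surfaced rather than a defect relative to the paper; and in the body of the paper the proposition is only ever invoked in its local form (e.g.\ in the proof of Lemma~\ref{Lemma_0D-Crit_to_0D-Crit}), so the local argument is what is actually used.
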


\begin{proof}
Let $p$ be a critical point of the characteristic foliation of $\Sigma$ and consider an open subset $U$ of $p$ in $\Sigma$ and identify it with a portion of the $xy$-plane of $\mathbb R^3$. Then $\beta =\alpha\lvert_\Sigma$ can be written as $\beta_x dx+ \beta_y dy$ with $\beta_x$ and $\beta_y$ smooth functions. The critical set is the intersection set of the zero set of $\beta_x$ and $\beta_y$, it is then closed from the continuity of $\beta_x$ and $\beta_y$.

Since $\beta_p=0$ and $\alpha$ is non-degenerate, we have that $d\beta_p =(\partial_x \beta_y - \partial_y \beta_x) dx \wedge dy \neq 0$. This means that $\partial_x \beta_y \neq\partial_y \beta_x$, in particular they cannot be both 0, e.g. $\partial_y \beta_x\neq 0$. The function $\beta_x \colon U \to \mathbb R$ is a smooth submersion at 0 locally around $p$ hence its zero set is a 1-dimensional smooth manifold.
\end{proof}

\noindent The converse is also true.

\begin{prop}
Let $F \subset I=(a,b)$ with $a,b \in \mathbb R$ be a compact subset of $\mathbb R$. Then there exist a contact manifold $(Y, \xi)$, a smooth hypersurface $\Sigma \subset Y$ as well as a smooth embedding $\iota\colon I \hookrightarrow \Sigma$ such that
\[\Crit(\mathcal F_\Sigma)= \iota(F).\]
\end{prop}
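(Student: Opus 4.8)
The plan is to build an explicit local model on $\mathbb{R}^3$ with its standard contact structure, realizing $\Sigma$ as a graph over the $xy$-plane whose characteristic foliation degenerates exactly along the prescribed set. By Proposition~\ref{Prop_Crit_in_1D_mfld} we already know that $\Crit(\mathcal F_\Sigma)$ must sit inside a $1$-dimensional submanifold, so the natural ansatz is to arrange the critical set to lie on a single coordinate line, say $\{y=0\}$, and to prescribe which points of that line are critical. Concretely, I would take $(Y,\xi) = (\mathbb{R}^3_{x,y,z}, \ker\alpha)$ with $\alpha = dz - y\,dx$ (or $\alpha = dz + x\,dy$, whichever makes the computation cleanest), and look for $\Sigma$ as the graph $z = f(x,y)$ of a smooth function $f$. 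Pulling back $\alpha$ gives $\beta = \alpha|_\Sigma = (f_x - y)\,dx + f_y\,dy$ in the $(x,y)$-coordinates on $\Sigma$, and a point is critical precisely when $f_x(x,y) = y$ and $f_y(x,y) = 0$ simultaneously.

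The key step is then to choose $f$ so that the common zero set of $(x,y)\mapsto f_x(x,y) - y$ and $(x,y)\mapsto f_y(x,y)$, restricted to a neighborhood of $\{y=0\}$, is exactly $F \times \{0\}$ under the embedding $\iota(t) = (t,0)$. I would try the separable form $f(x,y) = g(x)\,\chi(y) + \tfrac{1}{2}y^2\psi(x)$, or something similarly structured, so that $f_y$ vanishes on $\{y=0\}$ automatically (taking $\chi'(0)=0$ and $\psi$ absorbed appropriately), while $f_x(x,0) = y = 0$ becomes the condition $g'(x) = 0$. The problem thus reduces to the classical fact that for any compact $F \subset (a,b)$ there is a smooth function $g$ on $(a,b)$ with $\{g' = 0\} = F$: one writes the complement $(a,b)\setminus F$ as a countable disjoint union of open intervals and builds $g'$ as a sum of bump functions, one per gap, supported in that gap and nonvanishing there, with coefficients chosen small enough (decaying rapidly) that the series and all its term-by-term derivatives converge uniformly on compacta, yielding a smooth $g'$ whose zero set is precisely $F$; then integrate. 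One must also check the non-degeneracy condition $d\beta \neq 0$ at critical points, which for the graph model translates to $\partial_x(f_y) \neq \partial_y(f_x - y)$, i.e.\ $f_{xy} \neq f_{xy} - 1$, automatically satisfied — this is just the contact condition reappearing, so it costs nothing. Finally, $\iota\colon I \hookrightarrow \Sigma$, $t \mapsto (t,0,f(t,0))$, is a smooth embedding of $I=(a,b)$ with $\iota(F) = \Crit(\mathcal F_\Sigma) \cap \iota(I)$; shrinking the $y$-range of the chart (or multiplying $f$ by a cutoff in $y$) ensures no spurious critical points appear away from $\{y=0\}$, since there $f_y \neq 0$ for small $y \neq 0$ provided $f_{yy}(x,0)$ is bounded away from zero, which the quadratic term guarantees.

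The main obstacle I anticipate is bookkeeping rather than conceptual: one must simultaneously (i) realize $F$ as the zero set of $g'$ by the bump-function construction while keeping $g$ genuinely smooth, (ii) verify that on the two-dimensional neighborhood — not just the line $\{y=0\}$ — the only solutions of $f_x = y,\ f_y = 0$ are the desired ones, which requires controlling the $y$-dependence of $f$ carefully, and (iii) keep $\Sigma$ embedded. I would handle (ii) by making the $y$-dependence of $f$ purely quadratic near $y=0$, $f(x,y) = g(x) + \tfrac12 y^2$, so that $f_y = y$, forcing $y=0$ on the critical set immediately, and $f_x = g'(x) = y = 0$ then pins the critical set to $\{(x,0) : g'(x)=0\} = F\times\{0\}$; with this choice there is nothing left to control beyond the one-variable construction of $g$, and $\Sigma = \{z = g(x) + \tfrac12 y^2\}$ is manifestly an embedded graph. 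This reduces the whole proposition to the lemma about smooth functions with prescribed critical set of the derivative, which is standard.
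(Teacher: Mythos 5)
Your proof is correct, and it follows a genuinely different route from the paper. You keep the contact form standard, $\alpha = dz - y\,dx$, and encode the data of $F$ by bending the surface $\Sigma$ as the graph $z = g(x) + \tfrac12 y^2$; the critical equations $f_y = y = 0$ and $f_x - y = g'(x) = 0$ then pin the critical set exactly to $\{y=0,\ g'(x)=0\}$, reducing everything to producing a smooth $g$ whose derivative vanishes precisely on $F$. The paper instead keeps $\Sigma$ flat (the $xy$-plane) and encodes $F$ in the contact form itself, taking $\alpha = y\,dx + g(x)\,dy + dz$ with $\{g=0\} = F$; there the critical set computation is immediate, at the cost of having to verify the contact condition, which forces the auxiliary normalization $g' < 1$. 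Both approaches ultimately rest on the same Whitney-type lemma that any closed subset of $\mathbb{R}$ is the zero set of a smooth function (for you, applied to $g'$ and then integrated). Your version has the minor advantage that the contact condition is automatic; the paper's has the minor advantage that the surface and the restriction $\alpha|_\Sigma$ are simpler to write down. Two small remarks: with your choice $f = g(x) + \tfrac12 y^2$ the critical set is $\{y=0,\ g'(x)=0\}$ on the whole plane, so the precaution of shrinking the $y$-range is unnecessary; and to get the stated equality $\Crit(\mathcal{F}_\Sigma) = \iota(F)$ (not just the intersection with $\iota(I)$), you should extend $g$ to all of $\mathbb{R}$ with $g' \neq 0$ outside a compact neighbourhood of $F$, which your bump-function construction accommodates with no extra work.
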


\begin{proof}
Let $g \colon \mathbb R \to \mathbb R$ be a function such that its zero set is exactly $F$ and $g'(x) <1$ for all $x \in \mathbb R$. The first condition can be built using Whitney theorem for the second one can always assume that $g$ is constant at infinity (from the compactness of $F$) and then replace $g$ by $cg$ for some real number $ c \in \mathbb R^*$ to obtain small enough derivatives.

Then let us consider the 1-form $\alpha = y dx + g(x) dy+ dz$ on $\mathbb R^3$, we have that $d \alpha=dy\wedge  dx + g'(x) dx \wedge dy + 0$. Thus,
\[\alpha \wedge d\alpha=(g'(x)-1)dx \wedge dy \wedge dz \neq 0.\]
This implies, that $\alpha$ is a contact form. We consider now $\Sigma$ as the $xy$-plane, and write $\beta=\alpha\lvert_\Sigma=y dx +g(x) dy$ and $\iota \colon \mathbb R \to \Sigma$ as the natural inclusion of the line $\{y=0\}$ in $\Sigma$. Then 
\[\Crit(\mathcal F_\Sigma)=\{y=0\} \cap \{g(x)=0\}=\iota(F).\]
\end{proof}

\subsection{Local structure near singularities in characteristic foliations}\label{subsec.local_structure_near_sing}

In this section, we study the characteristic foliations near the singularities. In particular, we prove Lemma \ref{lem.two_leaves_lemma} that tells us that each singular point of a characteristic foliation sees at least 2 leaves converging to it. This result will be key in order to prove Proposition \ref{Prop_Crit_to_Crit}. We then give a local model of the characteristic foliation near 1-dimensional sets of singularities.

\begin{lemma}[Two leaves Lemma]\label{lem.two_leaves_lemma}
Let $\Sigma \subset (Y^3, \ker \alpha)$ be an embedded surface, and let 
$p \in \Crit(\mathcal{F}_{\Sigma})$ be a singular point of the characteristic foliation on $\Sigma$.  
Then there exist at least two distinct characteristic leaves of $\Sigma$ that either both converge to $p$ as $t \to +\infty$ or both converge to $p$ as $t \to -\infty$.
\end{lemma}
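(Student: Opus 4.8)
The plan is to linearize the problem: near a singular point $p$, pick coordinates $(x,y)$ on $\Sigma$ centered at $p$ as in the proof of Proposition~\ref{Prop_Crit_in_1D_mfld}, so that $\beta = \alpha|_\Sigma = \beta_x\,dx + \beta_y\,dy$ with $\beta(0)=0$ and the Jacobian of $(\beta_x,\beta_y)$ at $0$ nonsingular. The characteristic foliation is directed by the vector field $X = \beta_y\,\partial_x - \beta_x\,\partial_y$ (this is the field with $\iota_X(dx\wedge dy)=\beta$, up to sign), which vanishes exactly at $p$. Its linearization at $p$ is the linear vector field associated to the matrix $A = \begin{pmatrix} \partial_x\beta_y & \partial_y\beta_y \\ -\partial_x\beta_x & -\partial_y\beta_x \end{pmatrix}$ at $0$; a direct computation shows $\operatorname{tr} A$ and $\det A$ are unconstrained, but crucially $A \neq 0$ since the full Jacobian of $(\beta_x,\beta_y)$ is nonzero (indeed $\partial_x\beta_y - \partial_y\beta_x \neq 0$ from contactness, so $A$ cannot be the zero matrix). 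So $p$ is a singular zero of $X$ whose linear part is a nonzero $2\times 2$ matrix.

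The key step is then a case analysis on the eigenvalues of $A$. If $A$ is hyperbolic (two real eigenvalues of opposite sign, or in general any eigenvalue with nonzero real part), then by the Hartman--Grobman theorem — or more elementarily by constructing stable/unstable manifolds — there are orbits asymptotic to $p$: in the saddle case both a stable and an unstable separatrix (two leaves into $p$ as $t\to+\infty$, or for the other pair as $t\to-\infty$, giving the required pair on at least one side); in the node/focus case an entire punctured neighborhood of $p$ consists of leaves limiting to $p$ on the same side, so certainly two of them. The degenerate cases are where $A$ has eigenvalues $0$ or purely imaginary. If $A$ is nilpotent and nonzero, the orbit structure is that of the linear system $\dot u = v, \dot v = 0$ (up to coordinates), whose perturbation still has a one-parameter family of orbits entering $p$; here one must argue a bit more carefully, using that $X$ is real-analytic-type behavior is not available, so instead I would invoke the general fact that an isolated zero of a planar vector field with nonzero linear part always has at least one characteristic orbit (an orbit tending to the zero with a well-defined limiting direction) — and by a symmetry/time-reversal argument, or by running the same analysis for $-X$, one produces a second. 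If $A$ has a nonzero purely imaginary pair, then orbits spiral and one appeals to the fact that either all nearby orbits are periodic (each is a leaf, take two) or they spiral toward/away from $p$ (infinitely many leaves limiting to $p$).

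Actually, the cleanest uniform argument I would aim for: the index of the vector field $X$ at the isolated zero $p$ is well-defined, and whenever this index is different from $1$ there must be characteristic sectors (elliptic or hyperbolic), each contributing orbits limiting to $p$; when the index equals $1$ one is in the node/focus/center situation and a whole neighborhood's worth of orbits limit to $p$ (or are periodic). Either way at least two leaves limit to $p$ on a common side. I expect the main obstacle to be the genuinely degenerate linear parts (nilpotent nonzero $A$, and the center case), where Hartman--Grobman does not apply and one cannot rule out pathological smooth behavior by analyticity; the resolution will be to phrase everything in terms of the topological classification of isolated planar singularities (Bendixson's theorem on the structure near an isolated critical point, decomposing a neighborhood into hyperbolic, parabolic, and elliptic sectors), from which the existence of two leaves limiting to $p$ on one side is immediate in every case. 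I would close by noting that the orientation of the leaves (the $t\to+\infty$ versus $t\to-\infty$ dichotomy) is exactly what the sector decomposition records, so no extra work is needed there.
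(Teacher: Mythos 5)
The proposal misses the key structural input and then tries to fill the resulting imagined gap with a tool that does not apply in the smooth category. Concretely: you write that ``a direct computation shows $\operatorname{tr} A$ and $\det A$ are unconstrained,'' but this is false, and the failure to notice it is what sends the argument off course. With $X = \beta_y\,\partial_x - \beta_x\,\partial_y$ one has $\operatorname{tr} DX(0) = \partial_x\beta_y - \partial_y\beta_x = \mathrm{div}\,X(0)$, and the contact condition (exactly as you yourself derive) forces this to be nonzero. This single observation — which is precisely what the paper's proof opens with — immediately kills both ``genuinely degenerate'' cases you spend most of the proposal worrying about: a nilpotent linear part has trace zero, and a center (purely imaginary eigenvalue pair) has trace zero, so neither can occur. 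Consequently $DX(0)$ has at least one real nonzero eigenvalue, and the only dichotomy is rank~$1$ versus rank~$2$, both of which are elementary: in the rank~$2$ case the trace condition rules out purely imaginary eigenvalues, so the singularity is hyperbolic and Hartman--Grobman gives a sink, source, or saddle (two converging trajectories in all cases); in the rank~$1$ case one eigenvalue is $0$ and the other is real nonzero, which is exactly what the paper's Lemma~\ref{Lemma:Rank_1_Case} handles directly.

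The fallback you propose for the (in fact impossible) degenerate cases is also not sound as stated: the Bendixson sector decomposition and the ``characteristic orbit'' theorem for isolated planar singularities require analyticity (or at least a Łojasiewicz-type condition); for a merely smooth vector field an isolated zero can fail to have finitely many sectors and can fail to have any characteristic orbit at all, so invoking that classification would not close the argument even if those cases did arise. Likewise the index-theoretic heuristic (``index $\ne 1$ forces hyperbolic or elliptic sectors, index $= 1$ means node/focus/center'') is not a theorem in the smooth category. The fix is simple: replace the case analysis on the spectrum of $A$ with the trace observation, after which you need only the rank~$1$ and hyperbolic rank~$2$ cases, and you are back to essentially the paper's proof.
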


\begin{proof}
Choose local coordinates $(x,y) \in \mathbb{R}^2$ near $p$ so that $p$ corresponds to the origin and is singular.  
Let $X$ generate the characteristic foliation.  
Since $\mathrm{div}\,X(0,0) \neq 0$, we have $\mathrm{rank}\,DX(0,0) \geq 1$.

If $\mathrm{rank}\,DX(0,0) = 1$, the claim follows from Lemma~\ref{Lemma:Rank_1_Case}.  
In the hyperbolic case $\mathrm{rank}\,DX(0,0) = 2$, the Hartman--Grobman Theorem gives a topological conjugacy with its linearization, whose eigenvalues $\lambda_1,\lambda_2$ have $\mathrm{Re}\,\lambda_i \neq 0$. Depending on their signs, the origin is a \emph{sink} ($\mathrm{Re}\,\lambda_1, \mathrm{Re}\,\lambda_2 < 0$), a \emph{source} ($\mathrm{Re}\,\lambda_1, \mathrm{Re}\,\lambda_2 > 0$), or a \emph{saddle} ($\mathrm{Re}\,\lambda_1 < 0 < \mathrm{Re}\,\lambda_2$). In all cases, there exist at least two distinct trajectories converging to $(0,0)$ as $t \to +\infty$ or $t \to -\infty$.
\end{proof}

\begin{lemma}[Local model near one-dimensional singularity]\label{Lemma_Local_1dim_Crit}
Let $\Sigma \subset (Y^3, \ker \alpha)$ be an embedded surface, and let $T \subset \Crit(\mathcal{F}_\Sigma)$ be a $1$-dimensional component of its singular set. For every $p \in T$, there exists a neighbourhood $\mathcal{U} \subset Y$ of $p$ such that the pair $(\mathcal{U}, \mathcal{U} \cap \Sigma)$ is contactomorphic to $(\mathbb{R}^3, \{z=0\})$ endowed with the contact form $\alpha_+ = dz + ydx$ or $\alpha_- = dz - ydx$.
\end{lemma}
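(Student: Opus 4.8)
\textbf{Proof strategy for Lemma~\ref{Lemma_Local_1dim_Crit}.}
The plan is to produce the normal form by a sequence of coordinate changes, all supported near $p$, and at the very end a Moser-type argument to kill the remaining freedom. First I would work on $\Sigma$ itself. By Proposition~\ref{Prop_Crit_in_1D_mfld}, near $p$ the set $T$ is the zero set of a submersion; choose coordinates $(x,y)$ on $\Sigma$ near $p$ so that $p$ is the origin and $T=\{y=0\}$. Writing $\beta:=\alpha|_\Sigma=\beta_x\,dx+\beta_y\,dy$, the condition $T\subset\Crit(\F_\Sigma)$ means $\beta$ vanishes identically on $\{y=0\}$, so $\beta_x(x,0)=\beta_y(x,0)=0$; hence we may factor $\beta_x=y\,a(x,y)$ and $\beta_y=y\,b(x,y)$ for smooth $a,b$. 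The nondegeneracy computation from Proposition~\ref{Prop_Crit_in_1D_mfld} — namely $d\beta\neq 0$ along $T$ because $\alpha$ is contact and $T\Sigma=\xi$ there — forces $a(x,0)\neq 0$ for all $x$ near $0$ (indeed $d\beta|_T=(\partial_x\beta_y-\partial_y\beta_x)\,dx\wedge dy=-a(x,0)\,dx\wedge dy$). So $a$ has a constant sign $\varepsilon=\pm1$ along $T$; this sign is the contact-geometric invariant that will distinguish $\alpha_+$ from $\alpha_-$.

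Next I would simplify $\beta$ on $\Sigma$. After rescaling $y\mapsto |a(x,0)|\cdot y$ (a diffeomorphism fixing $T$) and absorbing lower-order terms, one arranges $\beta_x=\varepsilon y+O(y^2)$; a further change of the $y$-coordinate depending on $x$ (solving a simple ODE, or just a formal/Borel-type argument followed by a flow) removes the higher-order $y$-dependence in $\beta_x$, giving $\beta_x=\varepsilon y$ exactly. Finally, since $\iota_X\omega_\Sigma=\beta$ and $\beta_y\,dy$ is closed in the $x$-direction modulo the piece we control, a change of coordinates of the form $x\mapsto x+\phi(x,y)$ with $\phi|_{y=0}=0$ can be used to kill $\beta_y$ as well, so that after relabelling $\beta=\varepsilon\,y\,dx$ on a neighbourhood of $p$ in $\Sigma$, with $\varepsilon=+1$ giving the $\alpha_+$ model and $\varepsilon=-1$ the $\alpha_-$ model. (The key point throughout is that every coordinate change preserves $\{y=0\}$, hence does not disturb the singular locus.)

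Now I would extend to a neighbourhood in $Y$. Introduce a transverse coordinate $z$ with $\Sigma=\{z=0\}$ and $\partial_z$ chosen so that $\alpha(\partial_z)>0$ (possible since $\Sigma$ is not tangent to $\xi$ away from... but it \emph{is} tangent along $T$, so instead one normalises $\alpha$ so that $\alpha(\partial_z)=1$ identically near $p$, which is fine because the $dz$-component of a contact form is automatically nonzero once we pick $\partial_z$ generically transverse to $\xi$ off $T$ and then note the contact condition propagates this). Then $\alpha=dz+f\,dx+g\,dy$ with $(f,g)|_{z=0}=(\varepsilon y,0)$ by the previous step. The contact condition $\alpha\wedge d\alpha\neq0$ is an open condition that already holds; what remains is to straighten $f$ and $g$ to $(\varepsilon y,0)$ for all $z$, not just $z=0$. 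For this I would invoke a relative Moser/Gray stability argument: the family $\alpha_t=(1-t)\alpha+t\alpha_\varepsilon$ (where $\alpha_\varepsilon=dz+\varepsilon y\,dx$) consists of contact forms near $p$ because $\alpha$ and $\alpha_\varepsilon$ agree to first order along $\{z=0\}$ and the contact condition is open; Gray stability then produces an isotopy $\psi_t$ fixing $\{z=0\}$ pointwise (since $\dot\alpha_t$ vanishes there) with $\psi_1^*\alpha_\varepsilon$ proportional to $\alpha$. Restricting to the time-$1$ map and shrinking $\mathcal U$ gives the desired contactomorphism of pairs.

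\textbf{Main obstacle.} The delicate step is the last one: the naive linear path $\alpha_t$ need not stay contact on a fixed neighbourhood because $\alpha$ and $\alpha_\varepsilon$ agree only \emph{along} $T$ (not on all of $\Sigma$ — wait, we did arrange agreement on all of $\Sigma$ near $p$, but only to zeroth order in $z$). I expect to need a preliminary normalization ensuring $\alpha$ and $\alpha_\varepsilon$ agree to first order in $z$ along $\Sigma$ as well — differentiating the contact condition once — before Gray stability applies on a genuine neighbourhood; carrying out that first-order matching, and checking the resulting Moser vector field integrates on a uniform neighbourhood of $p$ (rather than shrinking to $p$), is where the real work lies. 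The parallel with the standard proof that a contact form near a point tangent-to-$\xi$ locus has a normal form should make this routine but requires care with the degenerate transversality of $\Sigma$ along $T$.
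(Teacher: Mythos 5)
Your proposal is correct in outcome but takes a genuinely different route from the paper's. The paper does not attempt to normalise the restricted form $\beta=\alpha|_\Sigma$ at all: it uses Lemma~\ref{Lemma:Rank_1_Case} to produce, for each $q\in T$, a smooth curve $L_q$ of characteristic leaves transverse to $T$, assembles these into a diffeomorphism of $\Sigma$ that carries $\mathcal F_\Sigma$ (with its orientation) to the characteristic foliation of $\{z=0\}$ in the model, and then invokes Giroux's theorem \cite{Gi91} as a black box to promote a foliation-preserving surface diffeomorphism to a contactomorphism of neighbourhoods. You instead normalise $\beta$ itself to $\pm y\,dx$ by coordinate changes on $\Sigma$, and then re-prove the neighbourhood extension from scratch via a relative Gray/Moser argument. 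Both work, and your version is more explicit; the paper's is shorter because Giroux's extension theorem absorbs exactly the Gray-stability work you carry out by hand.

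Two remarks on the details. First, the step-by-step normalisation of $\beta$ you sketch (rescale $y$, kill higher-order terms in $\beta_x$, then kill $\beta_y$ by a shear in $x$) is a little fragile as stated, since a shear $x\mapsto x+\phi$ disturbs the already-normalised $\beta_x$. A cleaner order of operations: write $\beta=y\gamma$ with $\gamma=a\,dx+b\,dy$ nonvanishing near $T$ and $a(x,0)\neq0$; straighten the nonvanishing $1$-form as $\gamma=c\,du$ with $c$ of constant sign (flow-box for $\ker\gamma$); set $v=y\,|c|$ and check $du\wedge dv\neq 0$ along $T$; then $\beta=\pm v\,du$ in the coordinates $(u,v)$. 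The sign is $\operatorname{sign}a|_T$, which is exactly the sign of the contact orientation $\alpha\wedge d\alpha$ at $p$.

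Second, and more importantly, the ``main obstacle'' you flag is not actually an obstacle: the naive linear path does stay contact, and no first-order matching in $z$ is needed. After arranging $\alpha(\partial_z)\equiv1$ and $\alpha|_{\{z=0\}}=\alpha_\varepsilon|_{\{z=0\}}$ (agreement of the $1$-forms on $T_pY$ for all $p\in\Sigma$), a direct computation gives, for any $p\in\Sigma$,
\[
\alpha_t\wedge d\alpha_t\big|_p=(1-t)\,\alpha_0\wedge d\alpha_0\big|_p+t\,\alpha_1\wedge d\alpha_1\big|_p,
\]
because $\alpha_0|_p=\alpha_1|_p$ kills all the cross terms. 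Choosing $\varepsilon$ so that $\alpha$ and $\alpha_\varepsilon$ induce the same contact orientation at $p$ (this is precisely the sign $\operatorname{sign}a|_T$ above) makes the right-hand side a convex combination of volume forms of the same sign, hence nonvanishing along $\Sigma$ and, by openness and compactness of $[0,1]$, nonvanishing on a fixed neighbourhood of $p$. Moreover $\dot\alpha_t=\alpha_1-\alpha_0$ vanishes at every point of $\Sigma$, so in the Moser ansatz $X_t=h_tR_t+Y_t$ one may take $h_t\equiv0$, and then $\mu_t|_\Sigma=\dot\alpha_t(R_t)|_\Sigma=0$ and $\iota_{Y_t}d\alpha_t|_\Sigma=0$, forcing $Y_t|_\Sigma=0$ by nondegeneracy of $d\alpha_t|_{\ker\alpha_t}$. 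Thus the Gray vector field vanishes on $\Sigma$, the isotopy fixes $\Sigma$ pointwise, and your argument closes as written.

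One cosmetic remark: the paper's proof writes $T=\{x=0\}$ but then uses the model $\alpha_\pm=dz\pm y\,dx$ and claims the model characteristic foliation is generated by $\pm x\partial_x$; these are mutually inconsistent labellings. Your convention $T=\{y=0\}$ is the one consistent with the stated model $\alpha_\pm=dz\pm y\,dx$ (whose characteristic foliation on $\{z=0\}$ is generated by $\mp y\,\partial_y$).
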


\begin{proof}
Choose local coordinates $(x,y)$ on $\Sigma$ near $p \in T$ in which the singular set $T$ is the $y$-axis $\{x=0\}$.  
Let $X$ be the vector field generating the characteristic foliation of $\Sigma$ in these coordinates.  
Since $\mathrm{div}\,X(0,y) \neq 0$ (which follows from the non-degeneracy of the contact form), we conclude that $DX(0,y)$ has exactly one non-zero eigenvalue.  
Let $\lambda(y)$ denote this unique non-zero eigenvalue of $DX(0,y)$, and let $V(y)$ be a normalized eigenvector corresponding to $\lambda(y)$.\\

\noindent By Lemma~\ref{Lemma:Rank_1_Case}, for every point $(0,y)$ on the $y$-axis there exists a smooth embedding
\[
    L_y : (-\varepsilon, \varepsilon) \hookrightarrow \mathbb{R}^2
\]
satisfying:
\begin{itemize}
    \item $L_y$ is transverse to the $y$-axis and $\left.\frac{d}{dt}\right|_{t=0} L_y(t) = V(y)$;
    \item $L_y^+ := L_y|_{t>0}$ lies in a characteristic leaf contained in $\{y>0\}$;
    \item $L_y^- := L_y|_{t<0}$ lies in a characteristic leaf contained in $\{y<0\}$;
    \item if $\lambda(y) > 0$, the characteristic leaves converge to the $y$-axis in the outward direction; if $\lambda(y) < 0$, they converge inward.
\end{itemize}

noindent Since $y \mapsto V(y)$ is smooth, it follows that the map
\begin{equation}\label{Equation:(x,y)-L_y}
    (x,y) \mapsto L_y(x)
\end{equation}
is a diffeomorphism between neighbourhoods of the $y$-axis, fixing the $y$-axis pointwise and mapping horizontal lines $\{y=\mathrm{const}\}$ to $\mathrm{Im}\,L_y$.\\

The characteristic foliation of $\{z=0\} \subset (\mathbb{R}^3, \ker \alpha_{\pm})$ is generated by $\pm x \partial_x$.  
Therefore, the diffeomorphism \eqref{Equation:(x,y)-L_y} preserves oriented characteristic foliations, and hence, by a theorem of Giroux~\cite{Gi91}, there exists a contactomorphism between the corresponding tubular neighbourhoods.
\end{proof}

\begin{lemma}\label{Lemma:Rank_1_Case}
Let $X$ be a vector field on $\mathbb{R}^2$ with $X(0,0)=0$ and $\mathrm{rank}\,DX(0,0)=1$, whose eigenvalues are $0$ and $\lambda\neq 0$. Then there exist two distinct integral curves $\gamma_1,\gamma_2:\mathbb{R}\to\mathbb{R}^2$ such that:
\begin{enumerate}[label=(\roman*)]
    \item If $\lambda<0$, then $\lim_{t\to+\infty}\gamma_i(t)=(0,0)$; if $\lambda>0$, then $\lim_{t\to-\infty}\gamma_i(t)=(0,0)$.
    \item The restriction $\left.\frac{X}{\|X\|}\right|_{\mathrm{Im}\,\gamma_i}$ extends smoothly to $(0,0)$ and lies in the eigenspace of $\lambda$.
\end{enumerate}
\end{lemma}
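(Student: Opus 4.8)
The plan is to realise $\gamma_{1},\gamma_{2}$ as the two half-orbits making up the \emph{strong stable manifold} of the origin. First I would normalise. Since the eigenvalues $0$ and $\lambda$ are distinct, $DX(0,0)$ is diagonalisable, and after a linear change of coordinates I may assume $E^{\lambda}=\mathbb{R}\,\partial_x$ and $E^{0}=\mathbb{R}\,\partial_y$, so that
\[
X=\bigl(\lambda x+f(x,y)\bigr)\partial_x+g(x,y)\,\partial_y,
\]
with $f,g$ vanishing to second order at the origin. If $\lambda>0$ I replace $X$ by $-X$: this reverses time along integral curves, leaves $E^{\lambda}$ unchanged, and $X/\|X\|$ extends smoothly at $(0,0)$ exactly when $(-X)/\|{-X}\|$ does; so it suffices to treat $\lambda<0$. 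Finally, multiplying $X$ by a compactly supported positive function equal to $1$ near $(0,0)$ changes neither the trajectories (as subsets) nor any asserted property, so I may assume $X$ complete; this is what makes the $\gamma_{i}$ genuinely defined on all of $\mathbb{R}$.

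The key input is the invariant manifold theorem at a (possibly non-hyperbolic) fixed point: associated with the hyperbolic eigenvalue $\lambda<0$ there is a one-dimensional invariant manifold $W$ through the origin, tangent to $E^{\lambda}$, along which every orbit converges to $(0,0)$ as $t\to+\infty$ (at rate $O(e^{\mu t})$ for any $\mu\in(\lambda,0)$). The only non-resonance condition that could obstruct $C^{\infty}$ regularity of $W$ is $k\lambda\neq 0$ for all integers $k\ge 2$, which holds since $\lambda\neq 0$; thus $W$ is $C^{\infty}$, and near the origin it is the graph $\{y=\psi(x):|x|<\varepsilon\}$ of a smooth $\psi$ with $\psi(0)=\psi'(0)=0$. (Alternatively, $W$ can be produced by a Lyapunov–Perron fixed-point argument on the integral system $x(t)=e^{\lambda t}x_{0}+\int_{0}^{t}e^{\lambda(t-s)}f\,ds$, $y(t)=-\int_{t}^{\infty}g\,ds$ in a space of exponentially decaying curves parametrised by $x_{0}\in E^{\lambda}$.) I expect this to be the main obstacle: one must be sure that a genuine $1$-dimensional smooth invariant manifold tangent to $E^{\lambda}$ really survives at the non-hyperbolic fixed point — everything else is bookkeeping.

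Granting $W$, the set $W\setminus\{(0,0)\}$ has exactly two connected components $W^{\pm}=\{y=\psi(x):0<\pm x<\varepsilon\}$; each is forward-invariant, since an orbit starting in it stays in $W$, converges to the origin, and cannot reach the fixed point in finite time, hence cannot cross to the other side. I then let $\gamma_{1}$ be the complete integral curve through a point of $W^{+}$ and $\gamma_{2}$ the one through a point of $W^{-}$; their images are disjoint near $(0,0)$, so $\gamma_{1}\neq\gamma_{2}$, and $\gamma_{i}(t)\to(0,0)$ as $t\to+\infty$. Undoing the time reversal of the first paragraph gives (i). For (ii) I restrict $X$ to $W$, parametrised by $x$: there $X=(\lambda x+\tilde f(x))\partial_x+\tilde g(x)\partial_y$, where $\tilde f(x)=f(x,\psi(x))$ and $\tilde g(x)=g(x,\psi(x))$ are smooth and vanish to second order in $x$ (their derivatives at $0$ equal $\partial_x f(0,0)+\psi'(0)\,\partial_y f(0,0)=0$, and likewise for $g$). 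By Hadamard's lemma $\tilde f(x)=x^{2}h(x)$ and $\tilde g(x)=x^{2}k(x)$ with $h,k$ smooth, so $X|_{W}=x\,Z(x)$ with $Z(x)=(\lambda+xh(x))\partial_x+xk(x)\partial_y$ smooth and $Z(0)=\lambda\,\partial_x\neq 0$. Hence $X/\|X\|=Z/\|Z\|$ on $W^{+}$ and $X/\|X\|=-Z/\|Z\|$ on $W^{-}$, and in both cases this extends smoothly across $x=0$ with limiting value $\mp\tfrac{\lambda}{|\lambda|}\partial_x\in E^{\lambda}$. That is precisely (ii).
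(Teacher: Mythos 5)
Your proof is correct, but it takes a genuinely different route from the paper's. The paper's argument is elementary and self-contained: after linearising so that $DX(0,0)=\mathrm{diag}(\lambda,0)$, it exhibits the cone $\mathcal{D}_a=\{0<x\le a,\ |y|\le|x|\}$ as a trapping region (from $\dot x=\lambda x+\mathcal{O}(x^2)$ and the slope bound $|v/(\lambda x+u)|<1$), extracts a trajectory accumulating at the origin by a compactness argument, and then checks smooth extension of $X/\|X\|$ by estimating the slope and its higher $t$-derivatives via induction and L'H\^opital. You instead invoke the strong stable manifold theorem at the non-hyperbolic fixed point to get a smooth invariant curve $W=\{y=\psi(x)\}$ tangent to $E^{\lambda}$, take $\gamma_{1},\gamma_{2}$ to be the orbits through the two components of $W\setminus\{0\}$, and deduce (ii) from the Hadamard-lemma factorisation $X|_{W}=x\,Z(x)$ with $Z(0)=\lambda\,\partial_x\neq 0$. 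The trade-off is clear: your argument leans on a nontrivial piece of invariant-manifold theory --- and you correctly flag that the $C^{\infty}$ regularity of the strong stable manifold is the crux; it does hold, since the relevant non-resonance/gap condition $k\lambda<0$ is satisfied for every $k$ --- whereas the paper avoids that machinery at the cost of a somewhat informal treatment of the higher-order slope derivatives. Your factorisation argument for (ii) is, if anything, cleaner and more explicit than the paper's L'H\^opital step, while the paper's proof is the more elementary of the two.
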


\begin{proof}
\noindent Assume $\lambda > 0$ (the $\lambda < 0$ case is analogous). After a linear change of coordinates:
\[ X(x,y) = (\lambda x + u(x,y), v(x,y)) \]
where $u,v = \mathcal{O}(\|(x,y)\|^2)$. For $0 < \varepsilon < a$, define:
\[ \mathcal{D}_{\varepsilon,a} = \{\varepsilon \leq x \leq a, |y| \leq |x|\} \subset \mathcal{D}_{a} = \{0<x\leq a, |y|\leq |x|\}. \]

\noindent For small enough $a$ in $\mathcal{D}_{a}$, the Taylor's expansion implies:
\begin{align*}
    \dot{x} &= \lambda x + \mathcal{O}(x^2)\\
    \dot{y} &= \mathcal{O}(x^2).
\end{align*}
\noindent Thus all trajectories in $\mathcal{D}_a$ flow strictly in positive $x$-direction ($\dot{x} > 0$), and the slope satisfies
\begin{equation}\label{Eq:slope}
    \lvert\mathrm{slope}(X(x,y))\rvert =\Big\lvert\frac{v(x,y)}{\lambda x+v(x,y)}\Big\rvert= \frac{\mathcal{O}(x)}{\lambda + \mathcal{O}(x)} < 1,
\end{equation}
\noindent so trajectories cannot exit through $|y| = |x|$. Hence any trajectory entering $\mathcal{D}_{\varepsilon,a}$ must exit through $\{a\} \times [-a,a]$. By continuity and compactness, there exists $\gamma_1$ in $\mathcal{D}_a$ with $\lim_{t\to -\infty}\gamma_1(t) = (0,0)$. Similarly, we find $\gamma_2$ in $\{x < 0\}$ with $\lim_{t\to -\infty}\gamma_2(t) = (0,0)$, proving (i).

For (ii), \eqref{Eq:slope} implies $\lim_{t\to-\infty}\mathrm{slope}(\gamma_1(t))=0$, so $X/\|X\|$ extends continuously to $(0,0)$ with limit $(1,0)$, an eigenvector of $DX(0,0)$ for $\lambda$. The existence of 
\[ \lim_{t\to-\infty}\frac{\partial^n}{(\partial t)^n}\left(\mathrm{slope}(\gamma_1(t))\right) \]
for all $n\geq 1$ (for example via induction and L'Hôpital's rule) shows the extension of the slope function of $X$ from $\mathrm{Im}\,\gamma_1$ to $(0,0)$ is smooth, and hence the extension of $X/\|X\|$ from $\mathrm{Im}\,\gamma_1$ to $(0,0)$ is smooth because 
\[\frac{\gamma'_1(t)}{\|\gamma_1'(t)\|}=\left(\frac{1}{\sqrt{1+\mathrm{slope}(\gamma_1(t))^2}},\frac{\mathrm{slope}(\gamma_1(t))}{\sqrt{1+\mathrm{slope}(\gamma_1(t))^2}}\right).\]
\noindent The proof for $\gamma_2$ follows similar arguments.
\end{proof}

\section{Isotopy extension theorems}

Let $S_0$ and $S_1$ be two surfaces in a contact $3$--manifold $(Y, \xi = \ker \alpha)$. Suppose there exists a diffeomorphism $\phi: S_0 \to S_1$ such that $\phi((S_0)_\xi) = (S_1)_\xi$, where $(S_i)_\xi$ denotes the oriented characteristic foliation induced by $\xi$ on $S_i$. Then, by a theorem of Giroux \cite{Gi91}, the surfaces $S_0$ and $S_1$ admit contactomorphic neighbourhoods. In what follows, we will require a \emph{one-parametric} and a \emph{relative} version of this result, which we state and prove in this section.

\begin{theorem}\label{TheoremIsotopyExtension}
    Let $V\subset S$ be an open subset of a closed surface $S$ in a $3$--dimensional contact manifold $(Y,\xi)$, and let $j_t:S\rightarrow(Y,\xi=\mathrm{ker}\,\alpha),\,t\in[0,1],$ be an isotopy of embeddings such that for all $t\in[0,1]$ we have
    \begin{itemize}
        \item $j_t|_{V}=j_0|_{V}$,
        \item $j_t$ and $j_0$ induce the same characteristic foliation on $S$.
    \end{itemize}
    \noindent Then, there exists a contact isotopy $\psi_t:Y\rightarrow Y$ which satisfies
    \begin{enumerate}
        \item $\psi_t\circ j_0=j_t$,
        \item $\psi_t$ is supported inside $Op(j_t(S\setminus V))$.
    \end{enumerate}
    Extend $j_t$ to a family of embeddings $\phi_t:S\times\mathbb{R}\rightarrow M$ satisfying $\phi_t|_{V\times\mathbb{R}}=\phi_0|_{V\times\mathbb{R}}$. Then we can ensure that $\psi_t$ is generated by a contact Hamiltonian function $H_t$ compactly supported inside $\phi_t((S\setminus V)\times\mathbb{R})$.
\end{theorem}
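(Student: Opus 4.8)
The plan is to build the contact isotopy $\psi_t$ by a parametric version of the Moser--Giroux argument, making the construction canonical enough that both the support condition and the relative condition (keeping things fixed over $V$) come for free. First I would use the isotopy $j_t$ together with the extension $\phi_t : S \times \mathbb{R} \to Y$ to pull everything back to a fixed model: on the tubular neighbourhood $S \times \mathbb{R}$ we obtain a smooth family of contact forms $\alpha_t := \phi_t^* \alpha$, all inducing the \emph{same} characteristic foliation on $S \times \{0\}$ (this is where the hypothesis that $j_t$ and $j_0$ induce the same foliation enters), and all agreeing with $\alpha_0$ over $V \times \mathbb{R}$ since $\phi_t|_{V \times \mathbb{R}} = \phi_0|_{V \times \mathbb{R}}$. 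It then suffices to produce a contact isotopy $g_t$ of $S \times \mathbb{R}$ with $g_t^* \alpha_t = \lambda_t \alpha_0$ for positive functions $\lambda_t$, fixing $S \times \{0\}$ setwise, and supported away from $V$; conjugating/transporting this back by $\phi_t$ and cutting off outside the tubular neighbourhood yields $\psi_t$ on $Y$.

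The core is the Moser trick in its contact, relative form. Differentiating the desired relation, one seeks a time-dependent vector field $Z_t$ (the generator of $g_t$) and a function $\mu_t$ solving
\[
\iota_{Z_t} d\alpha_t + d\big(\iota_{Z_t}\alpha_t\big) + \dot\alpha_t = \mu_t\,\alpha_t .
\]
Following Giroux, one first arranges matters along $S \times \{0\}$: because all $\alpha_t$ induce the same oriented characteristic foliation there, a preliminary diffeomorphism (itself a contact isotopy, constructed from the straightening of leaves as in Lemma~\ref{Lemma_Local_1dim_Crit} and the surrounding discussion) can be used so that the $\alpha_t$ agree to first order along $S \times \{0\}$; then the standard normal-form computation for contact forms on $S \times \mathbb{R}$ produces $Z_t$ with $Z_t|_{S \times \{0\}}$ tangent to $S \times \{0\}$, so that the flow $g_t$ preserves the zero section. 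The key point for the relative statement is that all of these constructions are local and natural: over $V \times \mathbb{R}$ we have $\alpha_t \equiv \alpha_0$, hence $\dot\alpha_t \equiv 0$ there, and the Moser equation is solved by $Z_t \equiv 0$, $\mu_t \equiv 0$ on that region; by choosing the solution operator (inverting $\iota_{(\cdot)} d\alpha_t$ on $\ker\alpha_t$, plus the Reeb-direction adjustment) to be the canonical pointwise one, $Z_t$ automatically vanishes over $V \times \mathbb{R}$. Multiplying the resulting Hamiltonian by a cutoff in the $\mathbb{R}$-direction supported in $(-\delta,\delta)$ and extending by zero gives a globally defined contact Hamiltonian $H_t$ on $Y$ with the claimed support in $\phi_t((S \setminus V) \times \mathbb{R})$, and its flow $\psi_t$ satisfies $\psi_t \circ j_0 = j_t$ by construction.

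A few technical steps need care. One must check that after the preliminary straightening the family $\alpha_t$ can genuinely be made to agree to \emph{infinite} order — or at least sufficiently high order — along $S \times \{0\}$ while staying fixed over $V$; here one repeatedly uses that the characteristic foliation determines the $1$-jet of $\alpha|_S$ up to the conformal factor and an exact term, and that $S$ being closed lets one patch local straightenings into a global one. One also has to confirm that the Moser vector field $Z_t$ is complete (immediate after the cutoff, since it is then compactly supported) and that the conformal factors $\lambda_t$ stay positive (automatic on a compact time interval by shrinking $\delta$). The main obstacle, I expect, is precisely the relative/parametric bookkeeping in this normalization step: ensuring simultaneously that (a) the $\alpha_t$ are put in a common normal form along the zero section, (b) the normalizing diffeomorphisms form a contact isotopy preserving $S \times \{0\}$, and (c) nothing moves over $V \times \mathbb{R}$ — which forces every auxiliary choice (the straightening of leaves, the solution operator for Moser, the cutoff) to be made functorially rather than by a partition-of-unity patching that could violate the relative condition. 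Once the normalization is set up naturally, the remainder is the standard contact Moser argument.
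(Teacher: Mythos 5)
Your overall scaffolding (pull back by $\phi_t$ to $S\times\mathbb{R}$, run a relative contact Moser argument, observe that $\dot\alpha_t\equiv0$ over $V\times\mathbb{R}$ so the Moser vector field vanishes there, then cut off the generating Hamiltonian) matches the paper. However, the crux of your argument -- the ``preliminary diffeomorphism'' that makes the forms $\alpha_t=\phi_t^*\alpha$ agree to first order along $S\times\{0\}$ -- is both where you flag the main obstacle and where the argument actually breaks. Any diffeomorphism $\chi_t$ of $S\times\mathbb{R}$ that fixes $S\times\{0\}$ \emph{pointwise} has $D\chi_t|_{T(S\times\{0\})}=\mathrm{Id}$, hence satisfies $(\chi_t^*\alpha_t)|_{T(S\times\{0\})}=\alpha_t|_{T(S\times\{0\})}$. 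So $\chi_t^*\alpha_t$ still carries the conformal factor $f_t$ with $\alpha_t|_{T(S\times\{0\})}=f_t\,\alpha_0|_{T(S\times\{0\})}$, and no such normalization can make the forms agree -- not even to zeroth order -- along the zero section, let alone to first or infinite order. (If instead you let $\chi_t$ move $S\times\{0\}$ within itself, you break the condition $\psi_t\circ j_0=j_t$, which requires the zero section to be fixed \emph{pointwise}.) The reference to Lemma~\ref{Lemma_Local_1dim_Crit} for ``straightening of leaves'' is also not applicable here; that lemma concerns $1$-dimensional singular loci, not regular foliations.

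The paper's proof does not pre-normalize at all. It writes the Moser vector field as $X_t=h_tR_t+Y_t$ with $R_t$ the Reeb field and $Y_t\in\ker\alpha_t$, and observes that the Moser equation $\dot\alpha_t+dh_t+\iota_{Y_t}d\alpha_t=\mu_t\alpha_t$ leaves $h_t$ free. The hypothesis that all $\alpha_t$ induce the same characteristic foliation gives exactly $\alpha_t|_{T(S\times\{0\})}=f_t\,\alpha_0|_{T(S\times\{0\})}$, which forces $\mu_t|_{S\times\{0\}}=\dot f_t/f_t$ on the tangential part of the equation; one then chooses $h_t$ vanishing on $S\times\{0\}$ and on $V\times(-\varepsilon,\varepsilon)$ whose normal derivative along $S\times\{0\}$ is dictated by the $\partial_r$-component of $\dot\alpha_t-\mu_t\alpha_t$. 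With this $h_t$, the equation holds as a full $1$-form identity along $S\times\{0\}$, so the uniquely determined $Y_t\in\ker\alpha_t$ also vanishes there, giving $X_t|_{S\times\{0\}}=0$ -- pointwise fixing, not merely tangency. This is the idea your proposal is missing: the conformal factor is absorbed into $\lambda_t$ (equivalently $\mu_t$) rather than normalized away, and the freedom in the Reeb component $h_t$ is what makes the relative-and-fixing-$S\times\{0\}$ requirements solvable. You should also strengthen your conclusion ``$Z_t|_{S\times\{0\}}$ tangent to $S\times\{0\}$'' to $Z_t|_{S\times\{0\}}=0$: setwise preservation does not give $\psi_t\circ j_0=j_t$.
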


\begin{proof}
    The assumptions say that all $\phi_t^*\alpha$ induce the same characteristic foliation on $S\times\{0\}$.
    \begin{claim}
        For $\varepsilon>0$ small enough, there exists a smooth family of embeddings $\varphi_t:S\times(-\varepsilon,\varepsilon)\hookrightarrow S\times\mathbb{R}$ such that $\varphi_0$ is the standard inclusion, $\varphi_t|_{S\times\{0\}}=\varphi_0|_{S\times\{0\}}$, $\varphi_t|_{V\times(-\varepsilon,\varepsilon)}=\varphi_0|_{V\times(-\varepsilon,\varepsilon)}$ and
        \begin{equation}\label{MoserTrick}
            \varphi_t^*\phi_t^*\alpha=\lambda_t\phi_0^*\alpha,\text{ for some }\lambda_t:S\times(-\varepsilon,\varepsilon)\rightarrow(0,+\infty).
        \end{equation}
    \end{claim}
    \begin{proof}
        The proof is essentially a version of Moser's trick, similar to the proof of Gray stability theorem. Let $\alpha_t=\phi_t^*\alpha$. By differentiating equation (\ref{MoserTrick}) and with the help of Cartan's formula, the condition a vector field $X_t$ has to satisfy so that its flow will pull back $\phi_t^*\alpha$ to $\lambda_t\phi_0^*\alpha$ is
        \begin{equation}\label{MoserTrickExpanded}
        \dot{\alpha}_t+d(\alpha_t(X_t))+i_{X_t}d\alpha_t=\mu_t\alpha_t,
        \end{equation}
        where $\mu_t=\frac{d}{dt}(\log\lambda_t)\circ\varphi_t^{-1}$.\\

        \noindent Write $X_t=h_tR_t+Y_t$ with $R_t$ the Reeb vector field of $\alpha_t$ and $Y_t\in\mathrm{ker}\,\alpha_t$. The equation (\ref{MoserTrickExpanded}) translates into
        \begin{equation}\label{MoserTrickExpanded2}
            \dot{\alpha}_t+dh_t+i_{Y_t}d\alpha_t=\mu_t\alpha_t.
        \end{equation}

        \noindent Our aim is to find a solution $X_t=h_tR_t+Y_t$ such that $h_t|_{S\times\{0\}}=0$, $Y_t|_{S\times\{0\}}=0$ as well as $h_t|_{V\times(-\varepsilon,\varepsilon)}=0$ and $Y_t|_{S\times(-\varepsilon,\varepsilon)}=0$.\\
        
        \noindent From the assumption that all $\alpha_t=\phi_t^*\alpha$ induce the same characteristic foliation on $S\times\{0\}$ we get that there exists a function $f_t:S\times\{0\}\rightarrow(0,+\infty)$ which satisfies
        \begin{equation}\label{Condition_foliation}
            \forall\,\xi\in T(S\times\{0\})\quad\alpha_t(\xi)=f_t\cdot\alpha_0(\xi).
        \end{equation}
        \noindent We impose the following condition on the function $h_t$:
        \begin{equation}\label{Conditon_ht}
            h_t|_{S\times\{0\}}=0\text{ and }\dot{\alpha}_t+dh_t=\mu_t\alpha_t\text{ along }S\times\{0\}.
        \end{equation}
        \noindent Using (\ref{Condition_foliation}) we see that the two conditions in (\ref{Conditon_ht}) imply that $\mu_t|_{S\times\{0\}}=(df_t/dt)/f_t$. Moreover, if we indeed set $\mu_t|_{S\times\{0\}}=(df_t/dt)/f_t$, they can be simultaneously satisfied.\\
        
        \noindent The condition $\phi_t|_{V\times\mathbb{R}}=\phi_0|_{V\times\mathbb{R}}$ implies that $\alpha_t|_{V\times\mathbb{R}}=\alpha_0|_{V\times\mathbb{R}}$, meaning that $\dot{\alpha}_t|_{V\times\mathbb{R}}=0$, so we can impose another condition on $h_t$ which is compatible with (\ref{Conditon_ht})
        \begin{equation}\label{Condition_ht2}
            h_t|_{V\times\mathbb{R}}=0.
        \end{equation}
        
        \noindent Lastly, pick a function $h_t$ which satisfies (\ref{Conditon_ht}) and (\ref{Condition_ht2}), then we get $\mu_t$ by inserting a Reeb vector field $R_{t}$ in (\ref{MoserTrickExpanded2}). Since $d\alpha_t|_{\mathrm{ker}\,\alpha_t}$ is non-degenerate, the equation (\ref{MoserTrickExpanded2}) admits a unique solution $Y_t\in\mathrm{ker}\,\alpha_t$. From (\ref{Conditon_ht}),(\ref{MoserTrickExpanded2}) we conclude that $Y_t|_{S\times\{0\}}=0$, and from (\ref{Condition_ht2}),(\ref{MoserTrickExpanded2}) we conclude that $Y_t|_{V\times\mathbb{R}}=0$.\\

        \noindent The flow of a vector field $X_t=h_tR_t+Y_t$ defines an isotopy $\varphi_t$ that fixes $S\times\{0\}$ point-wise, and thus it is defined for all $t\in[0,1]$ in a neighbourhood of $S\times\{0\}$.
    \end{proof}
    \noindent The above claim implies that $\phi_t\circ\varphi_t:S\times(-\varepsilon,\varepsilon)\hookrightarrow (Y,\xi)$ is a smooth family of contact embeddings which additionally satisfies $\phi_t\circ\varphi_t(q,0)=j_t(q)$ and $\frac{d}{dt}(\phi_t\circ\varphi_t)|_{V\times(-\varepsilon,\varepsilon)}=0$. A contact Hamiltonian function $\widetilde{H}_t\in C^{\infty}(\phi_t\circ\varphi_t(S\times(-\varepsilon,\varepsilon)))$ generating $\phi_t\circ\varphi_t$ is defined as $\widetilde{H}_t=\alpha(\frac{d}{dt}(\phi_t\circ\varphi_t))$. Let $\chi_t:C^{\infty}_c(Y)$ be a cut-off function which satisfies $\chi|_{j_t(S\setminus\overline{V})}=1$ and $\chi|_{Y\setminus Op(j_t(S\setminus\overline{V}))}=0$. Finally, define a function $H_t\in C^{\infty}_c(Y)$ as $H_t(x)=\chi_t(x)\cdot\widetilde{H}_t(x)$ for $x\in\phi_t\circ\varphi_t(S\times(-\varepsilon,\varepsilon))$ and $H_t(x)=0$ otherwise. Now we get $\psi_t$ as a contact flow generated by a contact Hamiltonian function $H_t$.
\end{proof}

\begin{defn}[Support on the Time Interval]
    Let $\gamma_{t}:X\rightarrow Y,\,t\in I$ be a smooth isotopy. The support of $\gamma_{t}$ at time $t_0\in I$ is defined as
    \[\mathrm{supp}\,\gamma_{t_0}:=\mathrm{closure}\{\gamma_{t_0}(x)\mid\frac{d}{dt}\Big\vert_{t=t_0}\gamma_{t}(x)\neq 0\}\subset Y.\]
    \noindent The support of the isotopy $\gamma_t$ on the interval $J\subset I$ is 
    $\mathrm{supp}_{t\in J}\gamma_{t}:=\bigcup_{t\in J}\mathrm{supp}\,\gamma_{t}$.
\end{defn}

\begin{defn}[$\varepsilon$-Piecewise Supported Isotopy]\label{epsilonPSI}
    Let $\phi_t:S\hookrightarrow(Y,\xi),\,t\in[0,1]$ be a smooth family of embeddings of a surface $S$ to a $3$--dimensional contact manifold $(Y,\xi)$. We say that $\phi_t$ is an \textit{$\varepsilon$-piecewise supported isotopy} (abbreviated $\varepsilon$-PSI) if one can find two finite families $\{U^0_i\}_{i=1}^{n_0}$,$\{U^1_i\}_{i=1}^{n_1}$ of disjoint open subsets of $Y$ such that
    \begin{itemize}
        \item $(\forall\,j\in\{0,1\})(\forall\,1\leq i\leq n_j)$ the diameter of the set $U^j_i$ is less than $\varepsilon$,
        \item  $\mathrm{supp}_{t\in[0,1/2]}\,\phi_t\subset\bigsqcup_{i=1}^{n_0}U^0_i$
        \item $\mathrm{supp}_{t\in[1/2,1]}\,\phi_t\subset\bigsqcup_{i=1}^{n_1}U^1_i$.
    \end{itemize}
\end{defn}

\begin{prop}\label{PropositionPSI}
    Let $\phi_t:S\rightarrow (Y,\xi),\,t\in[0,1]$ be an $\varepsilon$-PSI of a surface $S$ in a $3$--dimensional contact manifold $(Y,\xi)$. Assume that all $\phi_t$ induce the same characteristic foliation on $S$. Then there exists a contact isotopy $\Psi_t:Y\rightarrow Y$ such that for each $t\in[0,1]$ we have
    \begin{enumerate}
        \item $\Phi_t\circ\phi_0=\phi_t$,
        \item $d_{C^0}(\Phi_t,\mathrm{Id})<2\varepsilon$,
        \item $\mathrm{supp}\,\Phi_t\subset Op(\mathrm{supp}\,\phi_t)$.
    \end{enumerate}
\end{prop}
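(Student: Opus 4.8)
The plan is to reduce Proposition \ref{PropositionPSI} to Theorem \ref{TheoremIsotopyExtension} applied separately on each of the two time intervals $[0,1/2]$ and $[1/2,1]$, and then glue. Since $\phi_t$ is an $\varepsilon$-PSI, on $[0,1/2]$ the isotopy is supported inside the disjoint union $\bigsqcup_{i=1}^{n_0} U_i^0$; on each connected piece $\phi_t$ moves only a compact portion $K_i^0$ of $S$, namely $\phi_0^{-1}(U_i^0 \cap \phi_0(S))$ enlarged slightly, while fixing $S$ minus a neighbourhood of it. First I would set $V_0 := S \setminus \bigcup_i \phi_0^{-1}(\overline{U_i^0})$, an open subset on which $\phi_t = \phi_0$ for $t\in[0,1/2]$, and apply Theorem \ref{TheoremIsotopyExtension} with this $V_0$ (after reparametrising $[0,1/2]$ to $[0,1]$; note the statement assumes $S$ closed, which we have). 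This produces a contact isotopy $\Psi_t^0$, $t\in[0,1/2]$, with $\Psi_t^0\circ\phi_0=\phi_t$ and $\mathrm{supp}\,\Psi_t^0\subset Op(\phi_t(S\setminus V_0))\subset Op(\bigsqcup_i U_i^0)$. Repeating on $[1/2,1]$ with $V_1 := S\setminus\bigcup_i\phi_{1/2}^{-1}(\overline{U_i^1})$ — here using that all $\phi_t$ induce the same characteristic foliation, so the hypotheses of Theorem \ref{TheoremIsotopyExtension} hold relative to $V_1$ — yields $\Psi_t^1$, $t\in[1/2,1]$, with $\Psi_{1/2}^1=\mathrm{Id}$. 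Concatenating, $\Phi_t:=\Psi_t^0$ on $[0,1/2]$ and $\Phi_t:=\Psi_t^1\circ\Psi_{1/2}^0$ on $[1/2,1]$ gives a contact isotopy with $\Phi_t\circ\phi_0=\phi_t$, establishing (1), and its support is contained in $Op\big(\bigsqcup_i U_i^0\big)\cup Op\big(\bigsqcup_i U_i^1\big)$, which is contained in $Op(\mathrm{supp}\,\phi_t)$ once one checks the supports match up, giving (3).

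The $C^0$-estimate (2) is where the $\varepsilon$-PSI structure is genuinely used, and it is the main point. The key observation is that the contact Hamiltonian $H_t$ produced in the proof of Theorem \ref{TheoremIsotopyExtension} is supported inside $\phi_t((S\setminus V)\times\mathbb{R})$, and after shrinking the tubular-neighbourhood parameter $\varepsilon$ we may assume this support lies inside an arbitrarily small neighbourhood of $\phi_t(S\setminus V)$ — in particular inside an $\varepsilon$-neighbourhood of $\bigsqcup_i U_i^j$. Since the $U_i^j$ are disjoint sets each of diameter less than $\varepsilon$, a trajectory of the contact flow starting in one enlarged piece, say within the $(\varepsilon/2)$-neighbourhood of $U_i^j$, stays in that connected component of the support for all time (distinct components are separated), so it cannot travel farther than the diameter of that component, which is at most $\varepsilon + 2\cdot(\varepsilon/2)\cdot(\text{slack})$. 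Choosing the tubular neighbourhood thin enough that the total width of each enlarged piece is below, say, $2\varepsilon$, we obtain $d_{C^0}(\Psi_t^j,\mathrm{Id})<\varepsilon$ for each of the two legs; composing the two legs gives $d_{C^0}(\Phi_t,\mathrm{Id})<2\varepsilon$ on $[1/2,1]$ and $<\varepsilon$ on $[0,1/2]$. One must be slightly careful that ``diameter less than $\varepsilon$'' for $U_i^j$ together with the thin tubular neighbourhood really does confine each point's orbit; this is the step I expect to require the most attention.

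The hard part will be making the confinement argument for the $C^0$-bound fully rigorous: one needs to argue that the contact isotopy $\Psi_t^j$, though a priori defined on all of $Y$, actually moves no point by more than (essentially) $\varepsilon$, which requires knowing that its support decomposes into the disjoint pieces $Op(U_i^j)$ and that within each piece the displacement is controlled by the piece's diameter. This uses that the support of $H_t$ can be taken inside an $\varepsilon'$-neighbourhood of $\bigsqcup_i U_i^j$ for $\varepsilon'$ as small as we like (by shrinking the collar), that the flow preserves the (disjoint) connected components of its support, and that a flow supported in a set of diameter $<\varepsilon + \varepsilon'$ displaces points by less than $\varepsilon+\varepsilon'$. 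All of the foliation-preservation input has already been packaged into Theorem \ref{TheoremIsotopyExtension}; here it is invoked twice, once per interval, and the only genuinely new ingredient is the metric bookkeeping of the PSI decomposition.
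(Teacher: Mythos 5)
Your proposal follows the same route as the paper: split the time interval at $t=1/2$ using the $\varepsilon$-PSI decomposition, define the ``fixed'' open sets $V_0,V_1\subset S$ from the support data, apply Theorem~\ref{TheoremIsotopyExtension} once on each half-interval, concatenate the resulting contact isotopies, and then use the disjointness of the $U^j_i$ together with the diameter bound to control displacement.

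One arithmetic slip: you write that ``the total width of each enlarged piece is below, say, $2\varepsilon$, we obtain $d_{C^0}(\Psi^j_t,\mathrm{Id})<\varepsilon$.'' That implication does not hold --- a point confined to a connected set of diameter $<2\varepsilon$ can move up to (nearly) $2\varepsilon$, and composing the two legs would then only give $<4\varepsilon$. The fix is the one the paper uses implicitly: there is no need to enlarge the $U^j_i$ at all. Since $\mathrm{supp}_{t\in[0,1/2]}\phi_t$ is compact and contained in the \emph{open} set $\bigsqcup_i U^0_i$, one can choose the neighbourhood $Op(\cdot)$ from Theorem~\ref{TheoremIsotopyExtension} small enough that $\mathrm{supp}\,\Phi_t\subset\bigsqcup_i U^0_i$ for $t\in[0,1/2]$ (and likewise for the other half). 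Then each trajectory $t\mapsto\Phi_t(p)$ visits at most one $U^0_i$ and at most one $U^1_i$, each of diameter strictly less than $\varepsilon$, yielding $d_{C^0}(\Phi_t,\mathrm{Id})<2\varepsilon$ directly. Your slightly more conservative choice $V_j=S\setminus\bigcup_i\phi_\cdot^{-1}(\overline{U^j_i})$ instead of the paper's $V_j=S\setminus\phi_\cdot^{-1}(\mathrm{supp}_{t}\phi_t)$ is fine; both verify the hypotheses of Theorem~\ref{TheoremIsotopyExtension}. With the enlargement removed, the argument is correct and essentially identical to the paper's.
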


\begin{proof}
    Since $\phi_t$ is an $\varepsilon$-PSI, we can find two families of disjoint open sets of diameters bounded from above by $\varepsilon$, $\{U^0_i\}_{i=1}^{n_0}$ and $\{U^1_i\}_{i=1}^{n_1}$, such that $\mathrm{supp}_{t\in[0,1/2]}\phi_t\subset\bigsqcup_{i=1}^{n_0}U^0_i$ and $\mathrm{supp}_{t\in[1/2,1]}\phi_t\subset\bigsqcup_{i=1}^{n_1}U^1_i$. Consider two open sets $V_0,V_1\subset S$ defined as
    \begin{equation}
        V_0=S\setminus(\phi_0)^{-1}(\mathrm{supp}_{t\in[0,1/2]}\phi_t),\quad V_1=S\setminus(\phi_{1/2})^{-1}(\mathrm{supp}_{t\in[1/2,1]}\phi_t).
    \end{equation}
    \noindent We now get the result by applying Theorem \ref{TheoremIsotopyExtension} two times, first for $\phi_t,\,t\in[0,1/2]$ and $V_0$, and then for $\phi_t,\,t\in[1/2,1]$ and $V_1$. Indeed, we will get a contact isotopy $\Phi_t:Y\rightarrow Y,\,t\in[0,1]$, which extends $\phi_t$ and moreover satisfies
    \begin{equation}\label{randomEq}
    \begin{split}
        (\forall\,t\in[0,1/2])\quad\mathrm{supp}\,\Phi_t\subset Op(\mathrm{supp}_{t\in[0,1/2]}\phi_t)\subset\bigsqcup_{i=1}^{n_0} U^0_i,\\
        (\forall\,t\in[1/2,1])\quad\mathrm{supp}\,\Phi_t\subset Op(\mathrm{supp}_{t\in[1/2,1]}\phi_t)\subset\bigsqcup_{i=1}^{n_1} U^1_i.
    \end{split}
    \end{equation}
    \noindent From (\ref{randomEq}) we conclude that the trajectory $t\mapsto\Phi_t(p)$ of any point $p\in Y$ (that is not fixed by the whole isotopy) visits at most one open set in the family $\{U^0_i\}_{i=1}^{n_0}$ and at most one set in the family $\{U^1_i\}_{i=1}^{n_1}$. Since the diameter of any of those visited sets is less than $\varepsilon$, we get $d_{C^0}(\Phi^t,\mathrm{Id})<2\varepsilon$.
\end{proof}

\section{Contact hammers}

In this section, we define the notion of contact hammer. This definition is inspired by Opshtein's symplectic hammers. We prove that contact hammers allow us to characterize the leaves on regular sets of a characteristic foliation.\\

\noindent Let $(Y,\xi)$ be a contact $3$–manifold and $\Sigma\subset Y$ a surface.  
An open set $\mathcal U\subset Y$ is a \emph{Darboux chart adapted to $\Sigma$} if  
\[
(\mathcal U,\Sigma\cap\mathcal U,\xi)\cong
\bigl(\mathbb R^{3},\mathbb R^{2}_{yz},\ker(dz-x\,dy)\bigr).
\]
\noindent Such a Darboux chart exists in a neighbourhood of every point $p\in\Sigma$ where the characteristic foliation is regular. The complement $\mathcal U\setminus\Sigma$
has two open components $\mathcal U^{\pm}$. 

\begin{defn}[$\varepsilon$–contact hammer]
Fix $p,q\in\Sigma\cap\mathcal U$ and $\varepsilon>0$, and let
$B_{\varepsilon}(p),B_{\varepsilon}(q)\subset\mathcal U$ be the open
$\varepsilon$–balls.
An \emph{$\varepsilon$–contact hammer} from $p$ to $q$ is an isotopy
$\{\Phi_{t}\}_{t\in[0,1]}\subset\overline{\Cont}_{0,c}(\mathcal U,\xi)$
such that there exists two open balls $B_p \subset B_\varepsilon(p)$ and $B_q \subset B_\varepsilon(q)$ with the following properties
\begin{enumerate}[label=(\roman*)]
    \item $\Phi_{0}=\mathrm{Id}$;
    \item for every $t>0$, $\Phi_{t}(\Sigma\cap B_p)\subset\mathcal{U^{+}}$,
    \item for every $t>0$, $\Phi_{t}(\Sigma\cap B_q)\subset\mathcal{U^{-}}$,
    \item if $\zeta\in\Sigma\setminus\bigl(B_p\cup B_q\bigr)$ then $\Phi_{t}(\zeta)\in\Sigma\setminus\bigl(B_p\cup B_q\bigr)$ for all $t\in[0,1]$.
\end{enumerate}
\end{defn}

\begin{rem}
    We denote by $\overline{\Cont}_{0,c}(\mathcal{U},\xi)$ the group of compactly supported homeomorphisms that are $C^0$-limits of contactomorphisms in $\Cont_{0,c}(\mathcal{U},\xi)$. If $\phi \colon  (\mathcal U,\xi) \to (\mathcal V, \xi')$ is a contact homeomorphism from the contact manifolds $\mathcal U$ and $\mathcal V=\phi(\mathcal U)$, then 
    \[\overline{\Cont}_{0,c}(\mathcal V, \xi')=\phi \,\overline{\Cont}_{0,c}(\mathcal U, \xi) \,\phi^{-1} \coloneqq \{\phi \psi \phi^{-1} \mid \psi \in \overline{\Cont}_{0,c}(\mathcal U, \xi)\}.\]
    Indeed, let us show that the last set is included in the first one. By a symmetry argument this is enough to finish the proof. Let $\psi_k \to \psi$ be a sequence of contactomorphisms uniformly converging to $\psi$ and $\phi_k \colon \mathcal U_k \subset \mathcal U \to \phi(\mathcal U_k) \subset \mathcal V$ a sequence of contactomorphisms that converges to $\phi$ for the uniform topology. Provided that the support of $\psi_k$ is compactly included in $\mathcal U_k$, we can define the contactomorphism $\phi_k \psi_k \phi_k^{-1}$ on $\mathcal V$ by extending it to identity where it is still not defined. Since $\phi_k \psi_k \phi_k^{-1}$ converges to $\psi \phi \psi^{-1}$ we get our answer.
\end{rem}

\begin{figure}[h]
    \centering
    \includegraphics[scale=0.8]{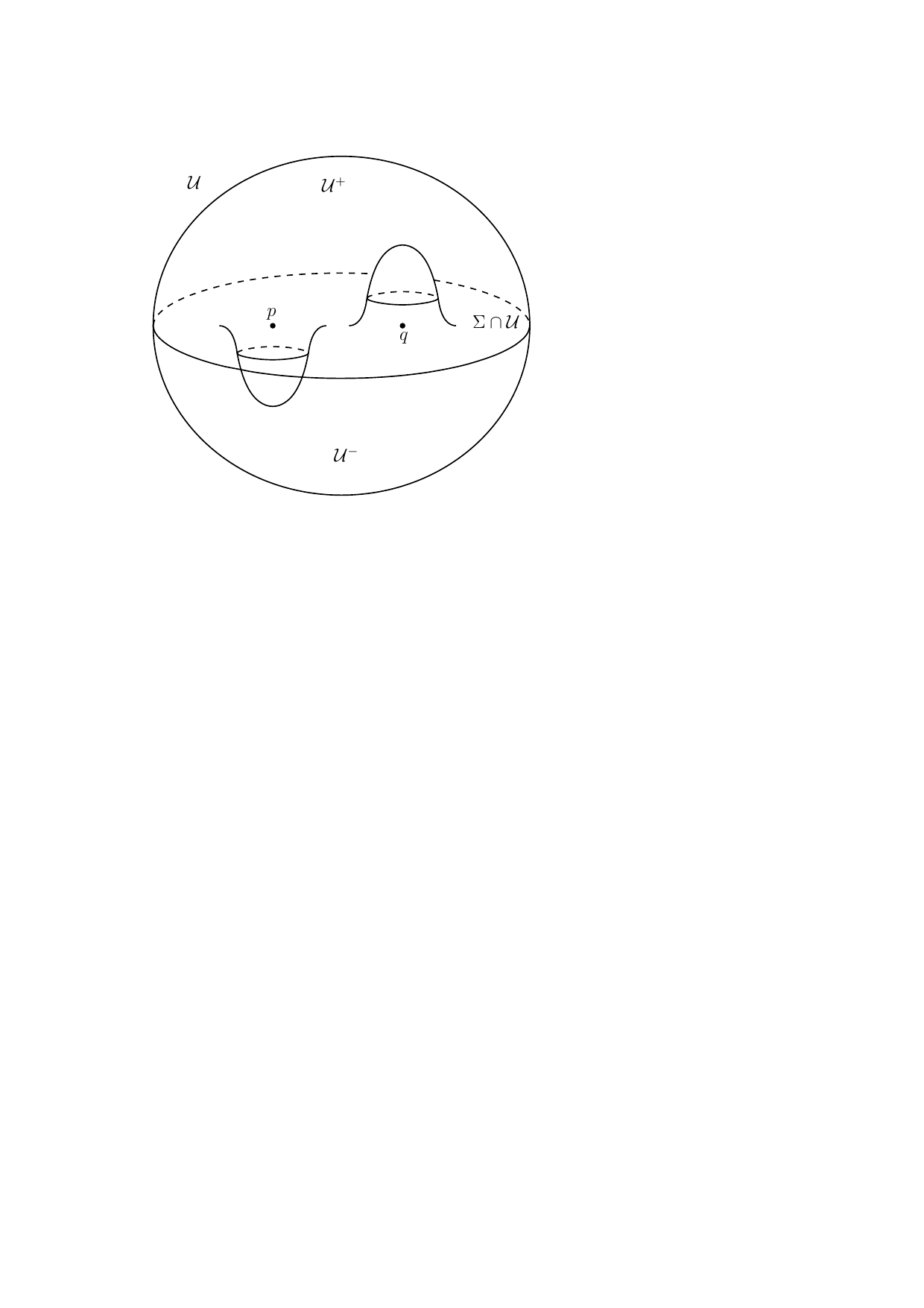}
    \caption{Contact hammer}
    \label{fig:contact_hammer}
\end{figure}

\begin{lemma}\label{Lemma_Hammer_on_single_leaf}
Let $\Sigma$ be a surface in a contact $3$–manifold $(Y,\xi)$ and let $\mathcal U\subset Y$ be a Darboux chart adapted to $\Sigma$. If $p,q\in\Sigma\cap\mathcal U$ lie on the same characteristic leaf $\mathcal L\subset\mathcal F_{\Sigma}$, then for every $\varepsilon>0$ there exists an $\varepsilon$–contact hammer between $p$ and $q$.
\end{lemma}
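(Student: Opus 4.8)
The plan is to build the hammer explicitly, as the flow of a single compactly supported contact Hamiltonian on $\mathcal U$ written in the adapted Darboux coordinates. Under the identification $(\mathcal U,\Sigma\cap\mathcal U,\xi)\cong(\mathbb R^{3},\{x=0\},\ker(dz-x\,dy))$ one has $\mathcal U^{\pm}=\{\pm x>0\}$ (after relabelling if necessary), $\alpha|_{\Sigma}=dz$, and the characteristic foliation on $\Sigma=\{x=0\}$ is directed by $\partial_y$, with leaves $\{x=0,\ z=\mathrm{const}\}$. Hence $p$ and $q$ on the same leaf means $p=(0,y_p,z_0)$ and $q=(0,y_q,z_0)$ for a common $z_0$; I may assume $p\neq q$ and, relabelling, $y_p<y_q$. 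I will use the standard formula for the contact Hamiltonian vector field of a function $H$ relative to $\alpha=dz-x\,dy$,
\[
X_H=\bigl(\partial_yH+x\,\partial_zH\bigr)\partial_x-\partial_xH\,\partial_y+\bigl(H-x\,\partial_xH\bigr)\partial_z .
\]

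Given $\varepsilon>0$, I would fix small $\eta\in\bigl(0,\tfrac12(y_q-y_p)\bigr)$ and $\zeta,\delta>0$, all less than $\varepsilon$, and choose smooth functions: $f\colon\mathbb R\to[0,1]$ with $\mathrm{supp}\,f=[\,y_p-\eta,\ y_q+\eta\,]$, $f'>0$ on $(y_p-\eta,y_p+\eta)$, $f\equiv1$ on $[\,y_p+\eta,\ y_q-\eta\,]$, and $f'<0$ on $(y_q-\eta,y_q+\eta)$; $k\colon\mathbb R\to[0,1]$ with $\mathrm{supp}\,k=[\,z_0-\zeta,\ z_0+\zeta\,]$, positive on the interior; and a cutoff $\ell\colon\mathbb R\to[0,1]$, $\ell\equiv1$ on $[-1,1]$, compactly supported. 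Set $H:=\rho\,f(y)k(z)\ell(x)$ with $\rho>0$ small (to be fixed). Since $\mathrm{supp}\,X_H\subseteq\mathrm{supp}\,H$ is compact in $\mathcal U\cong\mathbb R^{3}$, the time-$t$ flow $\Phi_t$ of $X_H$ is a compactly supported contact isotopy with $\Phi_0=\mathrm{Id}$. I would let $B_p,B_q$ be the coordinate boxes $(-\delta,\delta)\times(y_p-\eta,y_p+\eta)\times(z_0-\zeta,z_0+\zeta)$ and $(-\delta,\delta)\times(y_q-\eta,y_q+\eta)\times(z_0-\zeta,z_0+\zeta)$; they are disjoint and lie in $B_\varepsilon(p)$, $B_\varepsilon(q)$ (these boxes are genuine $\varepsilon$-balls for the sup-distance of the Darboux chart, which one may use throughout, the lemma being a qualitative statement about $\varepsilon$-smallness; alternatively round balls are recovered by a cosmetic change of $f,k$).

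The verification of (i)--(iv) rests on one computation: where $\ell\equiv1$ (in particular on $\{|x|<1\}$) one has $X_H=\bigl(\rho f'(y)k(z)+x\rho f(y)k'(z)\bigr)\partial_x+\rho f(y)k(z)\,\partial_z$, so $\dot y\equiv0$ there, and on $\Sigma$ itself $X_H=\rho f'(y)k(z)\,\partial_x+\rho f(y)k(z)\,\partial_z$. Choosing $\rho$ so small that $\rho\sup_{\mathcal U}|X_{H/\rho}|<1$, every trajectory meeting $\Sigma$ stays in $\{|x|<1\}$ throughout $[0,1]$ and has constant $y$-coordinate there. Then (i) is clear. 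For $(0,y_0,z_0')\in\Sigma\cap B_p$: $y\equiv y_0\in(y_p-\eta,y_p+\eta)$, so $f'(y_0)>0$, while $\dot z=\rho f(y_0)k(z)\ge0$ keeps $z$ non-decreasing and strictly below $z_0+\zeta$, so $k(z(t))>0$ and $\dot x=\rho f'(y_0)k(z(t))>0$ whenever $x=0$; hence $x$ leaves $0$ and cannot return, giving $\Phi_t(\zeta)\in\mathcal U^{+}$ for all $t\in(0,1]$, which is (ii). Statement (iii) is the mirror image on $B_q$, where $f'<0$. For $(0,y_0,z_0')\in\Sigma\setminus(B_p\cup B_q)$: either $f'(y_0)=0$, and then $X_H$ stays tangent to $\Sigma$ along the trajectory, which therefore remains in $\Sigma$ at the fixed height $y_0$ --- outside the $y$-ranges of both boxes; or $k(z_0')=0$, and then $z_0'$ is an equilibrium of $\dot z=\rho f(y_0)k(z)$ with $k'(z_0')=0$, so the point is fixed. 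Either way it stays in $\Sigma\setminus(B_p\cup B_q)$, which is (iv). Points of $\Sigma$ outside $\mathcal U$ or outside $\mathrm{supp}\,H$ are fixed and harmless. Thus $\{\Phi_t\}$ is an $\varepsilon$-contact hammer from $p$ to $q$ --- in fact a smooth one, hence certainly in $\overline{\Cont}_{0,c}(\mathcal U,\xi)$.

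The one step I expect to require real care is (iv): excluding that points of $\Sigma$ outside the two balls escape $\Sigma$ or wander into $B_p\cup B_q$. This is precisely where the hypothesis that $p$ and $q$ lie on the same leaf is essential: it makes available a \emph{product} Hamiltonian $f(y)k(z)$ with $k$ concentrated at the single height $z_0$ and with the plateau of $f$ lying over the leaf segment $[y_p,y_q]\times\{z_0\}$, which is contained in the chart. Over that plateau $X_H$ points along $\partial_z$, i.e.\ tangent to $\Sigma$, and the product structure makes $y$ a conserved quantity of the flow near $\Sigma$, so a box $B_p$, being a product of coordinate intervals, is never entered from outside. For $p$ and $q$ on different leaves no such Hamiltonian exists --- in keeping with the non-existence of hammers between distinct leaves established later via Givental's non-displacement theorem.
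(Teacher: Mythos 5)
Your construction is essentially the paper's own: the same adapted Darboux model $(\mathbb{R}^3,\{x=0\},\ker(dz-x\,dy))$, the same product-type contact Hamiltonian (your $\rho\,f(y)k(z)\ell(x)$ versus the paper's $f_\delta(y)g_\delta(x)g_\delta(z)$, with the $x$-cutoff having a plateau so that $\dot y=0$ near $\Sigma$), and the same computation of $\dot x|_{x=0}$ to push the box around $p$ into $\mathcal U^+$, the box around $q$ into $\mathcal U^-$, and fix the rest of $\Sigma$. Your write-up is somewhat more explicit than the paper's about why trajectories cannot recross $\{x=0\}$ and why the escape regions stay confined (monotonicity of $z$ and conservation of $y$), but the idea and the mechanism are identical.
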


\begin{proof}
We construct an $\varepsilon$-contact hammer in the adopted chart $(\mathbb{R}^3,\ker dz-xdy)$ between any two points $(y_1,0)$, $(y_2,0)$ where $y_1<y_2$. The contact vector field generated by a Hamiltonian function $H$ is given by
\[X_H = \left(\frac{\partial H}{\partial y}+x \dfrac{\partial H}{\partial z}\right)\partial_x-\frac{\partial H}{\partial x}\partial_y + \left(H - x\frac{\partial H}{\partial x}\right)\partial_z.\]

\noindent For $0<\delta<\tfrac{1}{2}(y_{2}-y_{1})$ let $f_{\delta}\in C^{\infty}(\mathbb{R};[0,\infty))$ with $\supp\, f_{\delta}\subset[y_{1}-\delta,y_{2}+\delta]$ and
\[
\begin{cases}
f'_{\delta}(t)>0 &\text{for }y_{1}-\delta<t<y_{1}+\delta,\\
f'_{\delta}(t)=0 &\text{for }y_{1}+\delta\le t\le y_{2}-\delta,\\
f'_{\delta}(t)<0 &\text{for }y_{2}-\delta<t<y_{2}+\delta.
\end{cases}
\]

\noindent Let $g_{\delta}\in C^{\infty}(\mathbb{R};[0,\infty))$ such that
\[
\supp\,g_{\delta}\subset[-\delta,\delta],\qquad
g_{\delta}(t)>0\ \text{for }|t|<\delta,\qquad
g'_{\delta}(t)=0\ \text{for }|t|\leq\delta/2.
\]

\noindent Define $H(x,y,z):=f_{\delta}(y)\,g_{\delta}(x)\,g_{\delta}(z)$, and let $Q_i=(y_i-\delta, y_i+\delta)\times(-\delta,\delta),\,i\in\{1,2\}$. Then the $x$-coordinate evolves as $\dot{x}=f'_\delta(y)g_\delta(x)g_\delta(z)$, thus we have
\[
\dot{x}(0,y,z)=
\begin{cases}
0      & (y,z)\notin Q_1\cup Q_2,\\
>0     & (y,z)\in Q_1,\\
<0     & (y,z)\in Q_2.
\end{cases}
\]
\noindent Hence the flow keeps $\{x=0\}\setminus(Q_1\cup Q_2)$ invariant, pushes points in $Q_1$ to $x>0$, and pushes those in $Q_2$ to $x<0$. When $\delta>0$ is small enough we can pull back this flow to $\Sigma$ and get an $\varepsilon$-contact hammer between $p$ and $q$.
\end{proof}

\begin{prop}\label{Prop_Contact_Hammer}
    Let $\Sigma$ be a surface inside a contact 3-manifold $(Y,\xi)$, and let $\mathcal{U}\subset Y$ be an open Darboux ball disjoint from $\Crit(\F_{\Sigma})$. If for every $\varepsilon>0$ the points $x,y\in \Sigma\cap\mathcal{U}$ admit an $\varepsilon$-contact hammer, then $\mathcal{L}^{\Sigma}_x=\mathcal{L}^{\Sigma}_y$.
\end{prop}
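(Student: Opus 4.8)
The plan is to argue by contradiction in the Darboux model. Identify $(\mathcal U,\Sigma\cap\mathcal U,\xi)\cong(\mathbb R^{3},\{x=0\},\ker(dz-x\,dy))$, so that the characteristic leaves of $\Sigma$ are the lines $\{z=\mathrm{const}\}$ and the two components $\mathcal U^{\pm}$ of $\mathcal U\setminus\Sigma$ are $\{\pm x>0\}$. Suppose $\mathcal L^{\Sigma}_{x}\ne\mathcal L^{\Sigma}_{y}$: in these coordinates $x$ and $y$ sit at heights $z=a$ and $z=b$ with $a\ne b$. Conjugating each hammer by a compactly supported, coorientation--preserving contactomorphism of $\mathcal U$ that fixes $\Sigma$ and every leaf (such maps realise an arbitrary leaf--preserving isotopy of $\{x=0\}$, and conjugation turns hammers into hammers) lets us also assume $x=(0,0,a)$, $y=(0,0,b)$. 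After shrinking $\varepsilon$, the balls $B_{p},B_{q}$ meet only leaves $\{z=c\}$ with $c$ close to $a$, resp.\ close to $b$; in particular every line $\{z=z_{0}\}\subset\{x=0\}$ with $z_{0}$ close to $a$ meets $B_{p}$ but is disjoint from $B_{q}$.

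It is instructive to dispose first of the case of a hammer generated by a genuine smooth, compactly supported contact Hamiltonian $H$: its contact vector field $X_{H}$ has $\partial_{x}$--component equal to $\partial_{y}H(0,y,z)$ along $\{x=0\}$ (cf.\ the formula for $X_{H}$ above). Property (iv) forces $X_{H}$ to be tangent to $\{x=0\}$ along $\Sigma\setminus(B_{p}\cup B_{q})$, i.e.\ $\partial_{y}H\equiv0$ there, while property (ii) gives $\partial_{y}H\ge 0$ on $\Sigma\cap B_{p}$. On a line $\{z=z_{0}\}$ with $z_{0}$ close to $a$ --- which meets $B_{p}$ but not $B_{q}$ --- the function $y\mapsto\partial_{y}H(0,y,z_{0})$ is then nonnegative, and it is the $y$--derivative of the compactly supported function $y\mapsto H(0,y,z_{0})$, hence has zero integral and vanishes identically. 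As this holds for all $z_{0}$ near $a$, $\partial_{y}H$ vanishes on a whole neighbourhood of the leaf $\{z=a\}$; thus $X_{H}$ is tangent to $\{x=0\}$ near $x$, so $\Phi_{t}$ keeps $\Sigma\cap B_{p}$ inside $\Sigma$ for small $t>0$, contradicting (ii). Hence no smooth hammer joins points on different leaves.

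The real content of the proposition is to exclude $C^{0}$--hammers, where the Hamiltonian argument breaks down: a $C^{0}$--limit of contactomorphisms has no infinitesimal generator, and such maps can achieve motions forbidden to smooth ones. The plan is to convert a hammer into a displacement of a non-displaceable set. Let $\Lambda\subset(\mathbb{RP}^{3},\xi_{\std})$ be the Clifford torus; its characteristic foliation is nonsingular, so $\Lambda$ carries Darboux charts adapted to it. Choose one, $\mathcal W\subset\mathbb{RP}^{3}$, and identify it with $\mathcal U$ so that $\Lambda\cap\mathcal W$ corresponds to $\Sigma\cap\mathcal U$ and $x,y$ correspond to points on two distinct characteristic leaves of $\Lambda$. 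Transport the given $\varepsilon$--hammer $\{\Phi_{t}\}\subset\overline{\Cont}_{0,c}(\mathcal U,\xi)$ through this identification and extend it by the identity to $\{\overline\Phi_{t}\}\subset\overline{\Cont}_{0,c}(\mathbb{RP}^{3},\xi_{\std})$; this is legitimate by the conjugation--invariance of $\overline{\Cont}_{0,c}$ recorded in the Remark above, applied inside the chart. By construction $\overline\Phi_{1}$ pushes a small disk of $\Lambda$ near $x$ into one side of $\Lambda$, a small disk near $y$ into the other side, and fixes $\Lambda$ set--wise away from those two disks.

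Since $\overline\Phi_{1}$ by itself only indents $\Lambda$ along two small disks, the last step is to compose it with a suitable auxiliary contact isotopy $\{\rho_{t}\}$ of $\mathbb{RP}^{3}$ --- built from the symmetries of the Clifford torus together with its Reeb/prequantization flow, and exploiting that the two leaves through $x$ and $y$ are distinct (so that, for instance, a contactomorphism of the model can interchange them) --- arranged so that the two opposite indentations produced by the hammer let $\Lambda$ be swept off itself entirely. The resulting $G:=\rho_{1}\circ\overline\Phi_{1}$ lies in $\overline{\Cont}_{0,c}(\mathbb{RP}^{3},\xi_{\std})$ and satisfies $G(\Lambda)\cap\Lambda=\emptyset$, contradicting the $C^{0}$--robust form of Givental's non-displacement theorem: if a $C^{0}$--limit of contactomorphisms displaced the compact torus $\Lambda$, then, $\Lambda$ being compact, some genuine contactomorphism in the identity component close to it would too, which Givental's theorem forbids. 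I expect this last step --- choosing the embedding of $\Lambda$ and the auxiliary isotopy so that the two hammer--indentations genuinely free $\Lambda$ from itself while the part of $\Lambda$ outside the chart stays fixed throughout --- to be the main obstacle.
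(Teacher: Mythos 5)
Your high--level strategy---transfer the hammer to a Darboux chart adapted to the Clifford torus $T\subset\mathbb{RP}^3$ and obtain a contradiction with Givental's theorem---is exactly the paper's. But the final step, displacing the \emph{whole} torus $T$ from itself, is not merely ``the main obstacle'': it cannot work. By property (iv) of a hammer, $\Phi_1$ maps $T\setminus(B_p\cup B_q)$ into $T\setminus(B_p\cup B_q)$, so the only parts of $T$ that leave $T$ are the two tiny disks $T\cap B_p$ and $T\cap B_q$. For $\varepsilon$ small, $B_p$ meets only the leaves $z$--close to $\mathcal L_p$ and $B_q$ meets only those $z$--close to $\mathcal L_q$, hence $T\setminus(B_p\cup B_q)$ contains infinitely many \emph{complete} Legendrian leaves $\mathcal L'$. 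If $\rho\in\Cont_0(\mathbb{RP}^3)$ satisfied $\rho\bigl(\overline\Phi_1(T)\bigr)\cap T=\emptyset$, then in particular $\rho(\mathcal L')\cap T=\emptyset$ for each such $\mathcal L'$, and $\rho$ alone would displace a Legendrian lift of $\gamma$ from $T$---exactly what Givental's theorem forbids, independently of the hammer. So no auxiliary isotopy $\rho$, however clever, can finish your argument.

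The paper's fix is to weaken the target: displace a \emph{single leaf} $\mathcal L_p$ from $T$, not $T$ from itself. The hammer moves precisely the arc $\mathcal L_p\cap B_p$ strictly into $\mathcal U^+$, while (choosing $\varepsilon$ small enough that $B_q\cap\mathcal L_p=\emptyset$) the rest of $\mathcal L_p$ stays on $T$, so $\hat\pi\bigl(\Phi^1(\mathcal L_p)\bigr)$ lies on a single closed side of the equator $\gamma$ and misses a small disk $V\subset S^2$ around $\hat\pi(p)$. One then lifts a Hamiltonian flow on $S^2$ that pushes $\gamma\setminus V$ strictly into that side; for small time $\tau$ this makes $\hat\pi\bigl(\widetilde\varphi_H^\tau\Phi^1(\mathcal L_p)\bigr)$ disjoint from $\gamma$, i.e.\ $\widetilde\varphi_H^\tau\circ\Phi^1$ displaces $\mathcal L_p$ from $T$, contradicting Givental. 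This single--leaf version is both achievable and sufficient, which your whole--torus version is not. (Your first paragraph's direct computation for \emph{smooth} hammers is a fine warm--up, but the paper's argument bypasses it since the Givental route covers smooth and $C^0$ hammers uniformly.)
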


\begin{proof}
    We first describe the setting of Givental's theorem on the non-displaceability of certain Legendrians in $\mathbb{RP}^3$. Let  
    \[S^3 \coloneqq \{(z,w) \in \mathbb{C}^2 \mid |z|^2 + |w|^2 = 1\}\]  
    be the standard 3-sphere endowed with its standard contact structure  
    \[\xi_{\mathrm{std}} = \ker(\alpha_{\mathrm{std}}) = \ker\Big(\sum_i x_i\, dy_i - y_i\, dx_i\Big)\big|_{S^3}.\]
    The Hopf $S^1$-action on $S^3$ is given by $\theta \cdot (z,w) = (e^{i\theta} z, e^{i\theta} w)$ and defines the fibration  
    \[S^1 \to S^3 \xrightarrow{\pi} S^2 = \mathbb{CP}^1.\]  
    This action preserves both the contact structure and the standard contact form $\alpha_{\mathrm{std}}$. In particular, quotienting by the subgroup $\mathbb{Z}/2 \subset S^1$ yields the fibration  
    \[S^1 \to \mathbb{RP}^3 \xrightarrow{\hat{\pi}} S^2.\]
    Under this quotient, $\mathbb{RP}^3$ inherits a contact form, still denoted $\alpha_{\mathrm{std}}$, as well as the contact structure, again denoted $\xi_{\mathrm{std}}$. When $S^2$ is equipped with the standard symplectic form $\omega_0$ of area $4\pi$, we have $d\alpha_{\mathrm{std}} = \hat{\pi}^* \omega_0$. The Clifford torus  
    \[T \coloneqq \Big\{(z,w) \mid |z| = |w| = \frac{1}{\sqrt{2}}\Big\} = \Big\{\Big(\frac{e^{i\theta_1}}{\sqrt{2}}, \frac{e^{i\theta_2}}{\sqrt{2}}\Big) \mid (\theta_1, \theta_2) \in S^1 \times S^1\Big\} \subset S^3\]  
    projects to a torus, still denoted $T$, inside $\mathbb{RP}^3$. Moreover, it projects under $\hat{\pi}$ onto an equator $\gamma \subset S^2$ (i.e., $\gamma$ divides $S^2$ into two regions of equal area). One computes $\alpha_{\mathrm{std}}|_{T} = (d\theta_1 + d\theta_2)/4$, so that the characteristic foliation $\mathcal{F}_T$ consists of parallel Legendrian circles, each being a lift of $\gamma$. The Reeb vector field of $\alpha_{\mathrm{std}}$ is tangent to $T$ and maps each lift to another; in other words, $T$ is Reeb-invariant.  

    Givental's result \cite{Gv90} asserts that no element of $\Cont_0(\mathbb{RP}^3, \xi_{\mathrm{std}})$ displaces any lift $\widetilde{\gamma}$ of $\gamma$ from $T$. Consequently, for all $\psi \in \Cont_0(\mathbb{RP}^3)$,  \[\hat{\pi}(\psi(\widetilde{\gamma})) \cap \gamma \neq \emptyset.\]  
    This statement extends immediately to $\overline{\Cont}_0(\mathbb{RP}^3)$.\\
    
    \begin{figure}[h]
    \centering
    \includegraphics[width=0.9\textwidth]{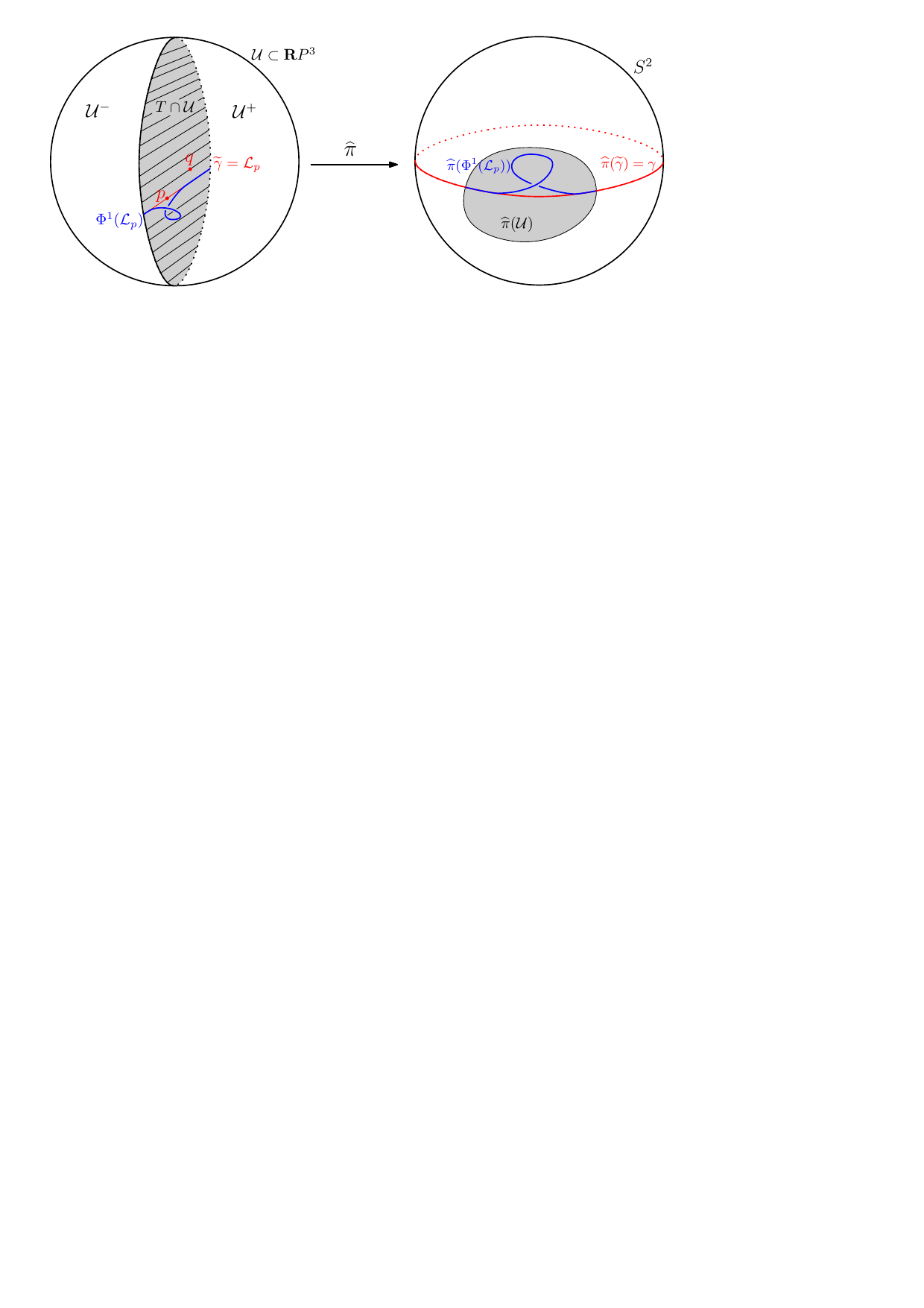}
    \caption{An $\varepsilon$-contact hammer applied to a leaf of the Chekanov torus, with projection to the sphere $S^2$.}
    \label{fig:Legendrian_displacement}
    \end{figure}
    
    Let $\mathcal{U} \subset \mathbb{RP}^3$ be a Darboux chart adapted to $T$. Assume, for contradiction, that there exist $p,q \in T$ such that for all $\varepsilon > 0$, there exists an $\varepsilon$-contact hammer between points $p$ and $q$ lying on different leaves of $\mathcal{F}_T$. For $\varepsilon$ small enough, let  
    \[\Phi^t \in \overline{\Cont}_{0,c}(\mathcal{U}) \subset \overline{\Cont}_0(\mathbb{RP}^3)\]  
    be an $\varepsilon$-contact hammer between $p$ and $q$ with $\mathcal{L}_p \neq \mathcal{L}_q$ and  
    \[B_\varepsilon(p) \cap \mathcal{L}_q = B_\varepsilon(q) \cap \mathcal{L}_p = \emptyset.\]  
    Pick a small Reeb-invariant neighbourhood $N_p$ of the Reeb orbit of $p$ such that $\Phi^1(T)$ avoids $N_p$. Then $V \coloneqq \hat{\pi}(N_p) \subset S^2$ avoids $\hat{\pi}(\Phi^1(\mathcal{L}_p))$. Moreover, since the contact hammer maps the open set $B_p$ (see the definition of a contact hammer) around $p$ to $\mathcal{U}^+$, the projection $\hat{\pi}(\Phi^1(\mathcal{L}_p))$ lies entirely on one side of $\gamma$.\\

    \noindent Pick $H \colon S^2 \to \mathbb{R}$ such that its restriction to $\gamma$ decreases outside of $V$. Then, for small $\tau > 0$, $\varphi_H^\tau(\hat{\pi}(\mathcal{L}_p))$ is disjoint from $\gamma$. In particular, $\widetilde{\varphi}_H^\tau \circ \Phi^1$ displaces $\mathcal{L}_p$ from $T$, where $\widetilde{\varphi}_H^\tau$ denotes the lift of $\varphi_H^\tau$. Since  
    \[\widetilde{\varphi}_H^\tau \circ \Phi^1 \in \overline{\Cont}_0(\mathbb{RP}^3),\]  
    this contradicts Givental's non-displacement result.
\end{proof}

\section{$C^0$-Rigidity of Characteristic Foliations}

In this section we prove the $C^0$-rigidity of characteristic foliations. As announced in the introduction we will prove first that the leaves are preserved away from the critical points. In a second time, we will then show that the singular sets are also preserved. In the first part, we will use the contact hammers while in the second part, we will utilize the results from Section \ref{subsec.sets_that_can_arise} and Section \ref{subsec.local_structure_near_sing}.

\subsection{Proof of Theorem \ref{Thm_Foliation_Rigidity}}

\begin{prop}\label{Prop_Regular_Foliation_Part}
    Let $\phi \in \overline{\Cont}(Y,\xi)$ be a contact homeomorphism with $\phi(\Sigma)$ smooth, and let $\F_\Sigma$, $\F_{\phi(\Sigma)}$ be oriented singular foliations. Define:
    \[
    S = \Sigma \setminus \bigl(\Crit\,\F_\Sigma \cup \phi^{-1}(\Crit\,\F_{\phi(\Sigma)})\bigr).
    \]
    Then the leafs of characteristic foliation on $S$ are set-wise preserved $\phi_*( \F_S) = \F_{\phi(S)}$.

\end{prop}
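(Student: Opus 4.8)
The plan is to show that $\phi$ restricts to a bijection between the leaves of $\F_S$ and the leaves of $\F_{\phi(S)}$. Since a leaf is exactly an equivalence class of the relation ``joined by a path tangent to the characteristic foliation inside $S$'', and $\phi|_S\colon S\to\phi(S)$ is a bijection, it suffices to prove the single inclusion (a) every leaf of $\F_S$ is carried by $\phi$ \emph{into} some leaf of $\F_{\phi(S)}$, together with its mirror statement for $\phi^{-1}$. Indeed, $\phi^{-1}$ is again a contact homeomorphism, $\phi^{-1}(\phi(\Sigma))=\Sigma$ is smooth, and $\phi(S)$ is precisely the set removed from $\phi(\Sigma)$ when running the construction for the pair $(\phi(\Sigma),\phi^{-1})$; so (a) applied to $\phi^{-1}$ is the statement (b) that $\phi^{-1}$ carries leaves of $\F_{\phi(S)}$ into leaves of $\F_S$. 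Granting (a) and (b): if $\mathcal L$ is a leaf of $\F_S$ then $\phi(\mathcal L)\subseteq\mathcal L'$ for some leaf $\mathcal L'$ of $\F_{\phi(S)}$, and $\phi^{-1}(\mathcal L')\subseteq\mathcal L''$ for some leaf $\mathcal L''$ of $\F_S$; as $\mathcal L$ and $\mathcal L''$ both contain $p=\phi^{-1}(\phi(p))$ for $p\in\mathcal L$, we get $\mathcal L=\mathcal L''$, whence $\mathcal L'\subseteq\phi(\mathcal L)\subseteq\mathcal L'$, i.e.\ $\phi(\mathcal L)=\mathcal L'$; surjectivity onto leaves of $\F_{\phi(S)}$ follows symmetrically.

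To prove (a), I would fix a leaf $\mathcal L$ of $\F_S$ and two points $p,q\in\mathcal L$, and join them by a path $c\colon[0,1]\to S$ tangent to the characteristic foliation (possible since $\mathcal L$ is a connected $1$--manifold). Around each $c(s)$ — a regular point of $\F_\Sigma$ lying in the open set $S\subset\Sigma$ — I would pick a Darboux chart $\mathcal U_s$ adapted to $\Sigma$, small enough that $\mathcal U_s\cap\Sigma\subset S$ and that $\phi(\mathcal U_s)$ is contained in a Darboux chart $\mathcal V_s$ adapted to $\phi(\Sigma)$ with $\mathcal V_s\cap\phi(\Sigma)\subset\phi(S)$ (in particular $\mathcal V_s$ disjoint from $\Crit(\F_{\phi(\Sigma)})$); this is possible because $\phi(c(s))\in\phi(S)$ has a neighbourhood basis of such charts and $\phi$ is continuous. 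By compactness of $c([0,1])$ and a Lebesgue-number argument I would then obtain a subdivision $0=t_0<\dots<t_n=1$ and charts $\mathcal U_1,\dots,\mathcal U_n$ as above with $p_{i-1}:=c(t_{i-1})$ and $p_i:=c(t_i)$ lying on a common plaque of $\mathcal U_i$.

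For each $i$, Lemma~\ref{Lemma_Hammer_on_single_leaf} supplies, for every $\delta>0$, a $\delta$--contact hammer $\{\Phi_t\}\subset\overline{\Cont}_{0,c}(\mathcal U_i,\xi)$ from $p_{i-1}$ to $p_i$ with neighbourhoods $B_{p_{i-1}}\subset B_\delta(p_{i-1})$, $B_{p_i}\subset B_\delta(p_i)$. Conjugating, $\{\phi\Phi_t\phi^{-1}\}\subset\overline{\Cont}_{0,c}(\phi(\mathcal U_i),\xi)\subset\overline{\Cont}_{0,c}(\mathcal V_i,\xi)$ by the Remark following the definition of the contact hammer. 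Because $\phi$ is a homeomorphism, $\phi(\Sigma)$ splits $\phi(\mathcal U_i)$ into $\phi(\mathcal U_i^{+})$ and $\phi(\mathcal U_i^{-})$, which lie in the two distinct components $\mathcal V_i^{\pm}$ of $\mathcal V_i\setminus\phi(\Sigma)$ (both sides are met since $\phi(\mathcal U_i)$ is an open neighbourhood of a point on the smooth, hence two-sided, hypersurface $\phi(\Sigma)$). Using $\phi(B_{p_{i-1}})$ and $\phi(B_{p_i})$ as the neighbourhoods of $\phi(p_{i-1})$, $\phi(p_i)$ — they are contained in arbitrarily small metric balls once $\delta$ is small, which is all that Proposition~\ref{Prop_Contact_Hammer} requires of them — together with the identity $\phi(\Sigma\cap B_{p_{i-1}})=\phi(\Sigma)\cap\phi(B_{p_{i-1}})$ and the four defining properties of $\{\Phi_t\}$, I would check that $\{\phi\Phi_t\phi^{-1}\}$ is a contact hammer of arbitrarily small size from $\phi(p_{i-1})$ to $\phi(p_i)$ inside the adapted Darboux chart $\mathcal V_i$. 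Proposition~\ref{Prop_Contact_Hammer} then forces $\phi(p_{i-1})$ and $\phi(p_i)$ onto the same leaf of $\F_{\phi(\Sigma)}$; chaining over $i=1,\dots,n$ places all of $\phi(\mathcal L)$ on a single leaf $\mathcal M$ of $\F_{\phi(\Sigma)}$. Since $\phi(\mathcal L)$ is connected and contained in $\phi(S)$, which misses $\Crit(\F_{\phi(\Sigma)})$, it lies in a single connected component of $\mathcal M\cap\phi(S)$, i.e.\ in one leaf of $\F_{\phi(S)}$; this is (a).

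The step I expect to be the main obstacle is the verification that conjugation by $\phi$ preserves the ``contact hammer'' structure, and the associated layered choice of Darboux charts. Property (iv) transports cleanly exactly because one uses the \emph{images} $\phi(B_{p_{i-1}})$, $\phi(B_{p_i})$ as the new neighbourhoods rather than round balls; the price is that these are merely open sets, so one has to observe that Proposition~\ref{Prop_Contact_Hammer} uses nothing but their being metrically small (ensured by shrinking $\delta$). One must also keep the bookkeeping straight so that the support of $\phi\Phi_t\phi^{-1}$ fits inside a \emph{single} adapted chart $\mathcal V_i$ containing both $\phi(p_{i-1})$ and $\phi(p_i)$ — this is what forces the cover $\{\mathcal U_i\}$ to be taken fine — and verify that the two local sides $\mathcal U_i^{\pm}$ are sent to the two genuine sides of $\phi(\Sigma)$ in $\mathcal V_i$, both of which use only that $\phi$ is a homeomorphism and that $\phi(\Sigma)$ is smooth.
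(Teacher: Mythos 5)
Your proof is correct and takes essentially the same route as the paper: produce $\varepsilon$--contact hammers along the leaf via Lemma~\ref{Lemma_Hammer_on_single_leaf}, conjugate them by $\phi$, and invoke Proposition~\ref{Prop_Contact_Hammer} to conclude that image points lie on a common leaf. The Lebesgue-number chaining and the explicit observation that Proposition~\ref{Prop_Contact_Hammer} only needs $B_p, B_q$ to be metrically small open sets (rather than literal balls, which matters since $\phi(B_p)$ need not be round) are details the paper leaves implicit when it asserts that $\phi\circ\Psi_t\circ\phi^{-1}$ is an $\varepsilon'$-contact hammer between $\phi(p)$ and $\phi(q)$.
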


\begin{proof}
    It suffices to show that for every $p \in S$, the leaves $\phi(\mathcal{L}^{S}_p)$ and $\mathcal{L}^{\phi(S)}_{\phi(p)}$ coincide near $\phi(p)$. Let $U = Op(p) \subset Y$ be a Darboux neighborhood of $p$ such that $\phi(U)$ is a Darboux neighborhood of $\phi(p)$. Suppose, for contradiction, that there exists a point $q \in \mathcal{L}^{S}_p \cap U$ such that $\phi(q) \notin \mathcal{L}^{\phi(S)}_{\phi(p)}$. For $\varepsilon > 0$ small enough, let $\Psi_t$ be an $\varepsilon$-contact hammer between $p$ and $q$ (see Lemma \ref{Lemma_Hammer_on_single_leaf}). Then, the pushforward  
    \[\Phi_t := \phi \circ \Psi_t \circ \phi^{-1} \]
    \noindent defines an $\varepsilon'$-contact hammer between $\phi(p)$ and $\phi(q)$, where $\varepsilon' > 0$ satisfies $\varepsilon' \to 0$ as $\varepsilon \to 0$. This contradicts Proposition \ref{Prop_Contact_Hammer}, completing the proof.
\end{proof}

\begin{prop}\label{Prop_Crit_to_Crit}
    Let $\Sigma$ be a closed oriented surface in a contact 3-manifold $(Y,\xi=\ker\alpha)$, and let $\phi \in \overline{\Cont}(Y,\xi)$ be a contact homeomorphism such that $\phi(\Sigma)$ is smooth. Then $\phi$ preserves the critical set of the characteristic foliation:
    \[
    \phi(\Crit(\F_{\Sigma})) = \Crit(\F_{\phi(\Sigma)}).
    \]
\end{prop}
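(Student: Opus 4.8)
The plan is to prove the two inclusions separately, and in each case to argue by contradiction: if a critical point were sent to a regular point (or vice versa), then the already-established regular-leaf rigidity from Proposition~\ref{Prop_Regular_Foliation_Part}, together with the local structure of the foliation near singularities from Section~\ref{subsec.local_structure_near_sing}, would force a contradiction. Note first that since $\phi$ is a homeomorphism and $\Crit(\F_\Sigma)$, $\Crit(\F_{\phi(\Sigma)})$ are closed (Proposition~\ref{Prop_Crit_in_1D_mfld}), it suffices to prove $\phi(\Crit(\F_\Sigma)) \subset \Crit(\F_{\phi(\Sigma)})$; applying this to $\phi^{-1}$ (which is also a contact homeomorphism, with $\phi^{-1}(\phi(\Sigma)) = \Sigma$ smooth) yields the reverse inclusion. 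So fix $p \in \Crit(\F_\Sigma)$ and suppose for contradiction that $\phi(p) =: p'$ is a regular point of $\F_{\phi(\Sigma)}$.

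The key geometric input is Lemma~\ref{lem.two_leaves_lemma}: there exist at least two distinct characteristic leaves $\mathcal{L}_1, \mathcal{L}_2$ of $\Sigma$ that both converge to $p$ as $t \to +\infty$ (or both as $t \to -\infty$; assume $+\infty$). Pick sequences of regular points $a_n \in \mathcal{L}_1$ and $b_n \in \mathcal{L}_2$ with $a_n, b_n \to p$. These points lie in $S = \Sigma \setminus (\Crit\,\F_\Sigma \cup \phi^{-1}(\Crit\,\F_{\phi(\Sigma)}))$ for $n$ large (since $\Crit\,\F_\Sigma$ is contained in a $1$-manifold and near $p$ a whole half-leaf converges, most points on it are regular and avoid the preimage of the other critical set — this needs a small argument, possibly perturbing the choice along the leaf). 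By continuity, $\phi(a_n), \phi(b_n) \to p'$. Near the regular point $p'$, the characteristic foliation $\F_{\phi(\Sigma)}$ is, in a Darboux chart, the trivial foliation by parallel lines: there is a neighborhood $\mathcal{V}$ of $p'$ such that any two points of $\mathcal{V} \cap \phi(\Sigma)$ lying on the same leaf have the property that the leaf segment between them stays in a slightly larger chart, and crucially, for $n,m$ large, $\phi(a_n)$ and $\phi(a_m)$ lie on the same leaf of $\F_{\phi(\Sigma)}$ only if $a_n, a_m$ do on $\F_\Sigma$ — which they do, both being on $\mathcal{L}_1$ — so by Proposition~\ref{Prop_Regular_Foliation_Part} we get $\phi(\mathcal{L}_1 \cap S) \subset \mathcal{L}^{\phi(\Sigma)}_{p'}$ and likewise $\phi(\mathcal{L}_2 \cap S) \subset \mathcal{L}^{\phi(\Sigma)}_{p'}$, and these are the \emph{same} regular leaf. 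Now the contradiction: a single regular leaf of $\F_{\phi(\Sigma)}$, being an embedded $1$-manifold, cannot accumulate at the single point $p'$ from "two sides" the way the images of $\mathcal{L}_1$ and $\mathcal{L}_2$ must — more precisely, in the trivializing chart near $p'$ the leaf through $p'$ is a single arc through $p'$, whereas $\phi(\mathcal{L}_1)$ and $\phi(\mathcal{L}_2)$ are distinct (their preimages are distinct leaves, and $\phi$ is injective) and both approach $p'$, so at least one of them enters the chart along a direction transverse to the leaf through $p'$ or must be a proper subset of that leaf not containing $p'$ as an interior limit — carefully, one shows the leaf through $p'$ is $\phi(\mathcal{L}_1) = \phi(\mathcal{L}_2)$ only if $\mathcal{L}_1 = \mathcal{L}_2$, contradiction. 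In degenerate configurations (e.g.\ when $p$ lies on a $1$-dimensional component of $\Crit\,\F_\Sigma$), one invokes Lemma~\ref{Lemma_Local_1dim_Crit}: the surface near $p$ is contactomorphic to the model $dz \pm y\,dx$, whose characteristic foliation has an explicit line of singularities with leaves converging from both sides, and one either transports the local orientation of leaves through $\phi$ (using that $\phi$ preserves oriented regular leaves up to a sign that is locally constant) to see that $p'$ must also be singular, or one embeds the model into $\mathbb{RP}^3$ and applies Givental's non-displacement result as in Proposition~\ref{Prop_Contact_Hammer} to rule out that the singular line is "smoothed away."

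For the reverse inclusion $\Crit(\F_{\phi(\Sigma)}) \subset \phi(\Crit(\F_\Sigma))$, as noted, apply the forward inclusion to the contact homeomorphism $\phi^{-1}$ and the smooth surface $\phi(\Sigma)$. Finally, assemble: the set-wise equality $\phi(\Crit\,\F_\Sigma) = \Crit\,\F_{\phi(\Sigma)}$ follows, which combined with Proposition~\ref{Prop_Regular_Foliation_Part} (now applied with $S = \Sigma \setminus \Crit\,\F_\Sigma$, since the two critical sets now correspond under $\phi$) gives $\phi_*(\F_\Sigma) = \F_{\phi(\Sigma)}$.

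The main obstacle I expect is the case analysis by singularity type and, within it, the careful handling of the $1$-dimensional components of the critical set: for isolated hyperbolic or rank-one singularities the "two leaves converging to $p$" argument of Lemma~\ref{lem.two_leaves_lemma} plugs cleanly into regular-leaf rigidity, but when $\Crit\,\F_\Sigma$ is $1$-dimensional near $p$ one must work harder — using either the precise local orientation data of the model $dz \pm y\,dx$ (tracking on which side leaves flow in versus out, an invariant of the oriented foliation that $\phi$ should respect) or a non-displacement argument à la Givental. Making rigorous the claim that the images of $\mathcal{L}_1$ and $\mathcal{L}_2$ cannot both be a single regular leaf accumulating at $p'$ — i.e.\ extracting the topological contradiction from "one embedded arc, two distinct approaching leaves" — will also require care, essentially a local analysis of how an embedded $1$-manifold can limit onto a point in a surface foliated by a regular foliation.
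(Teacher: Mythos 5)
Your overall structure matches the paper's: reduce to one inclusion via $\phi^{-1}$, split by the type of singularity, invoke Lemma~\ref{lem.two_leaves_lemma} together with regular-leaf rigidity (Proposition~\ref{Prop_Regular_Foliation_Part}), and invoke a Givental-type non-displacement argument for the $1$-dimensional critical case. However, there is a genuine gap in your central claim for the main (non-$1$D) case. You argue that two distinct leaves $\mathcal L_1,\mathcal L_2$ of $\F_\Sigma$ both accumulating at $p$ cannot have images both accumulating at a regular point $p'$, ``essentially a local analysis of how an embedded $1$-manifold can limit onto a point.'' But topologically this is perfectly possible: the leaf $\mathcal L'$ of $\F_{\phi(\Sigma)}$ through a regular point $p'$ splits at $p'$ into two half-arcs, and nothing prevents $\phi(\mathcal L_1\cap U)$ and $\phi(\mathcal L_2\cap U)$ from being exactly these two half-arcs. (Picture a saddle in $\Sigma$: its two stable separatrices, both flowing \emph{into} $p$, would map under $\phi$ to the two halves of $\mathcal L'$ — a configuration with no set-theoretic contradiction.) Your sentence ``the leaf through $p'$ is $\phi(\mathcal L_1)=\phi(\mathcal L_2)$ only if $\mathcal L_1=\mathcal L_2$'' is a non sequitur: the two image arcs need not coincide as sets even when they lie on the same leaf.

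What rescues the argument — and what the paper's proof of Lemma~\ref{Lemma_0D-Crit_to_0D-Crit} actually uses — is the \emph{orientation} of the characteristic foliation. Lemma~\ref{lem.two_leaves_lemma} gives two leaves that converge to $p$ \emph{in the same time direction} (both as $t\to+\infty$ or both as $t\to-\infty$), whereas the two half-arcs of $\mathcal L'$ at a regular $p'$ converge in \emph{opposite} time directions (one toward $p'$, one away). By Corollary~\ref{Cor_Oriented_Char_F_to_Oriented_Char_F}, $\phi$ respects the orientation of the foliation up to a locally constant sign on $\Sigma\setminus(\Crit\F_\Sigma\cup\phi^{-1}\Crit\F_{\phi(\Sigma)})$, and the paper arranges (via Proposition~\ref{Prop_Crit_in_1D_mfld}) that a punctured neighbourhood of $p$ minus the critical sets is connected so that this sign is actually constant; then the ``both in, both in'' versus ``one in, one out'' pattern is the contradiction. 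You mention transporting orientation only as a fallback for the $1$-dimensional case, but it is in fact the load-bearing step for the $0$-dimensional case, and without it your argument would fail at a saddle. Conversely, for $p$ on a $1$-dimensional component of $\Crit\F_\Sigma$ the paper does \emph{not} use orientation (and cannot, since both sides of the singular line look the same); it instead builds a small Hamiltonian push that would Reeb-displace $\phi(\Crit\F_\Sigma)$ near $p$ and contradicts Givental via the Clifford torus, using the local model of Lemma~\ref{Lemma_Local_1dim_Crit}. Your sketch names both ideas but does not actually carry out either, and as written the first and crucial step would not go through.
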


\noindent We now use the next lemma in order to deduce from Proposition \ref{Prop_Crit_to_Crit} a statement of the action of $\phi$ on the orientation of the leaves of the characteristic foliations of $\Sigma$ and $\phi(\Sigma)$.

\begin{lemma}\label{lemma_orientation_on_cc}
Let $\Sigma$ be a surface endowed with any singular foliation $\mathcal F$ such that $\Crit (\mathcal F)$ is a closed subset of $\Sigma$. Let assume that a leaf $\mathcal L$ of the foliation is oriented, then one can orient the leaves of the connected component of $\Sigma \setminus \Crit(\mathcal F)$ that contains $\mathcal L$ in a unique way if orientable.
\end{lemma}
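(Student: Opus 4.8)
The plan is to rephrase the statement in terms of the tangent line field of the foliation on the regular part, after which the conclusion follows from nothing more than connectedness. Write $C$ for the connected component of $\Sigma\setminus\Crit(\F)$ containing $\mathcal L$. Since $\Crit(\F)$ is closed, $C$ is open in $\Sigma$, hence itself a surface, and on $C$ the foliation $\F$ is nonsingular: it integrates a rank-one subbundle $\ell:=T\F\subset TC$. Coherently orienting all leaves of $\F|_C$ means choosing, continuously in $p\in C$, an orientation of each line $\ell_p$; this is exactly an orientation of the real line bundle $\ell\to C$ (equivalently, a nowhere-vanishing section of $\ell$ modulo multiplication by positive functions). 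The hypothesis ``if orientable'' is precisely the statement that such an orientation of $\ell$ exists.

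For existence, suppose $\ell$ is orientable and fix an orientation $o$ of $\ell$. Restricting $o$ to the connected $1$-manifold $\mathcal L$ yields an orientation of $\mathcal L$ that is either the prescribed one or its opposite; in the latter case replace $o$ by its fibrewise opposite $-o$, which is again a continuous orientation of $\ell$ since negation is continuous on each fibre. The resulting orientation of $\ell$, and hence of every leaf of $\F|_C$, extends the prescribed orientation of $\mathcal L$.

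For uniqueness, let $o_1,o_2$ be two orientations of $\ell$ over $C$ that both restrict to the prescribed orientation on $\mathcal L$. At each $p\in C$ the line $\ell_p$ has exactly two orientations, so $o_1(p)=\varepsilon(p)\,o_2(p)$ for a function $\varepsilon:C\to\{\pm1\}$. In a foliation chart (flow box) for $\F|_C$ the leaves are parallel segments and any continuous fibrewise orientation of $\ell$ is constant in the chart coordinates, so $\varepsilon$ is locally constant; as $C$ is connected, $\varepsilon$ is constant, and since $o_1$ and $o_2$ agree on the nonempty set $\mathcal L$ we get $\varepsilon\equiv 1$, i.e.\ $o_1=o_2$.

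I expect the only genuine subtlety to be the identification in the first paragraph, namely checking that ``coherently orienting the leaves of $C$'' is literally the same datum as orienting the rank-one bundle $\ell$, together with the flow-box argument that a continuous fibrewise orientation is locally constant; once that is in place, existence and uniqueness are immediate from the connectedness of $C$ and of $\mathcal L$. Note in particular that potential pathologies of individual leaves — non-closedness, self-accumulation, density — are irrelevant, since the whole argument is carried out on the surface $C$ rather than leaf by leaf. (One may equivalently phrase existence/uniqueness as triviality of the orientation monodromy of $\ell$ along loops in $C$, but the line-bundle formulation avoids any choice of basepoint or path.)
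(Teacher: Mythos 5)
Your proof is correct, and it is a cleaner packaging of the same underlying idea (connectedness plus local constancy of a foliation orientation). The paper argues by propagation along paths: given a leaf $\mathcal L'$ in the component $C$, it joins a point of $\mathcal L'$ to a point of $\mathcal L$ by a path, covers the path by finitely many foliation charts, and notes that within each chart the orientation of one leaf forces the orientation of all the others. You instead recast ``coherently orienting the leaves of $\F|_C$'' as orienting the line bundle $\ell = T\F \to C$, obtain existence by fixing any orientation of $\ell$ and flipping it if it disagrees on $\mathcal L$, and obtain uniqueness from the fact that two orientations of $\ell$ differ by a locally constant sign function, which is constant on the connected set $C$. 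Your line-bundle formulation buys two things: it makes explicit what the paper's unstated clause ``if orientable'' means (orientability of $\ell$), and it settles path-independence automatically, whereas the paper's path-propagation argument silently relies on it. The paper's version is more elementary and hands-on; yours is more conceptual and also separates existence from uniqueness cleanly.
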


\begin{proof}
We want to orient a leaf $\mathcal L'$ that lies in the same connected component as $\mathcal L$ . Let $x \in \mathcal L'$ and $y \in \mathcal L$, then we can find a smooth path $\gamma \colon [0,1] \to \Sigma \setminus \Crit(\mathcal F)$ that joins $y$ to $x$. We can cover the path by a finite number of charts of the foliation. On each chart the orientation is uniquely defined by one leaf. This means that the orientation of the leaf that contains $y$ is uniquely determined by the orientation of the leaf that contains $x$. This finishes the proof.
\end{proof}

Since a contact homeomorphism between two smooth hypersurface preserves the foliation near points that are regular in both surfaces and since those foliations are naturally oriented, we can formulate the following corollary.

\begin{cor}\label{Cor_Oriented_Char_F_to_Oriented_Char_F}
    Let $Y$ be a contact manifold and $\Sigma \subset Y$ a hypersurface. Let $\phi$ be a contact homeomorphism such that the image $\phi(\Sigma)$ is smooth. Then the homeomorphism $\phi \lvert_{\Sigma} \colon \Sigma \to \phi(\Sigma)$ either completely preserves or completely reverses the orientation of the characteristic foliation on each connected component of
    \[\Sigma \setminus \left(\Crit(\mathcal F_\Sigma) \cup \phi^{-1}(\Crit(\mathcal F_{\phi(\Sigma)})\right).\]
\end{cor}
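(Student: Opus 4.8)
The plan is to prove Corollary~\ref{Cor_Oriented_Char_F_to_Oriented_Char_F} as a direct consequence of the preceding results: the regular-part statement Proposition~\ref{Prop_Regular_Foliation_Part}, which says the leaves are set-wise preserved on $S = \Sigma \setminus (\Crit(\mathcal F_\Sigma) \cup \phi^{-1}(\Crit(\mathcal F_{\phi(\Sigma)})))$, together with the propagation Lemma~\ref{lemma_orientation_on_cc}. The first observation I would make is that on every point $p\in S$, both $\Sigma$ and $\phi(\Sigma)$ are smooth surfaces with a regular point of the characteristic foliation at $p$ and at $\phi(p)$ respectively, so each leaf through such a point carries its canonical orientation coming from the cooriented contact structure via $\iota_X\omega_\Sigma = \alpha|_\Sigma$ (after a choice of orientations and area forms; note the ambiguity in this choice only flips everything globally on a component, which is exactly the ``preserves or reverses'' dichotomy we are claiming). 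Thus $\phi|_S$ carries each oriented leaf of $\mathcal F_S$ to a leaf of $\mathcal F_{\phi(S)}$, and the only question is whether it respects or reverses the chosen orientations.

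Next I would fix a connected component $C$ of $S$ and show the orientation behaviour is constant on $C$. Pick a reference leaf $\mathcal L_0 \subset C$. By Proposition~\ref{Prop_Regular_Foliation_Part}, $\phi(\mathcal L_0)$ is a leaf in $\mathcal F_{\phi(S)}$; either $\phi$ preserves or reverses its orientation, say it preserves it (the reversing case is symmetric). I then claim that for every other leaf $\mathcal L' \subset C$ the map $\phi$ also preserves the orientation. To see this, note that $\phi$ restricts to a homeomorphism $C \to \phi(C)$ carrying $\mathcal F_C$ to $\mathcal F_{\phi(C)}$ and carrying the foliation charts of $C$ to foliation charts of $\phi(C)$; since $C$ is connected and $\phi(C)$ is connected, the continuity-of-orientation argument in the proof of Lemma~\ref{lemma_orientation_on_cc} applies. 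Concretely: join a point of $\mathcal L_0$ to a point of $\mathcal L'$ by a path in $C$, cover it by finitely many foliation boxes of $\mathcal F_C$; $\phi$ sends this chain to a chain of foliation boxes of $\mathcal F_{\phi(C)}$ covering the image path; in each box the orientation of all leaves is determined by a single leaf, so whether $\phi$ preserves or reverses orientation cannot change as we move from one box to the next along the chain. Hence the behaviour on $\mathcal L'$ matches that on $\mathcal L_0$.

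Finally I would assemble the statement: by the previous paragraph, on each connected component $C$ of $\Sigma \setminus (\Crit(\mathcal F_\Sigma)\cup\phi^{-1}(\Crit(\mathcal F_{\phi(\Sigma)})))$ the restriction $\phi|_C$ either preserves the orientation of every leaf of $\mathcal F_C$ or reverses the orientation of every leaf of $\mathcal F_C$. That is exactly the assertion of the corollary. I do not expect a serious obstacle here; the one point requiring a little care is making sure the ``orientation of the characteristic foliation'' is well defined on $S$ in the first place — this needs local coorientation of $\xi$ and local orientations of $\Sigma$ and $\phi(\Sigma)$, and on a given component these choices are unique up to a global sign, so the dichotomy in the statement is not just an artifact but genuinely the sharp conclusion one can draw. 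A second minor point is that a connected component $C$ of $S$ need not have $\phi(C)$ equal to a component of $\phi(\Sigma)\setminus(\cdots)$ a priori, but since $\phi$ is a homeomorphism mapping the removed critical sets to the removed critical sets, $\phi(C)$ is indeed a connected component of $\phi(\Sigma)\setminus(\Crit(\mathcal F_{\phi(\Sigma)})\cup\phi(\Crit(\mathcal F_\Sigma)))$, so the chain-of-boxes argument stays inside a single connected piece, which is all that is needed.
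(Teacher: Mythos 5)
Your proof is correct and follows essentially the same route the paper intends: combine Proposition~\ref{Prop_Regular_Foliation_Part} (set-wise preservation of regular leaves) with the chain-of-foliation-boxes argument from Lemma~\ref{lemma_orientation_on_cc} to propagate the preserve/reverse dichotomy across a connected component. The paper states the corollary without an explicit proof, taking it as an immediate consequence of exactly these two ingredients, and your write-up supplies the details the paper leaves implicit — in particular the observation that $\phi(C)$ is a component of $\phi(\Sigma)\setminus\bigl(\Crit(\mathcal F_{\phi(\Sigma)})\cup\phi(\Crit(\mathcal F_\Sigma))\bigr)$ (which needs only that $\phi$ is a bijection, not the yet-unproved Proposition~\ref{Prop_Crit_to_Crit}) and the remark that the orientation of the characteristic foliation is well defined on each component only up to an overall sign, which is precisely why the dichotomy is the sharp statement.
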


\noindent We split the proof of Proposition \ref{Prop_Crit_to_Crit} in two lemmas that we prove in the next section.

\begin{lemma}\label{Lemma_0D-Crit_to_0D-Crit}
With the same hypothesis as in Proposition \ref{Prop_Crit_to_Crit}, for each point $p \in \Crit(\mathcal F_\Sigma)$ that is not in a 1-dimensional set of critical points, we have $\phi(p) \in \Crit \big( \mathcal F _{\phi(\Sigma)} \big)$.
\end{lemma}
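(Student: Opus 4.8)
The plan is to prove Lemma~\ref{Lemma_0D-Crit_to_0D-Crit} by contradiction: assume $p \in \Crit(\F_\Sigma)$ is an isolated-type critical point (not lying in a $1$-dimensional component of $\Crit(\F_\Sigma)$) but that $\phi(p)$ is a regular point of $\F_{\phi(\Sigma)}$. The key structural input is the \emph{Two leaves Lemma} (Lemma~\ref{lem.two_leaves_lemma}): there exist at least two distinct characteristic leaves $\mathcal{L}_1, \mathcal{L}_2$ of $\Sigma$ that both converge to $p$ as $t \to +\infty$ (or both as $t \to -\infty$). I would fix a small Darboux chart $\mathcal{V}$ around $\phi(p)$ adapted to $\phi(\Sigma)$, so that $\F_{\phi(\Sigma)}$ is a regular, essentially parallel (oriented) foliation there. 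Since $p$ is isolated in $\Crit(\F_\Sigma)$ and the full critical set $\Crit(\F_\Sigma) \cup \phi^{-1}(\Crit(\F_{\phi(\Sigma)}))$ is closed, a punctured neighbourhood $U \setminus \{p\}$ of $p$ lies in the regular set $S$ of Proposition~\ref{Prop_Regular_Foliation_Part}, so on $U \setminus \{p\}$ the leaves of $\F_\Sigma$ are carried by $\phi$ to leaves of $\F_{\phi(\Sigma)}$ inside $\mathcal{V}$.

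Next I would extract the contradiction from the local orientation. By Corollary~\ref{Cor_Oriented_Char_F_to_Oriented_Char_F}, on the punctured neighbourhood $U \setminus \{p\}$ (which is connected, being a punctured disk) the map $\phi$ either preserves or reverses the orientation of the characteristic foliation. The two leaves $\mathcal{L}_1, \mathcal{L}_2$ near $p$ both flow \emph{toward} $p$ (say as $t\to+\infty$; the other case is symmetric). Hence $\phi(\mathcal{L}_1), \phi(\mathcal{L}_2)$ are two leaf-segments in the regular foliation of $\mathcal{V}$, and — depending on whether $\phi$ preserves or reverses orientation — their oriented flow either both points toward $\phi(p)$ or both points away from $\phi(p)$. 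But in a regular adapted Darboux chart the oriented characteristic foliation is (after Giroux's normalization, cf.\ the model $dz - x\,dy$) conjugate to a nonsingular foliation transverse to a family of lines, with the flow crossing any short transversal exactly once in one fixed direction; in particular, near the single point $\phi(p)$ there is exactly one local leaf through $\phi(p)$, and that leaf passes \emph{through} $\phi(p)$ — the flow neither terminates at nor emanates from $\phi(p)$. So it is impossible for two distinct nearby leaves to both limit to $\phi(p)$ in forward (or both in backward) time. This contradicts the image of the Two leaves Lemma configuration and forces $\phi(p) \in \Crit(\F_{\phi(\Sigma)})$.

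To make this rigorous I would phrase the final step as follows. Choose a short embedded transversal arc $\tau$ to $\F_{\phi(\Sigma)}$ in $\mathcal{V}$ through $\phi(p)$; the leaves of $\F_{\phi(\Sigma)}$ near $\phi(p)$ are parametrized by $\tau$, and for each $s \in \tau$ the leaf $\ell_s$ crosses $\tau$ transversally, so a small neighbourhood of $\phi(p)$ meets each $\ell_s$ in a single arc passing from one side of $\tau$ to the other, with the time orientation fixed across all of them. Now $\phi^{-1}(\tau)$ is a topological transversal to $\F_\Sigma$ on $U \setminus \{p\}$, and the leaves $\mathcal{L}_1, \mathcal{L}_2$ converging to $p$ would have to cross $\phi^{-1}(\tau)$ infinitely often or approach $\phi^{-1}(p) = p$ while staying on distinct $\phi^{-1}(\ell_s)$; tracking them through $\phi$, both $\phi(\mathcal{L}_i)$ accumulate at $\phi(p)$, which lies on the single leaf $\ell_{\phi(p)}$, yet $\phi(\mathcal{L}_1) \ne \phi(\mathcal{L}_2)$ are on leaves $\ell_{s_1} \ne \ell_{s_2}$ — and two distinct leaves of a regular foliation cannot both accumulate at an interior regular point. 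This is the desired contradiction.

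The main obstacle I anticipate is handling the case distinction cleanly: a priori one of $\phi(\mathcal{L}_1), \phi(\mathcal{L}_2)$ could lie on the \emph{same} image leaf $\ell_{\phi(p)}$ as $\phi(p)$ while the other lies on a nearby one, or the convergence ``$t\to+\infty$'' could interact subtly with orientation reversal. The resolution is that the Two leaves Lemma gives \emph{two distinct} leaves with the \emph{same} limiting behaviour (both forward or both backward), and orientation-preserving or -reversing behaviour of $\phi$ is \emph{uniform} on the connected punctured neighbourhood by Corollary~\ref{Cor_Oriented_Char_F_to_Oriented_Char_F}; so the two image leaves inherit the \emph{same} limiting behaviour at $\phi(p)$, and a regular point of a smooth oriented foliation simply cannot be a common forward (or common backward) limit of two distinct leaves. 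One must also note that $\phi(\mathcal{L}_i)$ are genuine (possibly partial) leaves of $\F_{\phi(\Sigma)}$ and not merely leaves-up-to-the-puncture, but since $\phi(p)$ is assumed regular, $\phi(p)$ has a full leaf through it and Proposition~\ref{Prop_Regular_Foliation_Part} applies on the whole punctured disk, so no issue arises there.
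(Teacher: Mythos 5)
Your overall strategy — Two Leaves Lemma plus uniformity of orientation preservation/reversal on a connected regular region plus a local contradiction at the regular point $\phi(p)$ — is exactly the paper's argument. However, there are two genuine issues.

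First, the hypothesis that $p$ is ``not in a $1$-dimensional set of critical points'' does \emph{not} mean $p$ is isolated in $\Crit(\mathcal F_\Sigma)$, and this breaks your connectedness step. For instance, after identifying the containing $1$-manifold $\ell$ with an interval, the critical set could be $\{0\}\cup\{1/n : n\geq 1\}$ with $p=0$; every punctured neighbourhood of $p$ then contains other critical points, so $U\setminus\{p\}$ is not contained in the regular set $S$ and you cannot invoke Corollary~\ref{Cor_Oriented_Char_F_to_Oriented_Char_F} on it. The paper handles this correctly: by Proposition~\ref{Prop_Crit_in_1D_mfld}, $\Crit(\mathcal F_\Sigma)$ sits inside a smooth $1$-manifold $\ell$, and since $p$ is not in a $1$-dimensional component, $\Crit(\mathcal F_\Sigma)\cap\mathcal U$ is a \emph{proper} closed subset of $\ell\cap\mathcal U$; one then shows $\mathcal U\setminus\Crit(\mathcal F_\Sigma)$ is connected (the two sides of $\ell$ are joined through a regular point of $\ell$), which is what the Corollary actually needs. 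Your argument can be repaired by replacing ``punctured disk'' with this connectedness argument, but as written it is not valid.

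Second, your concluding sentence — ``two distinct leaves of a regular foliation cannot both accumulate at an interior regular point'' — is false as stated: the two half-arcs of the single leaf $\ell_{\phi(p)}$ through $\phi(p)$ both accumulate at $\phi(p)$. Indeed, if $\phi(p)$ were regular, then $\phi(\mathcal L_1)$ and $\phi(\mathcal L_2)$ would necessarily be exactly those two half-arcs (not arcs on ``nearby'' leaves $\ell_{s_1}\neq\ell_{s_2}$, as you suggest, since an arc on a different parallel leaf does not accumulate at $\phi(p)$). The real contradiction is the oriented one you state in your middle paragraph: the two half-arcs of $\ell_{\phi(p)}$ at $\phi(p)$ have opposite limiting orientations (one forward, one backward), whereas uniform orientation preservation/reversal of $\phi$ would force $\phi(\mathcal L_1),\phi(\mathcal L_2)$ to have the \emph{same} limiting orientation. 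Your ``resolution'' paragraph acknowledges this, so you have the right idea, but the final unoriented phrasing should be cut, and the oriented argument should carry the full weight — as it does in the paper.
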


\begin{lemma}\label{Lemma_1D-Crit_to_1D-Crit}
With the same hypothesis as in Proposition \ref{Prop_Crit_to_Crit}, for each point $p \in \Crit(\mathcal F_\Sigma)$ that lies inside a 1-dimensional set of critical points, we have $\phi(p) \in \Crit\big(\mathcal{F}_{\phi(\Sigma)}\big)$.
\end{lemma}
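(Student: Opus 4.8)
The plan is to use the local model near a one-dimensional singular set (Lemma~\ref{Lemma_Local_1dim_Crit}) together with the orientation-tracking principle from Corollary~\ref{Cor_Oriented_Char_F_to_Oriented_Char_F}. Let $p$ lie inside a $1$-dimensional component $T\subset\Crit(\mathcal F_\Sigma)$, and suppose for contradiction that $\phi(p)$ is a regular point of $\mathcal F_{\phi(\Sigma)}$. By Lemma~\ref{Lemma_Local_1dim_Crit}, there is a neighbourhood of $p$ in which $\Sigma$ together with $\xi$ looks like $(\mathbb R^3,\{z=0\},\ker(dz\pm y\,dx))$, so the characteristic foliation near $p$ is generated by $\pm x\partial_x$: on one side of $T$ (say $x>0$) all regular leaves flow \emph{away from} $T$, and on the other side ($x<0$) they also flow away from $T$ (or toward it, depending on the sign), but crucially the two sides $\{x>0\}$ and $\{x<0\}$ are \emph{distinct connected components} of the regular part $\Sigma\setminus\Crit(\mathcal F_\Sigma)$ locally, and the leaves on the two sides both limit onto $T$ with a definite orientation behaviour: on one side the oriented leaves point toward $T$ and on the other they point away from $T$ (since $\alpha_\pm$ changes the sign of $y$, the coefficient $\pm x$ has opposite meaning relative to the coorientation). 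I would make this precise: in the model $\alpha_+ = dz+y\,dx$ the oriented characteristic vector field is (a positive multiple of) $x\partial_x$ near $\{x=0\}$, so leaves in $\{x>0\}$ flow in the $+x$ direction (away from $T$) and leaves in $\{x<0\}$ flow in the $-x$ direction (also away from $T$); thus $T$ is a line of \emph{sources}. For $\alpha_-$ it is a line of \emph{sinks}. Either way, the two local components of the regular part each have $T$ as a full boundary limit set, and the oriented leaves on each side converge to points of $T$.

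Next I would transport this picture through $\phi$. Since $\phi$ is a homeomorphism and $\phi(p)$ is assumed regular, there is a Darboux chart adapted to $\phi(\Sigma)$ around $\phi(p)$ in which $\phi(\Sigma)$ is $\{x=0\}$ in $(\mathbb R^3,\ker(dz-x\,dy))$ and the characteristic foliation is the nonsingular foliation by the lines $\{x=0, z=\mathrm{const}\}$ — in particular, through $\phi(p)$ there passes exactly \emph{one} regular leaf, and a neighbourhood of $\phi(p)$ in $\phi(\Sigma)$ is a single flow box with that leaf an interior leaf. Now the contradiction: the set $\phi(T)$ is a curve through $\phi(p)$ consisting of images of singular points; by Proposition~\ref{Prop_Regular_Foliation_Part} the images of the regular leaves on each side of $T$ are regular leaves of $\mathcal F_{\phi(\Sigma)}$, and they all accumulate onto $\phi(T)$. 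But in the flow box around $\phi(p)$, the regular leaves are parallel line segments and no regular leaf accumulates onto another; a curve onto which a whole one-parameter family of distinct regular leaves accumulates from both sides cannot exist inside a flow box of a nonsingular foliation. More carefully: pick a small transversal $\tau$ to the leaf through $\phi(p)$ inside the flow box; the images $\phi(\mathcal L)$ of the leaves $\mathcal L\subset\Sigma$ on the $\{x>0\}$-side near $p$ must cross $\tau$, but as $\mathcal L\to T$ these leaves converge in $\Sigma$ to $T$ (uniformly on compact pieces, by the model), so their images converge to $\phi(T)$ — hence $\phi(T)\cap\tau$ would have to be a limit of leaf-points on $\tau$, forcing $\phi(T)$ to be a leaf itself, contradicting that $\phi(T)$ is contained in the image of the critical set and (by Lemma~\ref{Lemma_0D-Crit_to_0D-Crit} applied along $T$, or directly) that points of $\phi(T)$ are not interior points of regular leaves in a flow box. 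Alternatively, and perhaps more cleanly, I would invoke Corollary~\ref{Cor_Oriented_Char_F_to_Oriented_Char_F}: $\phi$ either preserves or reverses the orientation of the characteristic foliation on each regular component. The $\{x>0\}$-side and $\{x<0\}$-side of $T$ are (locally) separate components, but they are glued along $T$; in the model, on the two sides the oriented leaves have \emph{opposite} behaviour relative to $T$ (one flows toward $\phi(T)$, the other away) — yet in a flow box around the regular point $\phi(p)$ all leaves on both sides of any transversal flow in the \emph{same} direction. This is the contradiction.

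I expect the main obstacle to be making the accumulation/limiting statement rigorous: one must control how the leaves $\phi(\mathcal L)$ behave as $\mathcal L$ ranges over the family limiting onto $T$, and argue that their images cannot all stay as interior leaves of a single flow box while also converging to a curve in that box that is not itself a leaf. The cleanest route is probably to first reduce, via Lemma~\ref{Lemma_0D-Crit_to_0D-Crit} and the openness of the regular set, to showing $\phi(T)\subset\Crit(\mathcal F_{\phi(\Sigma)})$ as a \emph{closed} statement, then note that if some $\phi(p)$ were regular, an entire relatively open sub-arc of $\phi(T)$ around $\phi(p)$ would be regular (since $\Crit$ is closed), giving a whole flow box worth of regular points that is the limit set of two families of regular leaves coming from genuinely different components of $\Sigma\setminus\Crit(\mathcal F_\Sigma)$ — and then the orientation obstruction of Corollary~\ref{Cor_Oriented_Char_F_to_Oriented_Char_F} closes the argument, since the homeomorphism would have to simultaneously preserve and reverse the induced orientation when passing across $\phi(T)$ inside a single flow box where no such sign change is possible. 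A secondary technical point is checking that the orientation conventions for $\alpha_+$ versus $\alpha_-$ indeed produce the claimed ``source vs.\ sink'' and ``toward vs.\ away'' dichotomy; this is a short computation with $\iota_X\omega_\Sigma=\alpha|_\Sigma$ in the two models.
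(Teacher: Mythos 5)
Your plan runs into a genuine obstruction exactly at the point where you lean on Corollary~\ref{Cor_Oriented_Char_F_to_Oriented_Char_F}. That corollary only says that $\phi$ preserves or reverses orientation \emph{on each connected component} of $\Sigma\setminus(\Crit(\mathcal F_\Sigma)\cup\phi^{-1}(\Crit(\mathcal F_{\phi(\Sigma)})))$ separately. In your setting the $1$--dimensional set $T$ locally separates $\Sigma$ into two components $\{x>0\}$ and $\{x<0\}$, and the corollary places \emph{no constraint at all} on whether $\phi$ acts with the same or opposite orientation sign on the two sides. In the local model $T$ is a line of sources (or sinks), so both half-leaves flow away from (or into) $T$; in a flow box around a regular point $\phi(p)$ with $\phi(T)$ as a transversal, one side flows toward $\phi(T)$ and the other flows away. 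These two pictures \emph{are} compatible: simply let $\phi$ preserve orientation on $\{x>0\}$ and reverse it on $\{x<0\}$. Your claimed contradiction therefore evaporates. This is precisely the contrast with the $0$--dimensional case (Lemma~\ref{Lemma_0D-Crit_to_0D-Crit}), where $\Crit(\mathcal F_\Sigma)\cap\mathcal U$ is a \emph{strict} closed subset of the ambient curve $\ell$, so $\mathcal U\setminus\Crit(\mathcal F_\Sigma)$ is connected and a single orientation sign applies to both local sides of $\ell$, yielding the parity contradiction. You even note in passing that the two sides ``are distinct connected components,'' but you do not see that this observation defeats your own argument rather than helping it. (Your earlier ``accumulation onto $\phi(T)$'' version is also not a contradiction: each leaf limits to a single point of $T$, not to $T$ as a set, and after $\phi$ the images simply cross the transversal $\phi(T)$ once — a perfectly coherent flow-box picture, as Proposition~\ref{Prop_Regular_Foliation_Part} already tells us.)

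The paper's actual proof is structurally different and, crucially, does \emph{not} rely on orientation at all. Assuming $\phi(p)$ regular, one works in the model chart from Lemma~\ref{Lemma_Local_1dim_Crit} and observes that $\ell:=\phi(\Crit(\mathcal F_\Sigma)\cap\mathcal U)$ is a topological transversal meeting each leaf of the flow box exactly once. One then builds a contact Hamiltonian $H$ on $\phi(\mathcal U)$ whose flow moves $\phi(\Sigma)$ along the leaves in a single direction, thereby pushing $\ell$ off a neighbourhood of $\phi(p)$. Pulling back by $\phi^{-1}$ yields an element of $\overline{\Cont}_{0,c}(\mathcal U)$ that displaces $\Crit(\mathcal F_\Sigma)$ from a neighbourhood of $p$. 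Finally, the local model identifies the Reeb trace of $\Crit(\mathcal F_\Sigma)$ near $p$ with a piece of the Clifford torus in $\mathbb{RP}^3$, and one derives a contradiction with Givental's non-displacement theorem, exactly as in the proof of Proposition~\ref{Prop_Contact_Hammer}. So the missing ingredient in your plan is precisely this displacement / Givental input; the orientation bookkeeping cannot replace it here.
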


\noindent The proposition \ref{Prop_Crit_to_Crit} follows now without much more work.

\begin{proof}[Proof of Proposition \ref{Prop_Crit_to_Crit}]
From Lemma \ref{Lemma_0D-Crit_to_0D-Crit} and \ref{Lemma_1D-Crit_to_1D-Crit} we get that for all point $p \in \Crit(\mathcal F_\Sigma)$, $\phi(p)$ is a critical point of $\phi(\Sigma)$. Thus 
\[\phi\big(\Crit (\mathcal F _\Sigma) \big)\subset \Crit \big(\mathcal F_{\phi(\Sigma)}\big).\] 
Using the same lemmas but on $\phi^{-1}$ prove the other inclusion and thus concludes the proof of the proposition.
\end{proof}

\subsection{Proof of Lemmas \ref{Lemma_0D-Crit_to_0D-Crit} and \ref{Lemma_1D-Crit_to_1D-Crit}}

In this section we prove the lemmas \ref{Lemma_0D-Crit_to_0D-Crit} and \ref{Lemma_1D-Crit_to_1D-Crit} that, in the setting of Proposition \ref{Prop_Crit_to_Crit}, deals with the image of a critical points under the respective conditions that it does not belongs to a 1-dimensional set of critical points or that it does not. We start off with Lemma \ref{Lemma_0D-Crit_to_0D-Crit}, the proof relies on the fact that in this case, one can look at the orientation of the characteristic leaves to infer a contradiction.

\begin{proof}[Proof of Lemma \ref{Lemma_0D-Crit_to_0D-Crit}]
We work under the assumptions and with the notations of Proposition \ref{Prop_Crit_to_Crit}. Let $p$ be a critical point of the characteristic foliation that is not contained in a 1-dimensional component of the set of critical points and let us assume by contradiction that $\phi(p) \not \in \Crit(\mathcal F_{\phi(\Sigma)})$. Since the set $\Crit(\mathcal F_{\phi(\Sigma)})$ is closed and does not contain $\phi(p)$, we can find an open ball $\mathcal U$ around $p$ such that 
\[\mathcal U \cap \phi^{-1}(\Crit(\mathcal F_{\phi(\Sigma)}))= \emptyset.\] 
The set $\Crit(\mathcal F _\Sigma)$ is a strict closed subset of $\ell$, a 1-dimensional smooth manifold, according to Proposition \ref{Prop_Crit_in_1D_mfld}. The set $\mathcal U \setminus \ell$ has exactly two connected component and these two sets are connected in $\mathcal U \setminus \Crit(\mathcal F _\Sigma)$ since $\Crit(\mathcal{F}_\Sigma)\cap \mathcal U$ is a strict subset of $\ell$. This implies by Corollary \ref{Cor_Oriented_Char_F_to_Oriented_Char_F}, that the homeomorphism $\phi\lvert_{\mathcal U \setminus \Crit(\mathcal F _\Sigma)}$ sends the oriented foliation $\mathcal F _\Sigma  \lvert_{\mathcal U\setminus \Crit(\mathcal F_\Sigma)}$ to the oriented foliation $\mathcal F _{\phi(\Sigma)}$. However on $\Sigma$, the point $p$ has two leaves converging to it whose orientation both points toward $p$ or outward $p$ while on $\phi(\Sigma)$ (Lemma \ref{Lemma:Rank_1_Case}), the point $\phi(p)$ has \textit{exactly} two leaves converging to it one towards and one outwards (Lemma \ref{Lemma_Local_1dim_Crit}). This is the contradiction we wanted.
\end{proof}

We prove now Lemma \ref{Lemma_1D-Crit_to_1D-Crit}, the proof relies on the fact that if 1-dimensional sets of singularities are not sent onto 1-dimensional set of singularities then in the image there is a contactomorphism that preserves $\phi(\Sigma)$ but displaces an open set of $\phi(\Crit(\mathcal F_\Sigma))$. We use this contactomorphism and the local modal near singularities to Reeb-displace $\Crit(\mathcal F_\Sigma)$ from itself on $\Sigma$. We show that this violates again Givental's result.

\begin{proof}[Proof of Lemma \ref{Lemma_1D-Crit_to_1D-Crit}]
We work under the assumptions and with the notations of Proposition \ref{Prop_Crit_to_Crit}. Let $p$ be a critical point of the characteristic foliation that is contained in a 1-dimensional component of the set of critical points and let us assume by contradiction that 
\[\phi(p) \not \in \Crit(\mathcal F_{\phi(\Sigma)}).\]
From the closeness of $\Crit(\mathcal F_{\phi(\Sigma)})$ and the standard neighbourhood near 1-dimensional critical sets (Lemma \ref{Lemma_Local_1dim_Crit}), one can find an open Darboux ball $\mathcal U \subset M$ around $p$ such that 
\[\mathcal U \cap \phi^{-1}(\Crit(\mathcal F_{\phi(\Sigma)}))= \emptyset\]
and $(\mathcal U, \mathcal U \cap \Sigma)$ is contactomorphic to a neighbourhood of 0 of $(\mathbb R^3, xy\text{-plane})$ endowed with the standard contact structure $\ker(dz-ydx)$.

\begin{figure}[h]
    \centering
    \includegraphics[width=0.7\textwidth]{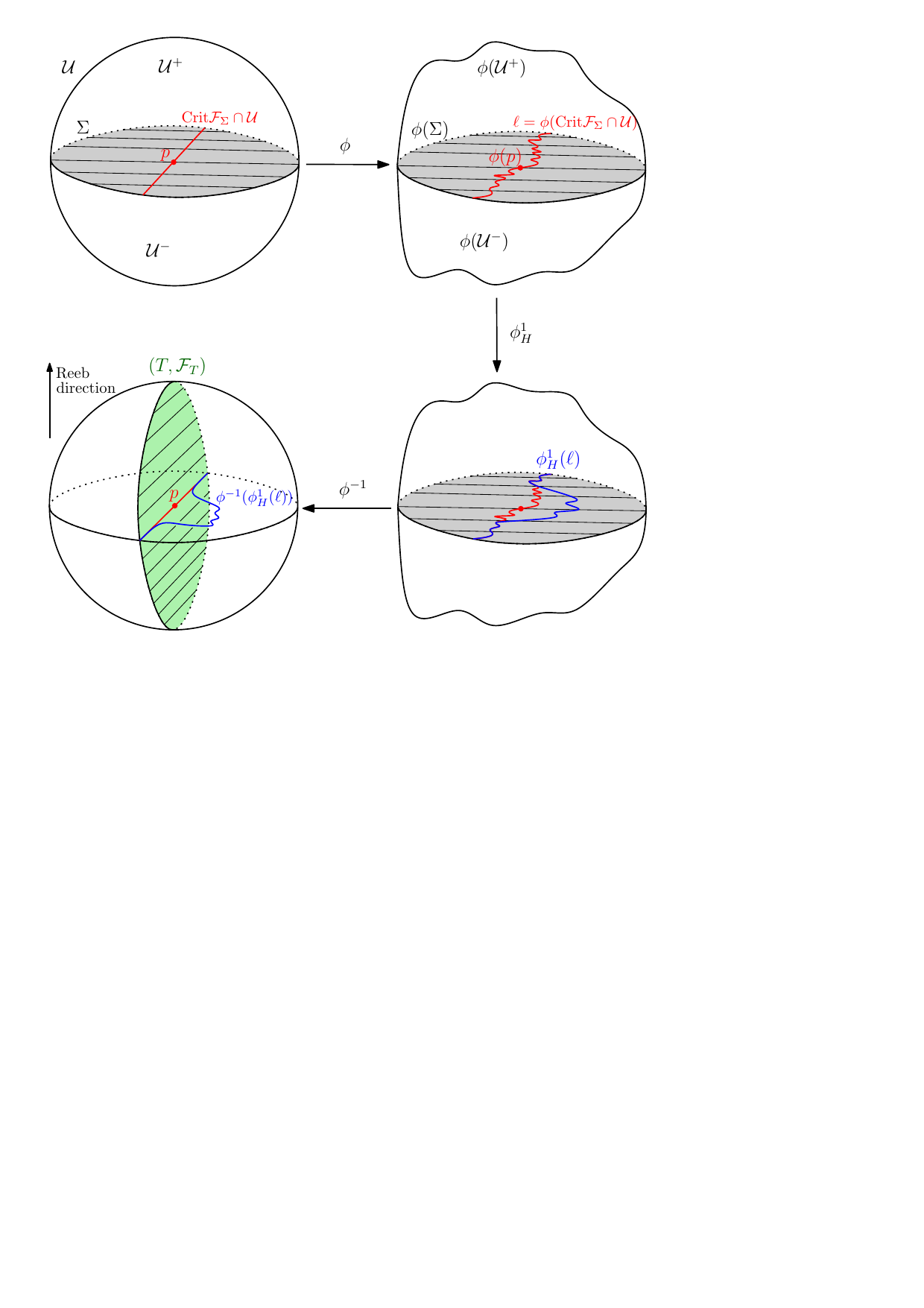}
    \caption{On the left, the Darboux ball around $p$ and on the right the image of this Darboux ball by $\phi$. At the bottom left the picture is viewed as in $\mathbb R \mathbb P^3$ and $T$ represents the Chekanov torus.}
    \label{fig:1D_Singularity}
\end{figure}

Since $\phi(p)$ is a regular point of the foliation $\mathcal F_{\phi(\Sigma)}$, we can identify $\phi(\mathcal U \cap \Sigma)$ with a plane foliated by parallel lines while $\ell \coloneqq \phi(\Crit(\mathcal F_\Sigma)\cap\,\mathcal U)$ is some topological submanifold, see Figure \ref{fig:1D_Singularity}. Moreover, from Proposition \ref{Prop_Regular_Foliation_Part}, each leaf of the characteristic foliation of $\phi(\Sigma \cap \mathcal U)$ corresponds under $\phi^{-1}$ to two leaves of the characteristic foliation of $\mathcal{U}$ glued along a point of $\Crit(\mathcal F_{\Sigma})\cap \mathcal U$. This means that each leaf in $\phi(\mathcal U)$ contains a unique point of $\ell$. In the rest of the proof we will use the fact that it is possible to displace a point from $\ell$, such that the image of $\ell$ is to the right, but not necessarily strictly (see Figure \ref{fig:1D_Singularity}). We will then use this fact to displace $\Crit(\mathcal F_\Sigma)$ from the Reeb direction, thus contradicting Givental's non-diplaceability result.

The plane $\Sigma$ splits the ball $\mathcal{U}$ in two parts that we denote $\mathcal U^-$ and $\mathcal U^+$. Then $\phi(\mathcal{U}\cap \Sigma)$ splits the ball $\phi(\mathcal{U})$ into two parts that we denote $\phi(\mathcal{U}^-)$ and $\phi(\mathcal{U}^+)$. Let $H \colon Y \to \mathbb R$ be a smooth Hamiltonian, compactly supported in $\phi(\mathcal U)$, such that $H\lvert_{\phi(\Sigma)}\equiv 0$, $H\lvert_{\phi(\mathcal U^-)} \leq 0$, $H\lvert_{\phi(\mathcal U ^+)} \geq 0$ and $dH_{\phi(p)}\neq 0$. Then, from the definition of the characteristic foliation the flow $\varphi_H^t$ follows the leaves of the foliation and every point goes in the same direction (not necessarily strictly). Without loss of generality, we may assume that every point goes to the right. This implies that there exists some open ball $\mathcal V \ni p$ such that $\varphi^1_H(\ell) \cap \mathcal V = \emptyset$. Then the isotopy $\psi^t= \phi^{-1} \varphi_H^t \phi \in \overline{\Cont}_{0,c}(\mathcal U)$ of contact homeomorphisms verifies 
\[\psi^1(\Crit(\mathcal F_{\Sigma})) \cap \phi^{-1}(\mathcal V) =\emptyset.\]
From the local model near 1-dimensional singularity, we can identify the 1-jet neighbourhood of $\Crit(\mathcal{F}_\Sigma)$ around $p$ with the 1-jet neighbourhood of $\widetilde{\gamma}$ (using the notation of the proof of Proposition \ref{Prop_Contact_Hammer}), the Clifford torus then becomes the trace of $\Crit(\mathcal{F}_\Sigma)$ under the Reeb flow. This torus is transverse to $\Sigma$ since the Reeb vector field is transverse to $\Sigma$ near $p$. We then run the same argument as in Proposition \ref{Prop_Contact_Hammer} in order to obtain a contradiction.
\end{proof}

\section{$C^0$-Flexibility of Convex Surfaces}

In this section, we prove Theorem \ref{thm.flexbility} by proving Theorem \ref{Thm_Convexity_Flexibility}. In Section \ref{subsec.Setup}, we describe the set-up. In particular we say which convex torus we send to a non-convex one. In Section \ref{subsec.pf_of_flexibility}, we prove Theorem \ref{Thm_Convexity_Flexibility} assuming the technical Proposition \ref{MainProposition} proven in the last section.

\subsection{Setup and Basic Properties}\label{subsec.Setup}

\noindent Let $\mathbb{T}(x,y) = (\mathbb{R}/\mathbb{Z})^2$ denote the flat torus, and consider the contact manifold 
\[
Y := \mathbb{T}(x,y) \times \mathbb{R}(r), \qquad \xi = \ker(dy - r\,dx).
\]
\noindent For any smooth function $f \colon \mathbb{S}^1 \to \mathbb{R}$, define an embedded surface
\[
S_f := \{(x, y, f(y)) \mid (x, y) \in \mathbb{T}(x,y)\} \subset Y.
\]

\noindent We identify $\mathbb{S}^1 = \mathbb{R}/\mathbb{Z}$ with the interval $[-\tfrac{1}{2}, \tfrac{1}{2})$ (with endpoints identified), and represent $\mathbb{T}(x,y)$ as the unit square $[-\tfrac{1}{2}, \tfrac{1}{2})^2$ with standard identifications.

\noindent Let $f_{\infty} \in C^\infty(\mathbb{S}^1)$ be a function satisfying:
\begin{itemize}
  \item $f_{\infty}(y) = -y^3$ for $y \in [-\tfrac{1}{4}, \tfrac{1}{4}]$,
  \item $f_{\infty}(-\tfrac{1}{2}) = 0$ and $f_{\infty}'(-\tfrac{1}{2}) \neq 0$,
  \item $f_{\infty}(y) > 0$ for $y \in (-\tfrac{1}{2}, 0)$ and $f_{\infty}(y) < 0$ for $y \in (0, \tfrac{1}{2})$.
\end{itemize}

\noindent Fix $0 < \varepsilon < \tfrac{1}{4}$. Define the subspace
\[
\mathcal{C}_{\varepsilon}(f_{\infty}) \subset C^{\infty}(\mathbb{S}^1)
\]
\noindent to consist of all functions $f$ satisfying:
\begin{enumerate}[label=(C\arabic*)]
  \item $f(y) = f_{\infty}(y)$ for $y \notin (-\varepsilon, \varepsilon)$, and $\max_{y \in [-\varepsilon, \varepsilon]} |f(y)| < \varepsilon$,
  \item $f(y) = -f(-y) < 0$ for all $y \in (0, \varepsilon)$,
  \item There exists $0 < \delta < \varepsilon$ such that $f(y) = -y$ for all $y \in (-\delta, \delta)$.
\end{enumerate}

\noindent Similarly, define a subspace
\[
\widetilde{\mathcal{C}}_{\varepsilon}(f_{\infty}) \subset C^{\infty}(\mathbb{S}^1)
\]
consisting of functions $f$ satisfying:
\begin{enumerate}
  \item[(C1)] and (C2) as above,
  \item[($\widetilde{\text{C3}}$)] There exists $0 < \delta < \varepsilon$ such that $f(y) = -y^3$ for all $y \in (-\delta, \delta)$.
\end{enumerate}

\begin{claim}
The surface $S_{f_{\infty}} \subset (Y, \xi = \ker \alpha)$ is not convex, whereas for every $0 < \varepsilon < \tfrac{1}{4}$ and every $f \in \mathcal{C}_{\varepsilon}(f_{\infty})$, the surface $S_f \subset (Y, \xi)$ is convex.
\end{claim}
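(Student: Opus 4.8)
\noindent The plan is to read off the characteristic foliation of $S_f$ explicitly and then invoke Giroux's convexity criterion \cite{Gi91}, together with the fact recalled in the introduction that every closed leaf of the characteristic foliation of a smooth convex surface is non-degenerate. Using the graph parametrization $(x,y)\mapsto(x,y,f(y))$ of $S_f$ and the orientation form $dx\wedge dy$, one computes $\alpha|_{S_f}=dy-f(y)\,dx$, so $\mathcal{F}_{S_f}$ is directed by
\[
    X_f=\partial_x+f(y)\,\partial_y .
\]
The $\partial_x$-component of $X_f$ is identically $1$, hence $X_f$ never vanishes, $\Crit(\mathcal{F}_{S_f})=\emptyset$, and the closed leaves are exactly the circles $\{y=y_0\}$ with $f(y_0)=0$; along such a leaf, parametrized by $x\in[0,1]$, the derivative of the Poincar\'e return map is $\exp\!\big(\int_0^1 f'(y_0)\,dx\big)=e^{f'(y_0)}$.

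\medskip

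\noindent To see that $S_{f_\infty}$ is not convex: since $f_\infty(y)=-y^3$ near $y=0$ we have $f_\infty(0)=0=f_\infty'(0)$, so $\{y=0\}$ is a closed leaf whose return map has derivative $e^0=1$, i.e.\ a degenerate closed leaf (one may even integrate $\dot y=-y^3$ to get the return map $y_0\mapsto y_0/\sqrt{1+2y_0^2}$). A convex surface admits no degenerate closed leaf, so $S_{f_\infty}$ is not convex.

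\medskip

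\noindent To see that $S_f$ is convex for every $f\in\mathcal{C}_\varepsilon(f_\infty)$, first identify the zero set of $f$. By (C1), $f$ agrees with $f_\infty$ on $\{|y|\ge\varepsilon\}$, where the only zero is $y=-\tfrac12$ (because $f_\infty>0$ on $(-\tfrac12,0)$ and $f_\infty<0$ on $(0,\tfrac12)$); by (C2) and (C3), on $(-\varepsilon,\varepsilon)$ the function $f$ is odd, equals $-y$ near $0$, and is negative on $(0,\varepsilon)$, so its only zero there is $y=0$. Hence $f$ has exactly the two simple zeros $0$, with $f'(0)=-1<0$ (an attracting circle), and $-\tfrac12$, with $f'(-\tfrac12)=f_\infty'(-\tfrac12)>0$ (a repelling circle). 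Consequently $\mathcal{F}_{S_f}$ has no singular points and exactly two closed leaves, both hyperbolic, and the leaves on the two complementary annuli spiral from the repeller to the attractor: it is a Morse--Smale singular foliation with no saddle connections, hence divided, so $S_f$ is convex. Concretely, a dividing multicurve is $\Gamma:=\{y=\tfrac14\}\sqcup\{y=-\tfrac14\}$, transverse to $X_f$ because $\varepsilon<\tfrac14$ forces $f(\pm\tfrac14)=f_\infty(\pm\tfrac14)=\mp\tfrac{1}{64}\neq 0$; letting $\Sigma_-$ be the component of $S_f\setminus\Gamma$ containing $\{y=0\}$ and $\Sigma_+$ the one containing $\{y=-\tfrac12\}$, taking $\omega=dx\wedge dy$ and $Y:=\big({-\sin(2\pi y)}/{f(y)}\big)\,X_f$ — which equals $g\,X_f$ for a smooth positive function $g$ since $-\sin(2\pi y)$ and $f$ have the same sign and the same simple zeros — one computes $\mathrm{div}_\omega Y=\big({-\sin(2\pi y)}\big)'=-2\pi\cos(2\pi y)$, which is negative on $\Sigma_-$ and positive on $\Sigma_+$, while $Y$ points out of $\Sigma_+$ along $\Gamma$; this is precisely the data exhibiting $\Gamma$ as a dividing set.

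\medskip

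\noindent The main obstacle is not any single computation — all of them are short — but assembling the appeal to Giroux's theory correctly: one must be sure that convexity of a closed surface is equivalent to its characteristic foliation being divided, that a Morse--Smale foliation with no retrograde saddle connections is divided (so that one genuinely obtains convexity of $S_f$ rather than a merely plausible picture), and that non-degeneracy of every closed leaf is a genuine necessary condition for convexity (this is what rules out $S_{f_\infty}$). One should also fix the sign conventions in the definition of a dividing set so that the roles of $\Sigma_\pm$ and the transverse direction of $Y$ along $\Gamma$ match.
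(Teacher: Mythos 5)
Your proposal is correct, and the computations (direction field $X_f=\partial_x+f(y)\,\partial_y$, return-map derivative $e^{f'(y_0)}$, the explicit $\dot y=-y^3$ integration) are all right. For the convexity of $S_f$ you take essentially the same route as the paper (identify the two simple zeros of $f$, conclude the foliation is Morse--Smale, invoke Giroux), and you usefully go one step further by exhibiting a dividing set $\{y=\pm\tfrac14\}$ together with the explicit divergence-positive vector field $Y$.

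Where you diverge from the paper is the non-convexity of $S_{f_\infty}$. You appeal to the statement, recalled in the introduction, that a convex surface has no degenerate closed leaf, and then observe that $\{y=0\}$ is a degenerate closed leaf. The paper instead argues directly: it assumes there is a convexity function $u$ with $u\,d\beta+\beta\wedge du>0$, restricts this inequality to the circle $\{y=0\}$ where $f_\infty$ and $f_\infty'$ both vanish, and obtains $\partial u/\partial x<0$ along a closed curve, which contradicts periodicity. That argument is completely self-contained and needs no black box. Your version is shorter and more conceptual, but it rests on the non-degeneracy criterion, which you flag yourself as needing justification but do not supply. To make your route watertight you would want to include the short verification: if $\Gamma$ divides $\mathcal{F}_{S_{f_\infty}}$ with $(\omega,Y)$ and a closed leaf $\gamma\subset\Sigma_+$ is parametrized so that $\omega=ds\wedge du$ and $Y=A(s)\partial_s+\cdots$ with $A>0$, then $\mathrm{div}_\omega Y>0$ along $\gamma$ gives $A'+B>0$ with $B=\partial_u(Y\cdot\partial_u)|_{u=0}$, hence $\log P'(0)=\int_0^1 B/A\,ds>-\int_0^1 A'/A\,ds=0$, so $P'(0)>1$; this is exactly what fails at $e^{f_\infty'(0)}=1$. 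Including such a lemma (or citing the precise place in Giroux's paper where it appears) would put your argument on the same footing as the paper's.
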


\begin{proof}
Let $i \colon \mathbb{T} \hookrightarrow Y$, $i(x,y) = (x,y,f_{\infty}(y))$, be the parametrization of $S_{f_{\infty}}$, and write $\beta := i^*\alpha = dy - f_{\infty}(y)\,dx$. Assume for contradiction that $S_{f_{\infty}}$ is convex. Then there exists a function $u \in C^{\infty}(\mathbb{T})$ such that
\[
ud\beta + \beta \wedge du 
= \left(u\,f_{\infty}'(y) - \frac{\partial u}{\partial x} - f_{\infty}(y)\,\frac{\partial u}{\partial y}\right)\,dx\wedge dy > 0.
\]
\noindent Restricting to the circle $C = \{(x,0)\mid x \in \mathbb{S}^1\}$ where $f_{\infty}(0) = 0$ and $f_{\infty}'(0) = 0$, the inequality becomes $\partial u/\partial x < 0$ along $C$, which is impossible by periodicity.\\

\noindent Now let $f \in \mathcal{C}_{\varepsilon}(f_{\infty})$ with $0 < \varepsilon < \tfrac{1}{4}$. The characteristic foliation $(S_f)_\xi$ is nonsingular and has exactly two periodic orbits: $C_1 = \mathbb{S}^1 \times \{0\}$ and $C_2 = \mathbb{S}^1 \times \{-\tfrac{1}{2}\}$. Since $f'(0) \neq 0$ and $f'(-\tfrac{1}{2}) \neq 0$, both orbits have non-degenerate Poincaré return maps. Hence the foliation is almost Morse–Smale (since the characteristic foliation has no critical points, being Morse–Smale is equivalent to requiring all closed orbits to be non-degenerate). By a theorem of Giroux \cite{Gi91}, it follows that the surface $S_f$ is convex.
\end{proof}

\begin{thm}\label{Thm_Convexity_Flexibility}
    For all $0<\varepsilon<\frac{1}{4}$ and $f\in\mathcal{C}_{\varepsilon}(f_{\infty})$, there exists a contact homeomorphism $\Phi:Y\rightarrow Y$ that maps a convex surface $S_{f}$ to a non-convex surface $S_{f_{\infty}}$.
\end{thm}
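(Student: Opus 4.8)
The plan is to realize $\Phi$ as a $C^0$--limit of an infinite composition of contactomorphisms $\sigma_k$, where $\sigma_k$ is supported in a thin annular neighbourhood of the closed leaf $C_1=\mathbb S^1\times\{0\}\subset S_f$ and carries $S_{f_{k-1}}$ onto $S_{f_k}$ for a sequence of functions $f=f_0,f_1,f_2,\dots$ that agree off shrinking neighbourhoods of $0$ and converge uniformly to $f_\infty$. Concretely, I would fix a decreasing sequence $\varepsilon>a_1>a_2>\cdots\to 0$, set $A_k=\{\,a_k\le|y|\le a_{k-1}\,\}\subset\mathbb T$ (with $a_0=\varepsilon$), and let $f_k$ coincide with $f_{k-1}$ outside $A_k$ and with $f_\infty$ (hence with $-y^{3}$ for $A_k$ close enough to $0$) on $A_k$, so that after $k$ steps $f_k$ equals $f_\infty$ on $\{|y|\ge a_k\}$ and equals $f$ on $\{|y|<a_k\}$. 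Since $f$ vanishes to first order at $0$, this forces $\|f_k-f_\infty\|_{C^0}\to 0$, and hence $S_{f_k}\to S_{f_\infty}$ in the Hausdorff metric; the functions $f_k$ sweep through the spaces $\mathcal C_{\varepsilon}(f_\infty)$ and $\widetilde{\mathcal C}_{\varepsilon'}(f_\infty)$ introduced above, which is what Proposition~\ref{MainProposition} is for.

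The crux is producing, for each $k$, the contactomorphism $\sigma_k$ with $\sigma_k(S_{f_{k-1}})=S_{f_k}$, supported near $A_k$ and with $d_{C^0}(\sigma_k,\mathrm{Id})$ as small as $\operatorname{diam}A_k$. By the relative, $C^0$--controlled form of Giroux's neighbourhood theorem (Theorem~\ref{TheoremIsotopyExtension} together with Proposition~\ref{PropositionPSI}), applied with $S=\mathbb T$ and $V=\mathbb T\setminus A_k$, such a $\sigma_k$ exists precisely when the characteristic foliations of $S_{f_{k-1}}$ and $S_{f_k}$ are conjugate over $A_k$ \emph{rel the two boundary circles}. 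The characteristic foliation of $S_g$ over $A_k$ is swept by the flow of $\partial_x+g(y)\partial_y$, whose holonomy from $\{|y|=a_{k-1}\}$ to $\{|y|=a_k\}$ is a rotation by $\int_{a_k}^{a_{k-1}}\frac{dy}{-g(y)}$, and two such foliations are conjugate rel boundary exactly when these rotation numbers agree modulo $1$. So the whole game is to choose the widths $a_k$ so that, with $f_{k-1}$ equal to $-y$ on $A_k$ and $f_k$ equal to $-y^{3}$ there (the generic situation for large $k$), one has $\log(a_{k-1}/a_k)\equiv\tfrac12(a_k^{-2}-a_{k-1}^{-2})\pmod 1$; since the right-hand side is unbounded as $a_k\to 0$ it hits every residue class repeatedly, so this equation has solutions with $a_k$ arbitrarily small, and the same device handles the first few indices where $f_{k-1}$ is only in $\mathcal C_\varepsilon(f_\infty)$. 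Once this holonomy matching is arranged, writing the interpolating isotopy as an $\varepsilon$--PSI by cutting $A_k$ into finitely many pieces of diameter $\le\operatorname{diam}A_k$ and invoking Proposition~\ref{PropositionPSI} yields $\sigma_k$ with $d_{C^0}(\sigma_k,\mathrm{Id})\le\operatorname{diam}A_k$; choosing the $a_k$ to also decay geometrically makes $\sum_k d_{C^0}(\sigma_k,\mathrm{Id})<\infty$ and the $A_k$ pairwise disjoint.

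It then remains to assemble the limit. With $\psi_n=\sigma_n\circ\cdots\circ\sigma_1$, summability of $d_{C^0}(\psi_{n+1},\psi_n)=d_{C^0}(\sigma_{n+1},\mathrm{Id})$ gives uniform convergence $\psi_n\to\Phi$ to a continuous map, and since the $\sigma_k$ have pairwise disjoint supports and can be taken uniformly bi-Lipschitz the inverses $\psi_n^{-1}$ converge uniformly as well, so $\Phi$ is a homeomorphism of $Y$; being a uniform limit of contactomorphisms (via a diagonal argument if the $\sigma_k$ are themselves only obtained as $C^0$--limits), $\Phi\in\overline{\mathrm{Cont}}(Y,\xi)$. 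Finally $\psi_n(S_f)=S_{f_n}\to S_{f_\infty}$ in the Hausdorff metric while $\psi_n\to\Phi$ uniformly, so $\Phi(S_f)\subseteq S_{f_\infty}$; as $\Phi$ is a homeomorphism and $S_f,S_{f_\infty}$ are smoothly embedded tori, invariance of domain upgrades this to $\Phi(S_f)=S_{f_\infty}$. Since $S_f$ is convex and $S_{f_\infty}$ is not, by the Claim preceding the statement, this proves Theorem~\ref{Thm_Convexity_Flexibility}.

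The hard part is entirely the bookkeeping inside Proposition~\ref{MainProposition}: one must simultaneously push the defining function all the way to $f_\infty$, keep at each step the rotation number of the transition map over the modified annulus invariant modulo $1$ so that the step is realized by an honest compactly supported contactomorphism rather than an abstract limit, and keep that contactomorphism $C^0$--small by localizing in pieces the size of the annulus — and these three constraints pull against each other, which is exactly why the normalizations $f=-y$ and $f=-y^{3}$ near $0$ and the oddness condition (C2) are imposed. A secondary technical point is making the infinite composition genuinely a homeomorphism, for which the disjointness of supports and a uniform bi-Lipschitz bound on the $\sigma_k$ are the convenient tools.
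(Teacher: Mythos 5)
Your overall architecture matches the paper's: construct a sequence of smooth functions $f=f_0,f_1,\dots\to f_\infty$ that agree with $f_\infty$ away from shrinking neighbourhoods of $C_1$, realize each step $S_{f_{k-1}}\to S_{f_k}$ by a compactly supported contactomorphism $\sigma_k$ whose $C^0$-size is summable, use the PSI machinery (Theorem~\ref{TheoremIsotopyExtension} and Proposition~\ref{PropositionPSI}) to control that size, pass to the $C^0$-limit, and check that the limit is a homeomorphism. Your injectivity and limit discussion is a reasonable alternative to the paper's nested-neighbourhood argument, though the uniform bi-Lipschitz bound you invoke is not automatic and would need an argument.

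However, the key step in your proposal has a genuine gap. You want each $\sigma_k$ to be supported in a thin annular \emph{shell} $A_k=\{a_k\le|y|\le a_{k-1}\}$ that avoids the closed leaf $C_1=\{y=0\}$, and you want the characteristic foliations of $S_{f_{k-1}}$ and $S_{f_k}$ over $A_k$ to be conjugate rel the two boundary circles. You correctly identify the holonomy from $\{|y|=a_{k-1}\}$ to $\{|y|=a_k\}$, but the modular condition $\log(a_{k-1}/a_k)\equiv\tfrac12(a_k^{-2}-a_{k-1}^{-2})\pmod 1$ is not sufficient. If the holonomies agree only modulo $1$ and differ by a nonzero integer $n$, the conjugating diffeomorphism of $A_k$ that fixes $\partial A_k$ is an $n$-fold Dehn twist, and any isotopy of embeddings $j_t$ realizing it (rel $\mathbb{T}\setminus A_k$) must move points of $A_k$ essentially all the way around $\mathbb S^1$, so $d_{C^0}(\sigma_k,\mathrm{Id})\gtrsim 1/2$ regardless of how thin $A_k$ is or how finely you cut it. The PSI decomposition cannot rescue this, since the global twisting is not localizable in two stages of small disjoint balls. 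Exact agreement of the holonomies (so $n=0$) is impossible here: since $1/y^3>1/y$ for $0<y<1$, one always has $\tfrac12(a_k^{-2}-a_{k-1}^{-2})>\log(a_{k-1}/a_k)$ strictly whenever $a_k<a_{k-1}$, so $n\ge 1$ and in fact $n\to\infty$ as $a_k\to 0$. (A secondary issue: as written, $f_k$ is required to equal $f$ on $\{|y|<a_k\}$ and $f_\infty=-y^3$ on $A_k$, giving a jump of size $\sim a_k$ at $|y|=a_k$; smoothing this is possible but must be accounted for in the holonomy computation.)

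The paper avoids the holonomy obstruction entirely by making each step supported on a full collar $\{|y|<\varepsilon_i\}$ \emph{containing} the closed leaf $C_1$, not a shell avoiding it. Because both foliations spiral infinitely into $C_1$, there is no rotation-number invariant rel boundary: the interpolating torus map $\phi^\tau$ in Proposition~\ref{MainProposition} can be chosen to fix the $x$-coordinate (see equation~\eqref{phi^t}), it is the identity on $C_1\cup C_2$, and the $C^0$-bound comes from the $3\sqrt{\varepsilon}$ bound on the slope of the mixed foliations $\mathcal F_\tau$ combined with the checkerboard sets $T_0,T_1$ that make the family a $50\sqrt{\varepsilon}$-PSI. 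This is the essential structural point your proposal misses, and it is precisely why the constraints you observe "pull against each other" cannot be reconciled with shells disjoint from $C_1$.
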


\subsection{Proof of Theorem \ref{Thm_Convexity_Flexibility}}\label{subsec.pf_of_flexibility}

\noindent Let $Y=\mathcal{U}_1\supset \mathcal{U}_2\supset\ldots\supset\mathrm{Im}\,S_f$ be a decreasing sequence of open sets such that
\[\bigcap_{i\geq 1}\mathcal{U}_i=\mathrm{Im}\,S_f.\]
\noindent We will inductively construct a sequence of contactomorphisms $\{\Psi_i\}_{i=1}^{\infty}$ such that:
\begin{enumerate}[label=$(\mathcal{I}\arabic*)$]
    \item $\mathrm{supp}\,\Psi_i\subset\Phi_{i-1}(\mathcal{U}_i)$, $\Phi_0=\mathrm{Id}$ and $\Phi_i=\Psi_{i}\circ\cdots\circ\Psi_1$,
    \item $\Phi_i\circ S_f=S_{f_i}$, for a sequence $\{f_i\}_{i=1}^{\infty}\in\mathcal{C}_{\varepsilon}(f_{\infty})$ with $f_i\xrightarrow[i\to\infty]{C^0} f_{\infty}$,
    \item $d_{C^0}(\Psi_i,\mathrm{Id})<c/2^i$ for some universal constant $c>0$.
\end{enumerate}

\noindent We now explain how to conclude the proof of the Theorem \ref{Thm_Convexity_Flexibility}, provided we completed the inductive construction. From $(\mathcal{I}3)$, the sequence $\{\Phi_i\}_{i=0}^\infty$ is Cauchy in the $C^0$-topology and thus $C^0$-converges to a continuous map $\Phi: Y \to Y$. By $(\mathcal{I}2)$, $\Phi$ satisfies $\Phi \circ S_f = S_{f_\infty}$. To prove $\Phi$ is a homeomorphism, we show injectivity. Suppose $\Phi(p) = \Phi(q)$ for $p,q \in Y$:

\begin{itemize}
    \item \underline{$p,q \in \mathrm{Im}\,S_f$:} Since $\Phi|_{\mathrm{Im}\,S_f}$ is injective by $\Phi \circ S_f = S_{f_\infty}$, we have $p = q$.
    
    \item \underline{$p,q \in Y \setminus \mathrm{Im}\,S_f$:} For large $i$, $p,q \notin \mathcal{U}_i$, so by $(\mathcal{I}3)$, $\Phi_i(p) = \Phi(p)$ and $\Phi_i(q) = \Phi(q)$. As $\Phi_i$ is a diffeomorphism, $p = q$.
    
    \item \underline{$p \in \mathrm{Im}\,S_f$, $q \notin \mathrm{Im}\,S_f$:} For large $i$, $q \notin \overline{\mathcal{U}_i}$ and $\Phi_i(q) = \Phi(q) \notin \Phi_i(\mathcal{U}_i)$. However, $(\mathcal{I}1)$ implies $\Phi_j(p) \in \Phi_i(\overline{\mathcal{U}_i})$ for $j \geq i$, so $\Phi(p) \in \Phi_i(\overline{\mathcal{U}_i})$, a contradiction.
\end{itemize}

\noindent Thus, $\Phi$ is a homeomorphism that is a $C^0$-limit of a sequence of contactomorphisms, therefore $\Phi$ is a contact homeomorphism which moreover satisfies $\Phi\circ S_f=S_{f_{\infty}}$.\\

\noindent Let us show that we can construct a sequence with properties $(\mathcal{I}1-3)$ which satisfies additional technical conditions:

\begin{enumerate}[label=(\arabic*)]
    \item $f_i\in C^{\infty}_{\varepsilon_i}(f_{\infty})$ for a sequence $\{\varepsilon_i>0\}$ with $\varepsilon_i<C/4^{i}$,
    \item $|f_i(x)|>|f_{\infty}(x)|$ for $0<|x|<\varepsilon_i$,
    \item $\{(x,y,r)\mid\min\{f_{\infty}(y),f_i(y)\}\leq r\leq\max\{f_{\infty}(y),f_i(y)\}\}\subset\Phi_{i-1}(\mathcal{U}_i)$.
\end{enumerate}

\noindent Assume we have constructed the sequences up to the index $i$. Consider the set
\[\mathcal{U}:=\Phi_{i}(\mathcal{U}_{i+2})\supset\mathrm{Im}\, S_{f_i}.\]

\noindent For $f,g\in C^{\infty}(\mathbb{S}^1)$, define the interpolating set between $S_f$ and $S_g$:
\begin{equation}\label{Sigma_Surface}
\begin{aligned}
\Sigma_{\varepsilon}(f,g)
&= \bigcup_{\lambda \in [0,1]} S_{(1-\lambda)f + \lambda g} \cap \{|y| < \varepsilon\} \\
&= \{(x,y,r) \mid x \in \mathbb{S}^1,\, |y| < \varepsilon,\, 
   \min\{f(y),g(y)\} \leq r \leq \max\{f(y),g(y)\} \}.
\end{aligned}
\end{equation}

\begin{claim}
There exists a contactomorphism 
$\Psi'_{i+1}$, supported in $Op\big(\Sigma_{\varepsilon_i}(f_i,f_{\infty})\big)$, 
that fixes $S_{f_i}$ pointwise, has $C^0$–norm at most $C/2^i$, and satisfies
\[
\Psi'_{i+1}(\mathcal{U}) \supset \Sigma_{\varepsilon_i}(f_i,f_{\infty}).
\]
\end{claim}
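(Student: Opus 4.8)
The plan is: first reduce the statement about the open set $\mathcal U$ to one about a fixed tubular neighbourhood of $S_{f_i}$ by compactness; then build $\Psi'_{i+1}$ as an explicit contact Hamiltonian flow that fixes $S_{f_i}$ pointwise and inflates that tube over $\Sigma_{\varepsilon_i}(f_i,f_\infty)$, the delicate part being to keep the winding around the closed Legendrian leaf $C_1$ — hence the $C^0$-norm — under control. Since $\mathrm{Im}\,S_{f_i}$ is compact and $\mathcal U=\Phi_i(\mathcal U_{i+2})\supset\mathrm{Im}\,S_{f_i}$ is open, fix $\eta\in(0,\varepsilon_i)$, which we may take $\le\delta_i$ (so $f_i(y)=-y$ on $[-\eta,\eta]$ by (C3)), with
\[
N_\eta:=\{(x,y,r)\mid |r-f_i(y)|\le\eta\}\subset\mathcal U .
\]
It then suffices to produce $\Psi'_{i+1}\in\Cont(Y,\xi)$ with the three structural properties and $\Psi'_{i+1}(N_\eta)\supset\Sigma_{\varepsilon_i}(f_i,f_\infty)$, since $\Psi'_{i+1}(\mathcal U)\supseteq\Psi'_{i+1}(N_\eta)$.

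Recall the shape of the target. Because $f_i=f_\infty$ off $(-\varepsilon_i,\varepsilon_i)$, $f_i(y)=-y$ and $f_\infty(y)=-y^3$ near $0$, and $|f_i|>|f_\infty|$ on $(0,\varepsilon_i)$ by (2), the region $\Sigma_{\varepsilon_i}(f_i,f_\infty)$ lies between the graphs of $f_i$ and $f_\infty$ over $\{|y|<\varepsilon_i\}$; it is pinched along the closed characteristic leaf $C_1=\mathbb S^1\times\{0\}\times\{0\}$ (which is Legendrian, as $\alpha(\partial_x)=-r$ vanishes there) and along $\{|y|=\varepsilon_i\}$; its transverse width $|f_i(y)-f_\infty(y)|$ is $\le|y|$ for small $y$ and stays $<2\varepsilon_i$ throughout; and over $\{y>0\}$ (resp.\ $\{y<0\}$) it lies on the side $\{r>f_i(y)\}$ (resp.\ $\{r<f_i(y)\}$) of $S_{f_i}$. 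In particular $\Sigma_{\varepsilon_i}(f_i,f_\infty)\cap\{|y|\le\eta\}\subset N_\eta$ already, so inflation is only needed over the annular region $\{\eta\le|y|\le\varepsilon_i\}$.

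For the construction I would take $\Psi'_{i+1}$ to be the time-$T$ map of the contact flow of a Hamiltonian $H=\chi\cdot(r-f_i(y))^2$, with $\chi\in C^\infty(Y)$ supported in a small neighbourhood of $\Sigma_{\varepsilon_i}(f_i,f_\infty)$, vanishing on $\{|y|\le\eta\}$, and of the sign making the flow push the bowtie-side of $S_{f_i}$ outward. Since $H$ vanishes to second order along $S_{f_i}$, its contact vector field $X_H$ vanishes identically on $S_{f_i}$, so the flow fixes $S_{f_i}$ pointwise, and $\mathrm{supp}\,\Psi'_{i+1}\subset\mathrm{supp}\,\chi\subset Op(\Sigma_{\varepsilon_i}(f_i,f_\infty))$. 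The key design choice is over the annulus $\{\eta\le|y|\le\varepsilon_i\}$: there the characteristic leaves of $S_{f_i}$ spiral toward $C_1$ and wind $\sim\log(\varepsilon_i/\eta)$ times around $\mathbb S^1_x$, so one chooses $\chi$ to grow rapidly (at a large rate $\lambda$) along those leaves; then the flow carries a point of $\partial N_\eta$ transversally from distance $\eta$ out to distance $\sim\varepsilon_i$ from $S_{f_i}$ while travelling only an $O(\log(\varepsilon_i/\eta)/\lambda)$-long arc of its leaf, hence only $\lesssim\varepsilon_i$ in the $x$-direction if $\lambda$ is taken large enough. Choosing the magnitude of $\chi$ and the time $T$ so that this transverse push exceeds $|f_i(y)-f_\infty(y)|$ over the annulus yields $\Psi'_{i+1}(N_\eta)\supset\Sigma_{\varepsilon_i}(f_i,f_\infty)$, while over $\{|y|\le\eta\}$ nothing moves and the target there is already in $N_\eta$. (Alternatively, this is exactly the kind of $C^0$-controlled, compactly supported contact deformation packaged by the technical Proposition~\ref{MainProposition}.)

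The main obstacle is the $C^0$-bound $d_{C^0}(\Psi'_{i+1},\mathrm{Id})\le C/2^i$. The $(y,r)$-displacement is $O(\varepsilon_i)$ because everything happens inside an $O(\varepsilon_i)$-neighbourhood of $S_{f_i}$ over $\{|y|<\varepsilon_i\}$, and $\varepsilon_i<C/4^i$ by (1). The genuinely delicate point is the $x$-displacement: a contact Hamiltonian that fixes $S_{f_i}$ pointwise must vanish to second order there, so $X_H$ is quadratically small near $S_{f_i}$, the flow is slow, and a careless choice of $\chi$ produces an $x$-drift of order $\log(\varepsilon_i/\eta)$ — not controlled as $\mathcal U$, hence $\eta$, shrinks. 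What rescues the estimate is precisely the pinching of $\Sigma_{\varepsilon_i}(f_i,f_\infty)$ along $C_1$ (nothing needs to move near $C_1$) together with the rapid-growth choice of $\chi$ along the leaves, which makes the inflation "transverse along one short arc" rather than spread over the logarithmically many windings; with this, the $x$-drift, and hence the full $C^0$-norm, is bounded by a fixed multiple of $\varepsilon_i<C/2^i$. Getting this winding estimate right — controlling the interplay between the slowness forced by the second-order vanishing and the spiralling of the characteristic foliation near the degenerating closed leaf $C_1$ — is where the real work lies.
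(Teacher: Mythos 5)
Your approach is genuinely different from the paper's, and I believe it contains a gap that is not easily repaired. The paper does not attempt a direct explicit Hamiltonian construction. Instead, it picks an auxiliary function $g \in \widetilde{\mathcal{C}}_{\varepsilon'_i}(f_\infty)$ that is $C^0$--close enough to $f_i$ that $\Sigma_{\varepsilon_i}(f_i,g)\subset\mathcal U$ and with $|f_i|>|g|>|f_\infty|$ on the relevant annulus, applies Proposition~\ref{MainProposition} to the pair $(f_\infty,g)$ to obtain a $50\sqrt{\varepsilon_i}$--PSI isotopy sending $S_g$ to $S_{f_\infty}$, and finally cuts off the generating Hamiltonian so that $S_{f_i}$ is fixed. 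The inclusion $\Psi'_{i+1}(\mathcal U)\supset\Sigma_{\varepsilon_i}(f_i,f_\infty)$ then follows because $\Psi'_{i+1}$ fixes $S_{f_i}$, sends $S_g$ to $S_{f_\infty}$, and $\Sigma_{\varepsilon_i}(f_i,g)\subset\mathcal U$. The $C^0$--bound is inherited from Proposition~\ref{MainProposition} and is uniform in the thinness of $\mathcal U$ precisely because the PSI structure alternates the support of the generating Hamiltonian between two families of disjoint small balls; this is the mechanism that defeats the winding problem.

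The gap in your proposal is in the $x$--drift estimate, which you correctly identify as the delicate point but do not actually close. You propose to choose $\chi$ ``growing rapidly at rate $\lambda$ along the leaves.'' But the projected leaves of $\mathcal F_{S_{f_i}}$ in the annulus $\{\eta\le |y|\le\varepsilon_i\}$ wind around $\mathbb S^1_x$ roughly $\log(\varepsilon_i/\eta)$ times, and $\chi$ is a \emph{periodic} function of $x$; it cannot grow monotonically through even one full winding, let alone $\log(\varepsilon_i/\eta)$ of them. Concretely, if one sets $\partial_x\chi/\chi\approx\lambda$ on part of $\mathbb S^1_x$, then $\chi$ must decrease equally hard somewhere else on the circle. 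Tracking $\tfrac{du}{dx}\approx-\tfrac{1}{2}(\partial_x\chi/\chi - f_i')u$ along a trajectory, the flow pushes $u=r-f_i(y)$ outward where $\chi$ increases and \emph{inward} where $\chi$ decreases. So the image of the thin tube $N_\eta$ under a single autonomous flow bulges out on one side of the circle and pinches back on the other, and $\Psi'_{i+1}(N_\eta)$ fails to cover $\Sigma_{\varepsilon_i}(f_i,f_\infty)$ over the portion of $\mathbb S^1_x$ where $\chi$ decays. Taking $\chi$ constant in $x$ does yield a uniform inflation, but then, as you computed, the $x$--drift is $\sim\log(\varepsilon_i/\eta)$, which is unbounded as $\eta\to 0$ (equivalently as $\mathcal U$ shrinks towards $S_{f_i}$), and no choice of $T$ or overall amplitude removes this. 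A single time--independent contact Hamiltonian vanishing to second order along $S_{f_i}$ therefore cannot simultaneously achieve uniform inflation and a $C^0$--bound independent of $\eta$; some version of the piecewise--supported decomposition (or equivalently, a reduction to Proposition~\ref{MainProposition}, which you mention only parenthetically as an alternative) seems unavoidable.
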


\begin{proof}
\noindent Let us fix some positive number $0<\varepsilon_i'< \varepsilon_i$. Let $g \in \widetilde{\mathcal{C}}_{\varepsilon_i'}(f_{\infty})$ be such that 
\[
\Sigma_{\varepsilon_i}(f_i,g) \subset \mathcal{U}
\]
(this can be arranged by taking $g$ $C^0$–close to $f_i$) and 
\[
\forall\, \delta<|x|<\varepsilon_i' \quad |f_i(x)| > |g(x)| > |f_{\infty}(x)|.
\]

\noindent By Proposition~\ref{MainProposition}, applied to 
$f_{\infty}, g \in \widetilde{\mathcal{C}}_{\varepsilon_i}(f_{\infty})$, there exists a contactomorphism $\Phi^1$, the time–$1$ map of a $50\sqrt{\varepsilon_i}$–PSI isotopy $\{\Phi^\tau\}$ supported in $Op\big(\Sigma_{\varepsilon_i}(f_{\infty},g)\big)$, such that $\Phi^1(S_g) = S_{f_{\infty}}$ and $d_{C^0}(\Phi^1,\mathrm{Id}) < 100\sqrt{\varepsilon_i}$. Moreover, we may assume $\Phi^{\tau}|_{S_{f_i}} = \mathrm{Id}$, since $\Phi^{\tau}$ is generated by a Hamiltonian $H_{\tau} = \alpha(\partial_{\tau}\Phi^{\tau})$ that can be cut off so that its flow fixes $S_{f_i}$. Finally, set $\Psi'_{i+1} := \Phi^1$.
\end{proof}

\noindent Applying the previous claim yields $\Psi'_{i+1}$ with the desired properties. Let $\varepsilon_{i+1} < C/4^{i+1}$, and choose
\[f_{i+1} \in \mathcal{C}_{\varepsilon_{i+1}}(f_{\infty}) \subset \mathcal{C}_{\varepsilon_i}(f_{\infty})\]
\noindent such that $|f_i(x)|>|f_{i+1}(x)| > |f_{\infty}(x)|$ for $\delta' < |x| < \varepsilon_{i+1}$. We then apply Proposition~\ref{MainProposition} to the pair 
$f_i, f_{i+1} \in \mathcal{C}_{\varepsilon_i}(f_{\infty})$ to get a contactomorphism $\Psi''_{i+1}$ with the properties listed in the Proposition~\ref{MainProposition}. Finally, define
\[\Psi_{i+1}=\Psi_{i+1}''\circ\Psi_{i+1}'.\]
\noindent The triangle inequality implies that $\Psi_{i+1}$ satisfies $(\mathcal{I}3)$. The properties $(\mathcal{I}2)$, as well as (1) and (2), hold immediately by construction. Property $(\mathcal{I}1)$ follows from property (3). To see property (3), note that $\Psi'_{i+1}(\mathcal{U}) \supset \Sigma_{\varepsilon_i}(f_i,f_{\infty})$, and that $\Psi''_{i+1}$ is supported inside $Op(\Sigma_{\varepsilon_i}(f_i,f_{i+1}))$. This completes the induction step.

\begin{prop}\label{MainProposition}
Let $0 < \varepsilon < \tfrac{1}{4}$, and let $f_0, f_1 \in \mathcal{C}_{\varepsilon}(f_{\infty})$ or $f_0, f_1 \in \widetilde{\mathcal{C}}_{\varepsilon}(f_{\infty})$.  
Then there exists a contactomorphism $\Phi \in \mathrm{Cont}_c(Y, \ker\alpha)$ such that:
\begin{itemize}
    \item $\Phi(S_{f_0}) = S_{f_1}$,
    \item $\Phi$ is supported inside a neighbourhood $Op(\Sigma_{\varepsilon}(f_0, f_1))$,
    \item $d_{C^0}(\Phi, \mathrm{Id}) < 100\sqrt{\varepsilon}$, and $\Phi$ is the time-1 map of a contact $50\sqrt{\varepsilon}$–PSI isotopy $\{\Phi^{\tau}\}$.
\end{itemize}
\end{prop}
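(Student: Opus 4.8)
The plan is to reduce the statement to Proposition~\ref{PropositionPSI}. It suffices to construct a smooth family of embeddings $\phi^{\tau}\colon\mathbb{T}\hookrightarrow Y$, $\tau\in[0,1]$, forming a $50\sqrt{\varepsilon}$–PSI in the sense of Definition~\ref{epsilonPSI}, such that $\phi^{0}$ is the standard parametrisation $(x,y)\mapsto(x,y,f_{0}(y))$ of $S_{f_{0}}$, the image of $\phi^{1}$ is $S_{f_{1}}$, every $\phi^{\tau}$ induces the same characteristic foliation on $\mathbb{T}$, and $\bigcup_{\tau}\mathrm{Im}\,\phi^{\tau}\subset\overline{\Sigma_{\varepsilon}(f_{0},f_{1})}$, where $\Sigma_\varepsilon$ is as in \eqref{Sigma_Surface}. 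Indeed Proposition~\ref{PropositionPSI} then yields a contact isotopy $\{\Phi^{\tau}\}$ with $\Phi^{\tau}\circ\phi^{0}=\phi^{\tau}$, $d_{C^{0}}(\Phi^{\tau},\mathrm{Id})<100\sqrt{\varepsilon}$, and $\mathrm{supp}\,\Phi^{\tau}\subset Op(\mathrm{supp}\,\phi^{\tau})\subset Op(\Sigma_{\varepsilon}(f_{0},f_{1}))$, so $\Phi:=\Phi^{1}$ is the required contactomorphism.

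First I would localise. Since $f_{0},f_{1}$ both lie in $\mathcal{C}_{\varepsilon}(f_{\infty})$ (or both in $\widetilde{\mathcal{C}}_{\varepsilon}(f_{\infty})$), by (C1) and (C3)/($\widetilde{\text{C3}}$) they agree outside $\{|y|<\varepsilon\}$ and on some $\{|y|<\delta\}$, so $f_{0}-f_{1}$ is supported in $\{\delta\le|y|\le\varepsilon\}$; both are odd by (C2), so it is enough to work over $\{0\le y\le\varepsilon\}$ and extend by $y\mapsto -y$. There $f_{0},f_{1}<0$, so the characteristic foliation $\dot x=1,\ \dot y=f(y)$ of $S_{f}$ has no singular point and no closed orbit, and its leaves are graphs over $\mathbb{S}^{1}_{x}$ of slope $f(y)$, of magnitude $<\varepsilon$ by (C1). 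A direct computation shows that the embeddings
\[
\phi^{\tau}_{\mathrm{naive}}(x,y)=\Bigl(x+\tau\,h(y),\ y,\ \tfrac{f_{0}(y)f_{1}(y)}{(1-\tau)f_{1}(y)+\tau f_{0}(y)}\Bigr),\qquad h'(y)=\tfrac{1}{f_{1}(y)}-\tfrac{1}{f_{0}(y)},
\]
all induce on $\mathbb{T}$ the characteristic foliation $\ker(dy-f_{0}(y)\,dx)$ of $S_{f_{0}}$, have $\phi^{0}_{\mathrm{naive}}$ equal to the standard parametrisation of $S_{f_{0}}$, $\mathrm{Im}\,\phi^{1}_{\mathrm{naive}}=S_{f_{1}}$, and $\mathrm{Im}\,\phi^{\tau}_{\mathrm{naive}}$ equal to the graph of a function trapped pointwise between $f_{0}$ and $f_{1}$, hence contained in $\overline{\Sigma_{\varepsilon}(f_{0},f_{1})}$; well‑definedness of $h$ on $\mathbb{S}^{1}$ uses that $h'$ is odd and supported where $f_{0}\neq f_{1}$. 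This family has all the required features except one: it is not a PSI, because the shear $h$ sweeps the whole $x$–circle, so each $\mathrm{supp}\,\phi^{\tau}_{\mathrm{naive}}$ contains a full copy of $\mathbb{S}^{1}_{x}$ and cannot be covered by sets of diameter $<50\sqrt{\varepsilon}$.

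The core of the argument is to replace $\{\phi^{\tau}_{\mathrm{naive}}\}$ by a genuine $50\sqrt{\varepsilon}$–PSI with the same endpoints and the same induced foliation. I would cover $\mathbb{S}^{1}_{x}$ by two finite families $\{A^{0}_{k}\}$, $\{A^{1}_{k}\}$ of pairwise disjoint open arcs, each of length $\sim\sqrt{\varepsilon}$, the two families together covering the circle, and run the isotopy in two rounds: on $[0,\tfrac12]$ one deforms $S_{f_{0}}$ only over $\bigsqcup_{k}A^{0}_{k}\times\{|y|\le\varepsilon\}$, so that afterwards the surface already coincides with $S_{f_{1}}$ above $\bigsqcup_{k}A^{0}_{k}$; on $[\tfrac12,1]$ one deforms only over $\bigsqcup_{k}A^{1}_{k}\times\{|y|\le\varepsilon\}$ and finishes the passage to $S_{f_{1}}$. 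The subtlety is that a graph $\{r=F(x,y)\}$ whose characteristic foliation equals $\ker(dy-f_{0}\,dx)$ is forced to satisfy $F=f_{0}$; therefore over each arc the interpolating surfaces must be \emph{non‑graphical}, obtained by a local shear along the (nearly horizontal) characteristic leaves, tuned so that the characteristic foliation of $S_{f_{0}}$ is preserved throughout, the surface becomes $S_{f_{1}}$ above $A^{j}_{k}$, and the moving part stays inside a box of $x$–extent $\sim\sqrt{\varepsilon}$ and $(y,r)$–extent $O(\varepsilon)$. As $\max(\varepsilon,\sqrt{\varepsilon})=\sqrt{\varepsilon}$ for $\varepsilon<1$, each such box has diameter $O(\sqrt{\varepsilon})$, and fixing the constants gives the $50\sqrt{\varepsilon}$–PSI; this balance between the $O(\varepsilon)$ vertical thickness of $\Sigma_{\varepsilon}(f_{0},f_{1})$ and the $\sqrt{\varepsilon}$ horizontal scale of the arcs is exactly where the $\sqrt{\varepsilon}$ in the statement comes from.

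The main obstacle is this last step: reconciling the \emph{global} reparametrisation $h$ — which can be arbitrarily large — with the \emph{locality} demanded by a PSI, while never leaving the class of embeddings inducing the fixed characteristic foliation of $S_{f_{0}}$. The delicate work is designing the local foliation‑preserving corrugations over each arc and matching them across the seams $\partial A^{j}_{k}$, so that at every time $\tau$ one has a genuine embedding with the correct characteristic foliation and the $C^{0}$– and diameter bounds hold uniformly. The symmetry $y\mapsto -y$, the $O(\varepsilon)$ control from (C1), and the near‑horizontality (slope $<\varepsilon$) of the characteristic leaves are the features that make this achievable.
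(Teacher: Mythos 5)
Your overall reduction is exactly right and matches the paper: it suffices to build a $50\sqrt{\varepsilon}$-PSI $\{\varphi^\tau\}$ of embeddings of $\mathbb{T}$ with $\mathrm{Im}\,\varphi^0 = S_{f_0}$, $\mathrm{Im}\,\varphi^1 = S_{f_1}$, all inducing the same (pullback) characteristic foliation on $\mathbb{T}$, and then feed it to Proposition~\ref{PropositionPSI}. Your ``naive'' family is also a valid warm-up (one checks $g_\tau h' = (f_0 - f_1)/((1-\tau)f_1 + \tau f_0)$ and the pullback form is a multiple of $dy - f_0\,dx$), and you correctly spot that the horizontal shear $h$ sweeps the whole $x$-circle, so this family is not a PSI. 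Up to this point the proposal is sound.

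The gap is in the fix. You assert that, because a graph $\{r = F(x,y)\}$ with characteristic foliation $\ker(dy - f_0\,dx)$ must have $F = f_0$, ``the interpolating surfaces must be non-graphical,'' and you then propose (but do not construct) ``local foliation-preserving corrugations.'' This rests on a conflation of two different conditions. Proposition~\ref{PropositionPSI} does not ask the image surface $\mathrm{Im}\,\varphi^\tau$ to carry the characteristic foliation $\ker(dy - f_0\,dx)$; it only asks that the embeddings $\varphi^\tau$ pull it back to a fixed foliation on $\mathbb{T}$. These agree when $\varphi^\tau$ is a graph parametrisation, but not when it has a nontrivial reparametrisation factor. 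The paper's intermediate surfaces are in fact all graphs $\{r = \mathrm{slope}_{\mathcal{F}_\tau}(x,y)\}$ — over $\mathbb{T}$, with an $x$-dependent slope — and the missing idea is precisely how to produce them: interpolate the \emph{foliations} $\mathcal{F}_0$ and $\mathcal{F}_1$ on $\mathbb{T}$ themselves, via a cutoff $\lambda(x,\tau)$ with $|\partial_x\lambda| < 2/\sqrt{\varepsilon}$ that switches on over two families of short $x$-arcs, using a \emph{vertical} (in $y$) shear $\phi^\tau$ defined leaf-by-leaf from $\mathcal{L}^\pm_{f_1}\circ(\mathcal{L}^\pm_{f_0})^{-1}$; then set $\varphi^\tau(q) = (\phi^\tau(q), \mathrm{slope}_{\mathcal{F}_\tau}(\phi^\tau(q)))$. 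Because $\mathcal{F}_\tau = (\phi^\tau)_*\mathcal{F}_0$, the pullback is $\mathcal{F}_0$ by construction, the slope bound $|f_i| < \varepsilon$ together with $|\partial_x\lambda| < 2/\sqrt{\varepsilon}$ yields the $3\sqrt{\varepsilon}$ bound on slopes, and the two rounds of arcs give the PSI structure. Without this shift from ``localise a horizontal shear of graphs of $y$'' to ``interpolate foliations and take graphs of $x$-dependent slopes,'' I do not see how to complete your corrugation step; the smoothness of the glue across seams, the embeddedness of the intermediate non-graphical surfaces, and the diameter bound all remain unproven and would be substantially harder than the graphical route.
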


\subsection{Proof of Proposition \ref{MainProposition}}

A smooth non-singular 1-dimensional foliation $\mathcal{F}$ on the torus $\mathbb{T}(x,y)$ is \textit{admissible} if none of its leaves has a tangent line parallel to the $y$-axis. Equivalently, each point $(x,y)\in\mathbb{T}(x,y)$ has a well-defined slope $\mathrm{slope}_{\mathcal{F}}(x,y)\in\mathbb{R}$, given by the tangent line to the leaf through $(x,y)$. Every admissible foliation $\mathcal{F}$ thus defines a surface
\begin{equation*}
    S_{\mathcal{F}}:=\{(x,y,\mathrm{slope}_{\mathcal{F}}(x,y))\mid (x,y)\in\mathbb{T}\}\subset Y.
\end{equation*}
\noindent Moreover, the projection of the \textit{characteristic foliation} $(S_{\mathcal{F}})_\xi$ onto $\mathbb{T}$ coincides with $\mathcal{F}$.\\

\noindent Consider two closed curves $C_1,C_2\subset\mathbb{T}$ defined by
\begin{equation*}
    C_1=\mathbb{S}^1\times\{0\},\quad C_2=\mathbb{S}^1\times\{-1/2\}.
\end{equation*}

\noindent Note that $C_1$ and $C_2$ divide $\mathbb{T}$ into annuli:
\begin{equation*}
    A^{+}=\{0<y<1/2\},\quad A^{-}=\{-1/2<y<0\}.
\end{equation*}

\noindent We focus on an $\varepsilon$-neighbourhoods of the leaf $C_1$ and define:
\begin{equation*}
    A^+(\varepsilon)=\mathbb{S}^1\times(0,\varepsilon],\quad A^-(\varepsilon)=\mathbb{S}^1\times[-\varepsilon,0).
\end{equation*}

\begin{figure}[h]
    \centering
    \includegraphics[scale=0.9]{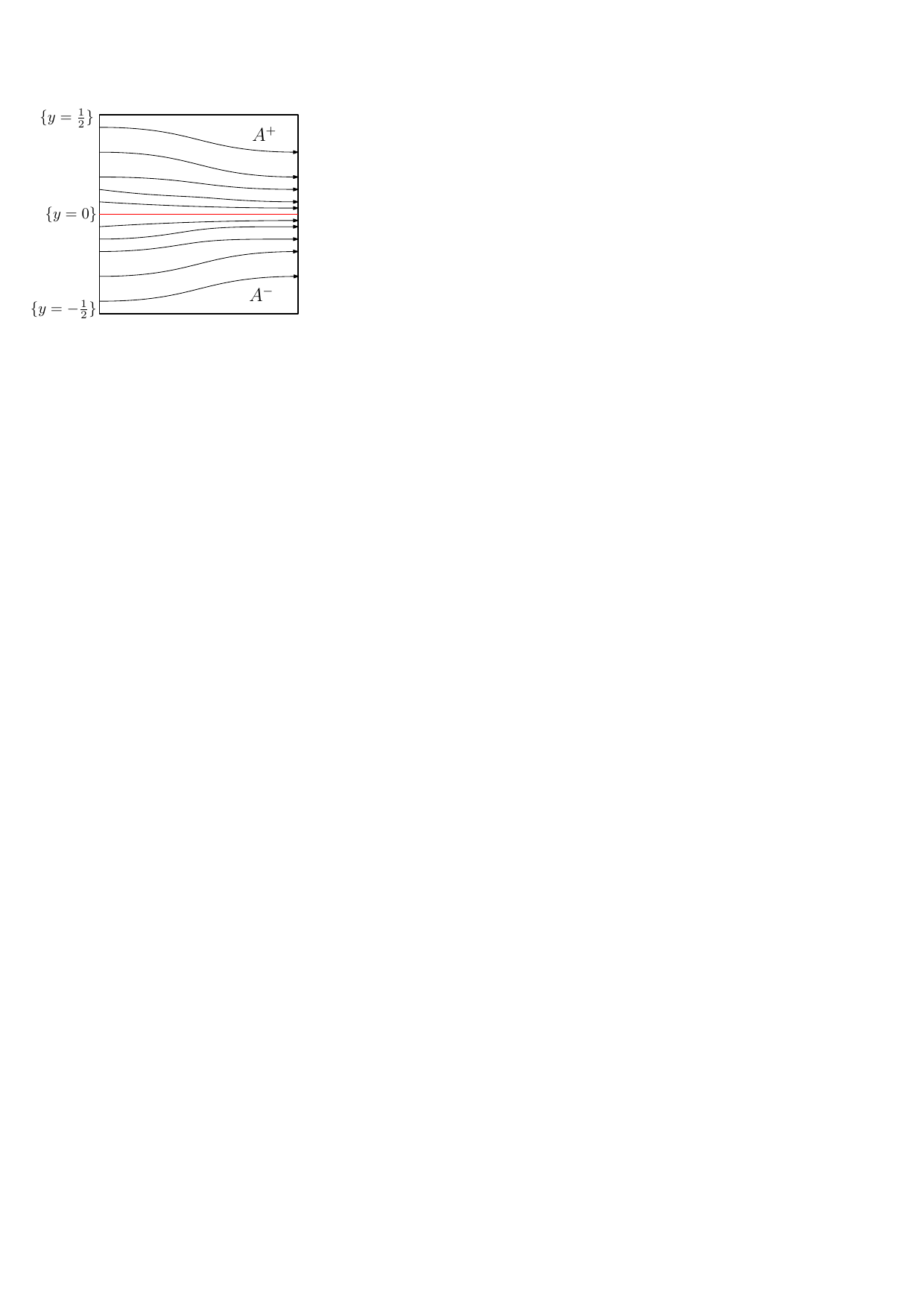}
    \caption{A foliation on the torus.}
    \label{fig:torus_foliation}
\end{figure}

\noindent For $i\in\{0,1\}$, consider the projection of the characteristic foliation $(S_{f_i})_{\xi}$ onto $A^{\pm}\subset\mathbb{T}$. The projection of each leaf intersects $\mathbb{S}^1\times\{\pm\varepsilon\}$ transversally and wraps around $A^{\pm}$, converging to $C_1$ forward in time and $C_2$ backward in time (see Figure \ref{fig:torus_foliation}). More precisely, for each $\theta\in\mathbb{S}^1$, the projection of the characteristic leaf of $(S_{f_i})_{\xi}$ passing through $(\theta,\pm\varepsilon)\in A^{\pm}$ admits a parametrization:
\begin{equation}\label{Leaf}
    \mathcal{L}^{\pm}_{f_i}:\mathbb{S}^1\times\mathbb{R}\rightarrow A^{\pm},\quad\mathcal{L}^{\pm}_{f_i}(\theta,t)=(\theta+t,y_{\pm}(t)),
\end{equation}

\noindent where $y_{\pm}(t)$ solves the ODE
\begin{equation}\label{ODE}
    y_{\pm}'(t)=f_i(y_{\pm}(t)),\quad y_{\pm}(0)=\pm\varepsilon.
\end{equation}

\noindent Note $\mathcal{L}_{f_i}^{\pm}$ is a diffeomorphism restricting to a diffeomorphism $\mathbb{S}^1\times[0,\infty)\xrightarrow{\cong} A^{+}(\varepsilon)$.\\

\noindent Let $T_0,T_1\subset\mathbb{S}^1=\mathbb{R}/\mathbb{Z}$ satisfy:
\begin{itemize}
    \item $T_0\cap T_1=\varnothing$; and $\mathbb{S}^1\setminus T_i,\,i\in\{0,1\}$ is a union of intervals shorter than $3\sqrt{\varepsilon}$;

    \item $\mathbb{S}^1\setminus (T_0\cup T_1)$ is a disjoint union of open intervals shorter than $\sqrt{\varepsilon}$.
\end{itemize}

\noindent Define open sets $\mathcal{U}_0,\mathcal{U}_1\subset\mathbb{T}$ by
\begin{equation*}
    \mathcal{U}_0=(\mathbb{S}^1\setminus T_0)\times Op([-\varepsilon,\varepsilon]),\quad 
    \mathcal{U}_1=(\mathbb{S}^1\setminus T_1)\times Op([-\varepsilon,\varepsilon]).
\end{equation*}

\noindent Let $\lambda:\mathbb{S}^1\times[0,1]\rightarrow [0,1]$ be smooth with:
\begin{itemize}
    \item $\lambda(\cdot,0)=0$ and $\lambda(\cdot,1)=1$,
    \item $\lambda(x,\tau)=0$ for $(x,\tau)\in T_0\times[0,1/2]$ and $\lambda(x,\tau)=1$ for $(x,\tau)\in T_1\times[1/2,1]$,
    \item $\big|\frac{\partial\lambda}{\partial x}(x,\tau)\big|<\frac{2}{\sqrt{\varepsilon}}$ for all $(x,\tau)\in\mathbb{S}^1\times[0,1]$.
\end{itemize}

\noindent Finally, define a $1$-parameter family $\phi^{\tau}:\mathbb{T}\rightarrow\mathbb{T}$ for $\tau\in[0,1]$ as (see Figure \ref{fig:mixed_foliation})
\begin{equation}\label{phi^t}
    \phi^{\tau}(q)=\begin{cases}
        q, & q\in C_1\cup C_2,\\
        (1-\lambda(x,\tau))\,q+\lambda(x,\tau)\,\mathcal{L}^{\pm}_{f_1}\circ(\mathcal{L}^{\pm}_{f_0})^{-1}(q), & q=(x,y)\in A^{\pm}.
    \end{cases}
\end{equation}

\begin{figure}[h]
    \centering
    \includegraphics[scale=1]{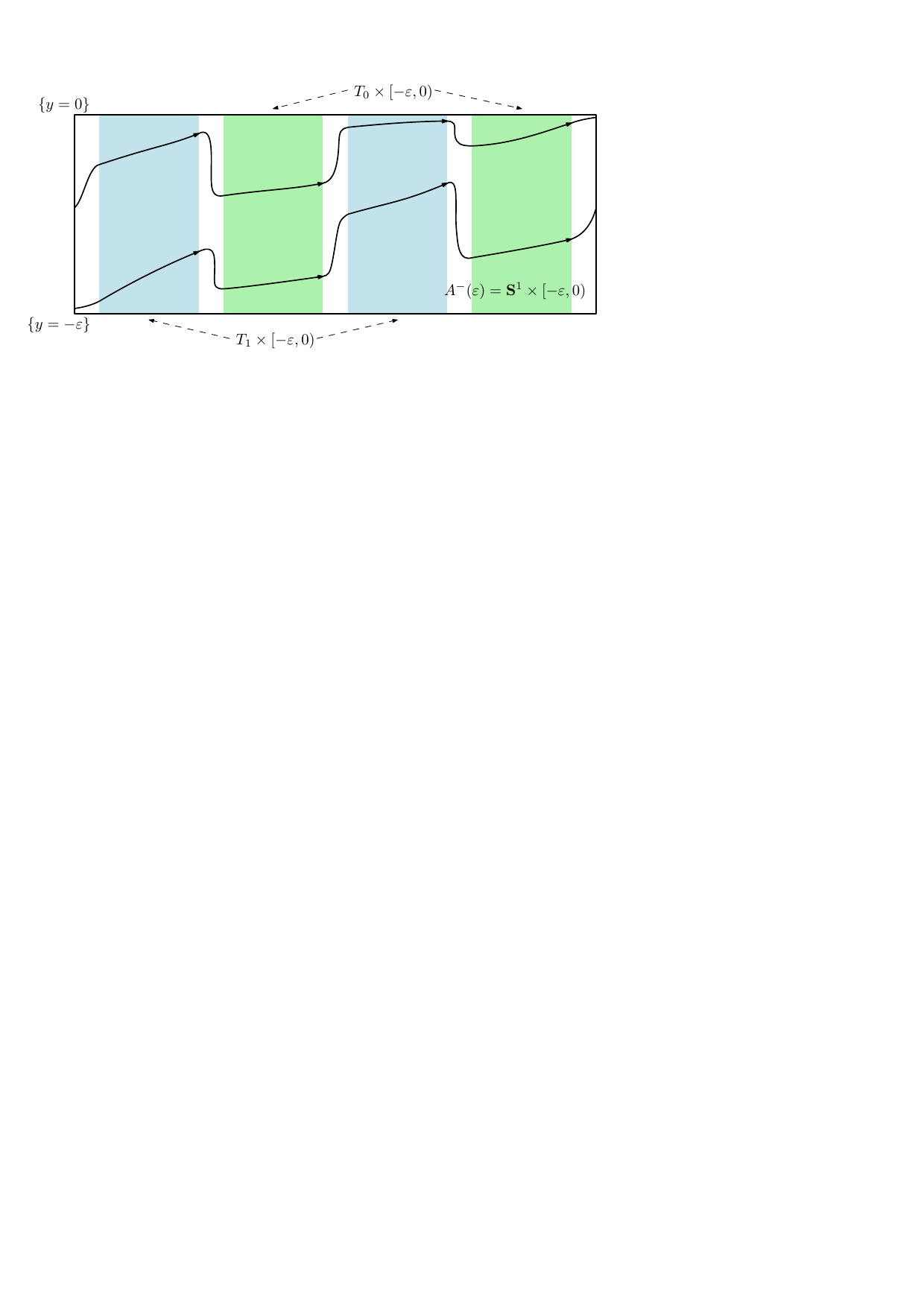}
    \caption{Characteristic foliation on $\phi^\tau(\mathbb{T})$ for $\tau=1/2$. A leaf alternates between characteristic foliation on $S_{f_0}$ and $S_{f_1}$.}
    \label{fig:mixed_foliation}
\end{figure}

\begin{claim}
    The map $\phi^{\tau}$ is a diffeomorphism for all $\tau\in[0,1]$.
\end{claim}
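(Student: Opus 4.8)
\textbf{Proof plan for the Claim that $\phi^{\tau}$ is a diffeomorphism.}

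The plan is to check that $\phi^{\tau}$ is a smooth bijection with smooth inverse by analysing it region by region, using the product structure adapted to the decomposition $\mathbb{T}=C_1\cup C_2\cup A^+\cup A^-$. Since the construction is symmetric in $A^+$ and $A^-$, I would treat $A^+$ and indicate that $A^-$ is identical. The first observation is that on each annulus $A^{\pm}$ the map $\mathcal{L}^{\pm}_{f_1}\circ(\mathcal{L}^{\pm}_{f_0})^{-1}$ is a diffeomorphism of $A^{\pm}$ onto itself: both $\mathcal{L}^{\pm}_{f_0}$ and $\mathcal{L}^{\pm}_{f_1}$ are diffeomorphisms $\mathbb{S}^1\times\mathbb{R}\to A^{\pm}$ by the discussion following \eqref{ODE}, so their composite is as well. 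Crucially, it fixes the level $\{y=\pm\varepsilon\}$ pointwise and, because it sends the point with ``angular label'' $\theta$ and flow-time $t$ on the $f_0$-foliation to the point with the same label $\theta$ and same time $t$ on the $f_1$-foliation, it preserves the $y$-ordering: it has the form $(x,y)\mapsto(x+h(y),\psi(y))$ where $\psi:(0,1/2)\to(0,1/2)$ (resp.\ on $A^-$) is an orientation-preserving diffeomorphism and $h$ is smooth. Hence $\phi^{\tau}$ restricted to $A^+$ is of the form $(x,y)\mapsto(x+\lambda(x,\tau)h(y),\,(1-\lambda(x,\tau))y+\lambda(x,\tau)\psi(y))$.

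Next I would verify smoothness across the curves $C_1$ and $C_2$. The functions $y_{\pm}(t)$ solving \eqref{ODE} approach $0$ (resp.\ $-1/2$) exponentially as $t\to\pm\infty$, because $f_i'(0)\neq0$ and $f_i'(-1/2)=f_{\infty}'(-1/2)\neq0$; the conjugating map $\mathcal{L}^{\pm}_{f_1}\circ(\mathcal{L}^{\pm}_{f_0})^{-1}$ therefore extends smoothly over $C_1$ and $C_2$ as the identity. More precisely, near $C_1$ both $f_0$ and $f_1$ equal $-y$ (case $\mathcal{C}_\varepsilon$) or $-y^3$ (case $\widetilde{\mathcal{C}}_\varepsilon$) in a common interval $(-\delta,\delta)$ by (C3) or ($\widetilde{\text{C3}}$), so the two foliations literally agree near $C_1$ and the conjugating map is the identity there; hence $\phi^{\tau}$ is the identity near $C_1$ for every $\tau$, and in particular smooth across it. Near $C_2$ the foliations need not agree, but both $y_{\pm}(t)$ converge to $-1/2$ at the same exponential rate governed by $f_{\infty}'(-1/2)$, so $h(y)$ and $\psi(y)-y$ vanish to infinite order as $y\to-1/2$; this shows $\phi^{\tau}$ is smooth across $C_2$ as well. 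Together with smoothness on the open annuli and at $C_1$, this gives that $\phi^{\tau}$ is globally smooth, and the same argument applied to the (smooth) inverse conjugating map $\mathcal{L}^{\pm}_{f_0}\circ(\mathcal{L}^{\pm}_{f_1})^{-1}$ handles the inverse once bijectivity is established.

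Finally I would prove that $\phi^{\tau}$ is a bijection. Because $\phi^{\tau}$ preserves $C_1$, $C_2$ and each annulus $A^{\pm}$ setwise, it suffices to check bijectivity on each $A^{\pm}$, and there one computes the Jacobian in the $(x,y)$ coordinates: the matrix is lower-triangular-plus-correction with $(2,2)$-entry $(1-\lambda)+\lambda\psi'(y)>0$ (since $\psi'>0$ and $0\le\lambda\le1$), while the $(1,1)$-entry is $1+\lambda_x(x,\tau)h(y)$, which is positive once $\varepsilon$ is small enough because $|\lambda_x|<2/\sqrt{\varepsilon}$ and $|h(y)|=O(\varepsilon)$ on the relevant $y$-range, so their product stays close to $1$; hence $\det D\phi^{\tau}>0$ everywhere and $\phi^{\tau}$ is a local diffeomorphism. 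For global injectivity, note that $\phi^{\tau}$ fixes the $y$-value of the two boundary circles $\{y=\pm\varepsilon\}$ of the core strip and is a degree-one map on $\mathbb{T}$ (it is homotopic to the identity through the family $\phi^{s\tau}$), so a covering-space / proper-local-diffeomorphism argument forces it to be a homeomorphism, and then the smooth inverse from the previous paragraph makes it a diffeomorphism. I expect the main obstacle to be the infinite-order vanishing of $h$ and $\psi-y$ at the circle $C_2$ — i.e.\ the careful ODE comparison showing the conjugating map extends $C^{\infty}$ across a hyperbolic closed leaf — since away from that issue everything is a routine triangular-Jacobian computation.
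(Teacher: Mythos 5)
Your proposal gets the broad architecture right (work on the annuli $A^{\pm}$ where $\mathcal{L}^{\pm}_{f_1}\circ(\mathcal{L}^{\pm}_{f_0})^{-1}$ is a diffeomorphism, then patch across $C_1$ and $C_2$), but the two key points — the form of the conjugating map and where it degenerates — are both mistaken, and this undermines exactly the step the paper treats as the substance of the claim.

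First, the conjugating map \emph{fixes} the $x$-coordinate. From $\mathcal{L}^{\pm}_{f_i}(\theta,t)=(\theta+t,y_i(t))$ one solves $(\mathcal{L}^{\pm}_{f_0})^{-1}(x,y)=(x-t,t)$ with $t=y_0^{-1}(y)$, and then $\mathcal{L}^{\pm}_{f_1}$ returns $(x,\,y_1(y_0^{-1}(y)))$. So $h\equiv 0$ and the map is $(x,y)\mapsto(x,\psi(y))$. Your Jacobian discussion with an off-diagonal correction and a smallness argument on $1+\lambda_x h$ is moot: $\phi^{\tau}$ preserves each vertical circle $\{x\}\times\mathbb{S}^1$, its differential is genuinely lower triangular, and the determinant is just $(1-\lambda)+\lambda\psi'(y)>0$.

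Second — and this is the genuine gap — you have the identity region backwards. You assert that since $f_0=f_1$ near $y=0$, ``the conjugating map is the identity there; hence $\phi^{\tau}$ is the identity near $C_1$.'' This is false. The parametrizations $\mathcal{L}^{\pm}_{f_i}$ are normalized at $y=\pm\varepsilon$, and between $\pm\varepsilon$ and $\pm\delta$ the two flows $y_0,y_1$ separate, so by the time they enter the common linear (or cubic) regime they carry different constants $c_0\neq c_1$. The resulting $\psi$ near $C_1$ is $\psi(y)=(c_1/c_0)y$ in the $\mathcal{C}_{\varepsilon}$ case and $\psi(y)=y/\sqrt{(c_1-c_0)y^2+1}$ in the $\widetilde{\mathcal{C}}_{\varepsilon}$ case — \emph{not} the identity. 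Smoothness across $C_1$ is therefore not automatic and must be checked by writing down these formulas, which is precisely what the paper's proof does. Conversely, it is near $C_2$ (indeed on all of $\{|y|\geq\varepsilon\}$) that the conjugating map is literally the identity, since $f_0=f_1=f_{\infty}$ there and the two ODE solutions coincide for $t\leq0$; your paragraph about matching exponential rates and infinite-order vanishing at $C_2$ is an elaborate argument for something that holds trivially. The net effect is that the one place where smoothness requires a real argument — the core circle $C_1$ — is dismissed on incorrect grounds in your proposal, so the proof as written does not go through.
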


\begin{proof}
Since $\mathcal{L}^{\pm}_{f_1}\circ(\mathcal{L}^{\pm}_{f_0})^{-1}$ is a diffeomorphism of $A^{\pm}$ preserving the $x$-coordinate, it follows that $\phi^{\tau}|_{A^{\pm}}$ is a diffeomorphism. Also, $\phi^{\tau}(q) = q$ outside $A^{+}(\varepsilon)\cup A^{-}(\varepsilon)$, so $\phi^{\tau}|_{\mathbb{T} \setminus C_1}$ is a diffeomorphism onto its image. It remains to verify smoothness along $C_1$. By construction,
\[
\phi^{\tau}(\{\theta\}\times \mathbb{S}^1) = \{\theta\} \times \mathbb{S}^1,\quad \forall\theta\in\mathbb{S}^1.
\]
\noindent To describe $\phi^{\tau}$ near $C_1$, we use (\ref{ODE}) and the fact that $f_0(y)=f_1(y) = -y$ or $-y^3$ for $|y|\leq\delta$ (by (C3), ($\widetilde{C3}$)). We distinguish two cases:

\begin{enumerate}
    \item[\textbf{(1)}] If $f_0(y)=f_1(y) = -y$ on $|y| < \delta$, then for some constants $c_0, c_1 > 0$, the leaves are
    \[
    \mathcal{L}^{+}_{f_i}(\theta, t) = (\theta + t,\, c_i e^{-t}),\quad \mathcal{L}^{-}_{f_i}(\theta, t) = (\theta + t,\,-c_i e^{-t}).
    \]
    Substituting into (\ref{phi^t}), we obtain
    \[
    \phi^{\tau}(x, y) = \left(x,\, \left(1 + \frac{c_1 - c_0}{c_0}\lambda(x, \tau)\right)y\right).
    \]

    \item[\textbf{(2)}] If $f_0(y)=f_1(y) = -y^3$ on $|y| < \delta$, then for some constants $c_i\in\mathbb{R}$, the leaves are
    \[
    \mathcal{L}^{+}_{f_i}(\theta, t) = \left(\theta + t,\, \frac{1}{\sqrt{c_i + 2t}}\right),\quad
    \mathcal{L}^{-}_{f_i}(\theta, t) = \left(\theta + t,\,-\frac{1}{\sqrt{c_i + 2t}}\right).
    \]
    This yields
    \[
    \phi^{\tau}(x, y) = \left(x,\, (1 - \lambda(x, \tau))y + \lambda(x, \tau)\cdot \frac{y}{\sqrt{(c_1 - c_0)y^2 + 1}}\right).
    \]
\end{enumerate}

\noindent In both cases, the map is a smooth diffeomorphism near $y = 0$.
\end{proof}

\noindent Let $\mathcal{F}_0$ be the 1-dimensional foliation on $\mathbb{T}$ given as the $\mathbb{T}$-projection of the characteristic foliation on $S_{f_0}$. For $\tau\in[0,1]$, define a new foliation $\mathcal{F}_\tau$ by
\begin{equation}\label{F_tau}
    \mathcal{F}_{\tau} := \{\phi^{\tau}(L)\}_{L\in\mathcal{F}_0}.
\end{equation}

\noindent The foliations $\mathcal{F}_\tau$ and the maps $\phi^{\tau}:\mathbb{T} \to \mathbb{T}$ satisfy:
\begin{enumerate}
    \item $S_{\mathcal{F}_0} = S_{f_0}$ and $S_{\mathcal{F}_1} = S_{f_1}$;
    \item $\mathrm{supp}\,\phi^{\tau} \subset \mathcal{U}_0$ for $\tau \in [0,1/2]$, and $\mathrm{supp}\,\phi^{\tau} \subset \mathcal{U}_1$ for $\tau \in [1/2,1]$;
    \item the slope of any leaf in $\mathcal{F}_\tau|_{\mathbb{S}^1\times[-\varepsilon,\varepsilon]}$ is bounded by $3\sqrt{\varepsilon}$.
\end{enumerate}

\noindent The first two properties are immediate, so we verify the third one. Since $\mathcal{F}_\tau$ is obtained by applying $\phi^\tau$ to leaves of $\mathcal{F}_0$, it suffices to estimate
\[
\left|\,\mathrm{slope}\left\{t\mapsto \phi^\tau\left(\mathcal{L}^{\pm}_{f_0}(\theta,t)\right)\right\}\,\right| < 3\sqrt{\varepsilon}.
\]

\noindent From (\ref{phi^t}),
\[
\phi^\tau(\mathcal{L}^{\pm}_{f_0}(\theta,t)) = \left(\theta + t,\,(1 - \lambda(t+\theta,\tau))y_0(t) + \lambda(t+\theta,\tau)y_1(t)\right),
\]
where $y_0$, $y_1$ solve the ODE (\ref{ODE}). Therefore, the absolute value of the slope is
\begin{equation*}
\begin{aligned}
\Big|(1-\lambda(t+\theta,\tau))&\cdot y_0'(t)+\lambda(t+\theta,\tau)\cdot y_1'(t)+(y_1(t)-y_0(t))\frac{d}{dt}\lambda(t+\theta,\tau)\Big| \\
&\leq \max\{|y_0'(t)|,|y_1'(t)|\}+|y_1(t)-y_0(t)|\cdot\max_{(x,\tau)\in\mathbb{S}^1\times[0,1]}\Big\lvert\frac{\partial\lambda}{\partial x}(x,\tau)\Big\rvert  \quad \\
&\leq \max_{y\in[-\varepsilon,\varepsilon]}\max\{|f_0(y)|,|f_1(y)|\}+\varepsilon\cdot\frac{2}{\sqrt{\varepsilon}}<3\sqrt{\varepsilon}.
\end{aligned}
\end{equation*}

\noindent Finally, define the isotopy of embeddings $\varphi^{\tau}:\mathbb{T} \to (\mathbb{T} \times \mathbb{R},\xi)$ by
\begin{equation*}
    \varphi^\tau(q) := \left(\phi^\tau(q),\, \mathrm{slope}_{\mathcal{F}_\tau}(\phi^\tau(q))\right).
\end{equation*}

\noindent From (\ref{F_tau}), the $\mathbb{T}$-projection of the characteristic foliation on $\mathrm{Im}\,\varphi^\tau = S_{\mathcal{F}_\tau}$ is precisely $\mathcal{F}_\tau = \{\phi^\tau(L)\}_{L\in\mathcal{F}_0}$. In particular, each $\varphi^\tau$ pulls back the characteristic foliation to $\mathcal{F}_0$:
\[
(\varphi^\tau)^*\left((\mathrm{Im}\,\varphi^\tau)_\xi\right) = \mathcal{F}_0.
\]

\noindent Moreover, since $||\mathrm{slope}_{\mathcal{F}_\tau} - \mathrm{slope}_{\mathcal{F}_0}||_\infty < 3\sqrt{\varepsilon}$, we have
\begin{equation}\label{ConditionPSI}
    \mathrm{supp}_{\tau\in[0,1/2]}\varphi^\tau \subset \mathcal{U}_0 \times (-3\sqrt{\varepsilon}, 3\sqrt{\varepsilon}),\quad
    \mathrm{supp}_{\tau\in[1/2,1]}\varphi^\tau \subset \mathcal{U}_1 \times (-3\sqrt{\varepsilon}, 3\sqrt{\varepsilon}).
\end{equation}

\noindent Therefore, $\varphi^\tau$ is a $(50\sqrt{\varepsilon})$-PSI, and by Proposition~\ref{PropositionPSI}, we obtain the desired isotopy $\Phi^t$ and final embedding $\Phi = \Phi^1$.

\end{document}